\documentclass[a4paper,twoside]{article}
\usepackage{a4}
\usepackage{amssymb}
\usepackage{amsmath}
\usepackage{upref}
\usepackage{bbm}
\usepackage{enumerate}
\usepackage[active]{srcltx}
\usepackage[dvips,colorlinks,citecolor=blue,linkcolor=blue]{hyperref}
\usepackage[dvipsnames]{color}
%
%
%

\newcommand{\ds}{\displaystyle}
%
%
%
%
%
\newcount\minutes \newcount\hours
\hours=\time
\divide\hours 60
\minutes=\hours
\multiply\minutes -60
\advance\minutes \time
\newcommand{\klockan}{\the\hours:{\ifnum\minutes<10 0\fi}\the\minutes}
\newcommand{\tid}{\today\ \klockan}
\newcommand{\prtid}{\smash{\raise 10mm \hbox{\LaTeX ed \tid}}}
%
%
\makeatletter
\pagestyle{headings}
\headheight 10pt
\def\sectionmark#1{} 
\def\subsectionmark#1{}
\newcommand{\sectnr}{\ifnum \c@secnumdepth >\z@
                 \thesection.\hskip 1em\relax \fi}
\def\@evenhead{\footnotesize\rm\thepage\hfil\leftmark\hfil\llap{\prtid}}
\def\@oddhead{\footnotesize\rm\rlap{\prtid}\hfil\rightmark\hfil\thepage}
\def\tableofcontents{\section*{Contents} 
 \@starttoc{toc}}
\makeatother
%
%
\makeatletter
\def\@biblabel#1{#1.}
\makeatother
%
%
%
\makeatletter
\let\Thebibliography=\thebibliography
\renewcommand{\thebibliography}[1]{\def\@mkboth##1##2{}\Thebibliography{#1}
\addcontentsline{toc}{section}{References}
\frenchspacing 
\setlength{\@topsep}{0pt}
\setlength{\itemsep}{0pt}%
\setlength{\parskip}{0pt plus 2pt}%
}
\makeatother
%
%
\makeatletter
\def\mdots@{\mathinner.\nonscript\!.%
 \ifx\next,.\else\ifx\next;.\else\ifx\next..\else
 \nonscript\!\mathinner.\fi\fi\fi}
\let\ldots\mdots@
\let\cdots\mdots@
\let\dotso\mdots@
\let\dotsb\mdots@
\let\dotsm\mdots@
\let\dotsc\mdots@
\def\vdots{\vbox{\baselineskip2.8\p@ \lineskiplimit\z@
    \kern6\p@\hbox{.}\hbox{.}\hbox{.}\kern3\p@}}
\def\ddots{\mathinner{\mkern1mu\raise8.6\p@\vbox{\kern7\p@\hbox{.}}%
    \raise5.8\p@\hbox{.}\raise3\p@\hbox{.}\mkern1mu}}
\makeatother
%
%
\makeatletter
\let\Enumerate=\enumerate
\renewcommand{\enumerate}{\Enumerate%
\setlength{\@topsep}{0pt}
\setlength{\itemsep}{0pt}%
\setlength{\parskip}{0pt plus 1pt}%
\renewcommand{\theenumi}{\textup{(\alph{enumi})}}%
\renewcommand{\labelenumi}{\theenumi}%
}
\let\endEnumerate=\endenumerate
\renewcommand{\endenumerate}{\endEnumerate\unskip}
\makeatother
%
%
\makeatletter
\def\@seccntformat#1{\csname the#1\endcsname.\quad}
\makeatother
%
%

%
%

\newcommand{\art}[6]{{\sc #1, \rm #2, \it #3 \bf #4 \rm (#5), \mbox{#6}.}}

\newcommand{\book}[3]{{\sc #1, \it #2, \rm #3.}}
\newcommand{\AND}{{\rm and }}
%
%
\RequirePackage{amsthm}
\newtheoremstyle{descriptive}%
  {\topsep}   
  {\topsep}   
  {\rmfamily} 
  {}          
  {\bfseries} 
  {.}         
  { }         
  {}          
\newtheoremstyle{propositional}%
  {\topsep}   
  {\topsep}   
  {\itshape}  
  {}          
  {\bfseries} 
  {.}         
  { }         
  {}          
\newtheoremstyle{remarkstyle}%
  {\topsep}   
  {\topsep}   
  {\rmfamily}  
  {}          
  {\itshape} 
  {.}         
  { }         
  {}          
\theoremstyle{propositional}
\newtheorem{thm}{Theorem}[section]
\newtheorem{prop}[thm]{Proposition}
\newtheorem{lem}[thm]{Lemma}
\newtheorem{cor}[thm]{Corollary}
\theoremstyle{descriptive}
\newtheorem{defn}[thm]{Definition}
\newtheorem{Assumption}[thm]{Assumption}
\newtheorem{egs}[thm]{Example}

\newtheorem{rem}[thm]{Remark}
%
%
%
%
%
\makeatletter
\renewenvironment{proof}[1][\proofname]{\par
  \pushQED{\qed}%
  \normalfont 
  \trivlist
  \item[\hskip\labelsep
        \itshape
    #1\@addpunct{.}]\ignorespaces
}{%
  \popQED\endtrivlist\@endpefalse
}
\makeatother
%
%
%
%
%
%
%
%
\newdimen\extrawidth
\def\iintlim#1#2{\setbox0\hbox{$\scriptstyle#1$}%
        \setbox1\hbox{$\scriptstyle#2$}%
        \extrawidth=\wd1 \advance\extrawidth-\wd0
        \ifdim\extrawidth<0pt \extrawidth=0pt\fi%
        \int_{#1\kern\extrawidth \kern .5em}^{#2\kern -\wd1} \kern -.5em%
}
%
%
\newcommand{\cprime}{{\mathsurround0pt$'$}}
%
%

\renewcommand{\emptyset}{\varnothing}
%
%
%
%
%
\def\vint{\mathop{\mathchoice%
          {\setbox0\hbox{$\displaystyle\intop$}\kern 0.22\wd0%
           \vcenter{\hrule width 0.6\wd0}\kern -0.82\wd0}%
          {\setbox0\hbox{$\textstyle\intop$}\kern 0.2\wd0%
           \vcenter{\hrule width 0.6\wd0}\kern -0.8\wd0}%
          {\setbox0\hbox{$\scriptstyle\intop$}\kern 0.2\wd0%
           \vcenter{\hrule width 0.6\wd0}\kern -0.8\wd0}%
          {\setbox0\hbox{$\scriptscriptstyle\intop$}\kern 0.2\wd0%
           \vcenter{\hrule width 0.6\wd0}\kern -0.8\wd0}}%
          \mathopen{}\int}
\def\vintslides{\mathop{\mathchoice%
          {\setbox0\hbox{$\displaystyle\intop$}\kern 0.22\wd0%
           \vcenter{\hrule height 0.04em width 0.6\wd0}\kern -0.82\wd0}%
          {\setbox0\hbox{$\textstyle\intop$}\kern 0.2\wd0%
           \vcenter{\hrule height 0.04em width 0.6\wd0}\kern -0.8\wd0}%
          {\setbox0\hbox{$\scriptstyle\intop$}\kern 0.2\wd0%
           \vcenter{\hrule height 0.04em width 0.6\wd0}\kern -0.8\wd0}%
          {\setbox0\hbox{$\scriptscriptstyle\intop$}\kern 0.2\wd0%
           \vcenter{\hrule height 0.04em width 0.6\wd0}\kern -0.8\wd0}}%
          \mathopen{}\int}
%
%

\DeclareMathOperator{\dist}{dist}

\newcommand{\dinn}{d_{\text{inn}}}

\DeclareMathOperator{\spt}{supp}
\newcommand{\supp}{\spt}


\DeclareMathOperator*{\essliminf}{ess\,lim\,inf}

\newcommand{\bdry}{\partial}
\newcommand{\bdy}{\bdry}

{\catcode`p =12 \catcode`t =12 \gdef\eeaa#1pt{#1}}      
\def\accentadjtext#1{\setbox0\hbox{$#1$}\kern   
                \expandafter\eeaa\the\fontdimen1\textfont1 \ht0 }
\def\accentadjscript#1{\setbox0\hbox{$#1$}\kern 
                \expandafter\eeaa\the\fontdimen1\scriptfont1 \ht0 }
\def\accentadjscriptscript#1{\setbox0\hbox{$#1$}\kern   
                \expandafter\eeaa\the\fontdimen1\scriptscriptfont1 \ht0 }
\def\accentadjtextback#1{\setbox0\hbox{$#1$}\kern       
                -\expandafter\eeaa\the\fontdimen1\textfont1 \ht0 }
\def\accentadjscriptback#1{\setbox0\hbox{$#1$}\kern     
                -\expandafter\eeaa\the\fontdimen1\scriptfont1 \ht0 }
\def\accentadjscriptscriptback#1{\setbox0\hbox{$#1$}\kern 
                -\expandafter\eeaa\the\fontdimen1\scriptscriptfont1 \ht0 }
\def\itoverline#1{{\mathsurround0pt\mathchoice
        {\rlap{$\accentadjtext{\displaystyle #1}
                \accentadjtext{\vrule height1.593pt}
                \overline{\phantom{\displaystyle #1}
                \accentadjtextback{\displaystyle #1}}$}{#1}}
        {\rlap{$\accentadjtext{\textstyle #1}
                \accentadjtext{\vrule height1.593pt}
                \overline{\phantom{\textstyle #1}
                \accentadjtextback{\textstyle #1}}$}{#1}}
        {\rlap{$\accentadjscript{\scriptstyle #1}
                \accentadjscript{\vrule height1.593pt}
                \overline{\phantom{\scriptstyle #1}
                \accentadjscriptback{\scriptstyle #1}}$}{#1}}
        {\rlap{$\accentadjscriptscript{\scriptscriptstyle #1}
                \accentadjscriptscript{\vrule height1.593pt}
                \overline{\phantom{\scriptscriptstyle #1}
                \accentadjscriptscriptback{\scriptscriptstyle #1}}$}{#1}}}}
\def\itunderline#1{{\mathsurround0pt\mathchoice
        {\rlap{$\underline{\phantom{\displaystyle #1}
                \accentadjtextback{\displaystyle #1}}$}{#1}}
        {\rlap{$\underline{\phantom{\textstyle #1}
                \accentadjtextback{\textstyle #1}}$}{#1}}
        {\rlap{$\underline{\phantom{\scriptstyle #1}
                \accentadjscriptback{\scriptstyle #1}}$}{#1}}
        {\rlap{$\underline{\phantom{\scriptscriptstyle #1}
                \accentadjscriptscriptback{\scriptscriptstyle #1}}$}{#1}}}}
%
%

\newcommand{\be}{\beta}

\newcommand{\eps}{\varepsilon} 
\newcommand{\ga}{\gamma}

\newcommand{\Om}{\Omega}
\renewcommand{\phi}{\varphi}
\newcommand{\p}{{$p\mspace{1mu}$}}   
\newcommand{\R}{\mathbb{R}}

\newcommand{\N}{\mathbb{N}}
\newcommand{\eR}{{\overline{\R}}}

\newcommand{\sm}{\setminus}

%
%
%
%
%

\newcommand{\limminus}{{\mathchoice{\raise.17ex\hbox{$\scriptstyle -$}}
                {\raise.17ex\hbox{$\scriptstyle -$}}
                {\raise.1ex\hbox{$\scriptscriptstyle -$}}
                {\scriptscriptstyle -}}}
\newcommand{\limplus}{{\mathchoice{\raise.17ex\hbox{$\scriptstyle +$}}
                {\raise.17ex\hbox{$\scriptstyle +$}}
                {\raise.1ex\hbox{$\scriptscriptstyle +$}}
                {\scriptscriptstyle +}}}
%
%
\newcommand{\Np}{N^{1,p}}

\newcommand{\OmPC}{\overline{\Om}^P}

%
%
%
%
%
%
%
%
%
%

\newcommand{\peto}{\stackrel{\OmPC}{\rightarrow}}

\newcommand{\CpP}{\itoverline{C}_p^P}

\makeatletter
\newcommand{\setcurrentlabel}[1]{\def\@currentlabel{#1}}
\makeatother

%
%
\numberwithin{equation}{section}

\begin{document}

\title{Geometry of prime end boundary and the Dirichlet problem for bounded domains in metric measure spaces}
\author{
Dewey Estep and Nageswari Shanmugalingam \footnote{N.S.~was partially supported by NSF grant~\#DMS-1200915.
Part of the work was conducted during the stay of the authors at the Institute of Pure and Applied 
Mathematics (IPAM); the authors thank that institution
for its kind hospitality. The authors also wish to thank the anonymous referee for helpful suggestions that improved the 
exposition of the paper.}
}

\maketitle

\noindent{\small
{\bf Abstract}. In this note we study the Dirichlet problem associated with a version of prime end boundary of a 
bounded domain in a complete metric measure space equipped with a doubling measure supporting a Poincar\'e 
inequality. We show the resolutivity of functions that are continuous on the prime end boundary and are 
Lipschitz regular when restricted to the subset of all prime ends whose impressions are singleton sets. We also
consider a new notion of capacity adapted to the prime end boundary, and show that bounded perturbations of
such functions on subsets of the prime end boundary with zero capacity are resolutive and that their Perron solutions
coincide with the Perron solution of the original functions. We also describe some examples which demonstrate
the efficacy of the prime end boundary approach in obtaining new results even for the classical Dirichlet problem for
some Euclidean domains.} 

\bigskip
\noindent
{\small \emph{Key words and phrases}: 
Prime end boundary, Dirichlet problem, $p$-harmonic functions, Perron method, metric measure spaces, doubling measure,
Poincar\'e inequality.}

\medskip
\noindent
{\small Mathematics Subject Classification (2010): 
Primary: 31E05; Secondary: 31B15, 31B25, 31C15, 30L99.
}

\section{Introduction} 

 The Dirichlet problem associated with a partial differential operator $L$ on a domain $\Om$ is the
 problem of finding a function $u$ on $\Om$ such that $Lu=0$ (usually in a weak sense) on $\Om$ 
 and $u-f\in W^{1,p}_0(\Om)$ for a given boundary data $f:\bdy\Om\to\R$. However, in some situations
 $\bdy\Om$ is not the correct boundary to be considered. For example, given a flat metal disc,
 if we cut a radial slit in the disc and insert a non-conducting material in the slit, then heat energy cannot
 pass from one side of the slit to the other directly, and so in this case the correct boundary for the 
 slit disc (when the operator $L$ is the one associated with the heat equation) should count each
 point on the slit twice, once for each side of the slit. For more complicated domains the corresponding 
 natural boundary is more complicated. To address this issue, the paper~\cite{ABBS} proposed an
 alternative for the topological boundary $\bdy\Om$, called the prime end boundary. The goal of 
 this note is to use the prime end boundary in the study of the Dirichlet problem. 
 
 In this note we consider a variational analog of the $p$-Laplacian $\Delta_p$ in the setting of bounded domains
 in complete metric measure spaces equipped with a doubling measure supporting a \p-Poincar\'e inequality.
 We use the Perron method to construct solutions to the Dirichlet problem on bounded domains in such metric
 spaces. 
 
 The Perron method was successfully used in~\cite{BBSPer} to construct solutions to the 
 Dirichlet problem in the metric setting when the boundary considered is the topological boundary. We demonstrate
 in this paper that such a method also works for the prime end boundary. The paper~\cite{BBS}
 considered the Dirichlet problem for the prime end boundary in the simple situation that each prime end
 has only one point in its impression (see Section~2 for the definitions of these concepts) and that the 
 prime end boundary is compact. However, in general the prime end boundary is not compact, as even the 
 simple example of the harmonic comb shows (see the examples in Section~8). Hence the 
 principal part of the work of this note is to overcome the non-compactness issue of the prime end boundary
 in applying the Perron method. 
 
 The standard assumption in this paper is that the metric space under study is a complete metric space 
 equipped with a doubling measure supporting a \p-Poincar\'e inequality for some fixed $1<p<\infty$. We 
 use the Newtonian spaces as substitutes for Sobolev spaces under this assumption quite successfully, but
 we point out that an alternate construction of Sobolev-type spaces has been used by others
 successfully in some fractal spaces that do not satisfy the Poincar\'e inequality considered here; see
 for example~\cite{DSV} and the references therein. The paper~\cite{LP} considers the Dirichlet problem
 associated with the $p$-Laplacian on domains in metric measure spaces that satisfy the doubling and 
 Poincar\'e inequality assumptions; however, the boundary that paper studies is the $p$-Royden algebra boundary,
 which is a functional analytic construct. The notion of prime end boundary we consider in this paper is more
 of a geometric construct. 
 
 The structure of this paper is as follows. In Section~2 we explain the notation used in this paper, in particular
 we explain the construction of the prime end boundary. One should keep in mind that even in the setting of 
 simply connected planar domains the prime end boundary described in Section~2 could differ from that of
 Carath\'eodory (see~\cite{ABBS}), but it has the advantage of being usable for non-simply connected planar domains
 and more general domains in higher dimensional Euclidean spaces as well as domains in metric measure spaces.
 In Section~3 we continue the explanation of concepts used by describing the analog of Sobolev spaces in the
 metric setting, called the Newtonian spaces, and by describing the relevant associated potential theory. 
 In Section~4 we explore some structures associated with the prime end boundaries, and in Assumption~\ref{St-Assume}
 we give a natural condition on the domain needed in the rest of the paper. Many domains whose prime end
 boundaries are not compact do satisfy this condition (see the examples in Section~8 for a sampling), but
 we do not know of any domain that would violate this condition. 
 
 In Section~5 we gather some additional
 properties of prime end boundaries of domains that satisfy  Assumption~\ref{St-Assume}, including
 the key property that if the boundary of 
 a connected open subset of $\Om$ intersects the topological boundary of $\Om$, then it must,
 under the prime end closure topology, intersect the prime end boundary of $\Om$; see
 Theorem~\ref{main1}. In Section~6 we propose a
 modification of the $p$-capacity used in~\cite{BBS}, adapted to the prime end boundary, and study its basic
 capacitary properties. We also show in this section that functions in the Newtonian class of the domain 
 with zero boundary values (denoted
 $N^{1,p}_0(\Om)$) are quasicontinuous with respect to this new capacity. In Section~7 we use the above 
 notions together with the Perron method, adapted to the prime end boundary, to obtain resolutivity properties
 of certain continuous functions on the prime end boundary of $\Om$. We also show stability of the Perron
 solution under bounded perturbation of these functions on sets of (new) capacity zero. Finally, in
 Section~8 we describe three examples and use them to show how, even in the Euclidean setting, new
 stability results for the classical Dirichlet problem can be obtained from the prime end boundary approach.

\section{Preliminaries: the prime end boundary}
\label{sect-prelim}

In this paper we assume that $(X,d)$ is a complete, doubling metric space that is quasiconvex. 
Recall that $X$ is quasiconvex if there is a constant $C_q\ge 1$ such that whenever $x,y\in X$, there is
a rectifiable curve (that is, a curve of finite length) $\gamma$ with end points $x$ and $y$ such that
the length of $\gamma$, denoted $\ell(\gamma)$, is at most $C_q\, d(x,y)$.
Quasiconvexity is a consequence of the validity of a $p$-Poincar\'e inequality on the metric measure space
$(X,d,\mu)$ when $\mu$ is doubling, and from Section~6 onward we will assume that $\mu$ is doubling
and supports a $p$-Poincar\'e inequality. So the assumption of quasiconvexity here is not 
overly constrictive. Furthermore, complete doubling metric spaces have a highly useful topological property
called properness. A metric space is proper if closed and bounded subsets of the space are compact. This property
will enable us to apply the Arzela-Ascoli theorem in subsequent sections of this paper. To see that 
a complete metric space $X$ equipped with a doubling measure is proper, we may argue as follows. 
Since $X$ is equipped with a doubling measure, it is doubling in the sense of~\cite[Section~10.13]{Hei}, 
see~\cite[page~82]{Hei}. It follows that closed balls in such a space $X$ are complete and totally
bounded, and so are compact; see~\cite[page~275, Theorem~3.1]{Munk}.

We essentially follow~\cite{ABBS} in the construction of prime ends for bounded domains in $X$. In what
follows, $\Om\subset X$ is a bounded open connected set.

In addition to the standard metric balls $B(x,r):=\{y\in X\, :\,  d(x,y)<r\}$, we will also make use of the 
\emph{$r$-neighborhood} of a set, defined as 
\begin{equation}\label{e2.1}
    N(A,r):=\bigcup_{x\in A}B(x,r).
\end{equation}

We will also use the notion of the distance from a point to a set and distance between two
sets: 
\[
\text{dist}(x,A):=\inf\{d(x,y)\, :\, y\in A\}, 
   \ \ \ \text{dist}(A,B):=\inf\{d(x,y)\, :\, x\in A, y\in B\}.
\]

Since $X$ is quasiconvex, 
it is easy to see
by a topological argument that
an open connected  subset of $X$ is rectifiably connected. 
A proof of this appears in~\cite[Lemma~4.38]{BBbook}, but since the proof is elementary, we also give a proof here.
Indeed, given an open connected subset $U$ of $X$ and $x\in U$, consider the collection $U(x)$ 
of all points $y\in U$ such that 
there is a rectifiable curve in $U$ connecting $x$ to $y$. Quasiconvexity of $X$ implies that whenever $y\in U(x)$,
there is a ball centered at $y$ contained in $U(x)$. Thus $U(x)$ is an open subset of $U$. Similar argument gives
$U\setminus U(x)$ is also open, and since $U$ is connected, this means that either $U(x)$ is empty or $U(x)=U$.
Because $x\in U(x)$, it follows that $U(x)=U$, and so $U$ is rectifiably connected.  

\begin{defn}\label{defn:dinn}
 Given a set $U\subset X$, the \emph{inner distance} on $U$ is given  for $x,y\in U$ by 
\[
  \dinn^U(x,y)=\inf_\ga \ell(\ga),
\]
where the infimum is taken over rectifiable curves $\ga$ in $U$ with end points $x,y$.
\end{defn}
If $U$ is not connected and $x,y$ belong to different components of $U$, then we have $\dinn^U(x,y)=\infty$.
However, if $U$ is a connected open subset of $X$, then, by the comments before the above definition, we 
know that $\dinn^U$ is a metric on $U$. Given that $X$ is complete and proper, an application of the 
Arzel\`{a}-Ascoli theorem tells us that if $\dinn^U(x,y)$ is finite, then there is a $\dinn^U$-geodesic $\ga_{x,y}^U$
connecting $x$ to $y$ in $\overline{U}$ with length $\ell(\ga_{x,y}^U)=\dinn^U(x,y)$. 
Here, by a $\dinn^U$-geodesic we mean a curve in $\overline{U}$ connecting $x$ to $y$ that appears
as a uniform limit of a sequence of length-minimzing curves in $U$ connecting $x$ to $y$.
Furthermore, the 
quasiconvexity of $X$ implies that, if $U$ is open, then, for each $x\in U$ with $r=\dist(x,X\setminus U)/C_q$,
the two metrics $d$ and $\dinn^U$ are biLipschitz equivalent on $B(x,r)$ with biLipschitz constant $C_q$.

We will 
make use of the Mazurkiewicz distance, defined below.

\begin{defn}\label{dm}
Let $\Om$ be a bounded connected open subset of $X$, that is, $\Om$ is a bounded domain.
Given $x,y\in\Om$, the \emph{Mazurkiewicz distance $d_M$ between $x$ and $y$ on $\Om$} is
\[
d_M(x,y)=\inf_E\, \text{diam }E,
\]
where the infimum is taken over all connected sets $E\subset\Om$ with $x,y\in E$.
\end{defn}

It is clear that $d_M$ is a metric on $\Om$, with $d(x,y)\leq d_M(x,y)\leq \dinn^\Om(x,y)$. 
The completion of $\Om$ under $d_M$ 
is denoted $\overline{\Om}^M$, with $\bdy_M\Om:=\overline{\Om}^M\sm\Om$. The metric $d_M$ 
extends naturally to a metric on $\overline{\Om}^M$; this extended metric will also be denoted by $d_M$.

Note that~\eqref{e2.1} can be applied to the distances in Definitions~\ref{defn:dinn} and~\ref{dm},
with the new $r$-neighborhoods being denoted $N_{inn}^U(x,r)$ and $N_M(x,r)$ respectively.

\begin{defn}\label{def:end}
A set $E\subset\Om$ is \emph{acceptable} if $E$ is connected and $\overline{E}\cap\bdy\Om$ is
non-empty. A sequence $\{E_k\}_{k\in\N}$ of acceptable sets is a \emph{chain} if all of the following 
conditions hold true:
\begin{enumerate}
\item $E_{k+1}\subset E_k$ for $k\in\N$,
\item for each $k\in\N$, the distance $\dist_M(\Om\cap\bdy E_k, \Om\cap\bdy E_{k+1})>0$,
\item the \emph{impression} $I(\{E_k\}_k):=\bigcap_{k\in\N}\overline{E_k}$ is a subset of $\bdy\Om$.
\end{enumerate} 
\end{defn}

Note that $I(\{E_k\}_k)$ is a compact, connected set.

Our definition differs slightly from that given in~\cite{ABBS} in that condition (b) now references the Mazurkiewicz
distance. However, the examples and results of~\cite{ABBS} still hold. Indeed, whenever 
the analog of condition~(b) was used in~\cite{ABBS} to prove a claim, the key property used was that when
$\{E_k\}_{k\in\N}$ is a chain, for each $k$ and points $x\in E_{k+1}$ and $y\in \Omega\setminus E_k$, every 
connected compact subset of $\Omega$ that contains both $x$ and $y$ must have diameter bounded below by
a positive number that may depend on $k$ but not on $x$, $y$. This is precisely the condition given by 
\emph{our} version of condition~(b), and so the results of~\cite{ABBS} hold for our ends as well. 
The principal result of~\cite{ABBS} we depend on is the identification of ends that have singleton impressions as
certain prime ends. For the convenience of the reader we will give a proof of that fact here; see  Lemma~\ref{lem:single}
below. The examples given
in~\cite{ABBS} are simple enough that it can be directly verified that the prime ends for those example domains in
the sense of~\cite{ABBS} are the same as those in our sense. While we have more chains than~\cite{ABBS}
the additional chains are equivalent (in the sense described in Definition~\ref{def:divide} below) 
to chains that satisfy the conditions of~\cite{ABBS}. 

It is easy to see that a chain in the sense of~\cite{ABBS} is a chain in our sense, but the converse need not be true.
Therefore in general we have more chains in the sense of Definition~\ref{def:end} than does~\cite{ABBS}. Therefore
conceivably we have more ends than does~\cite{ABBS} and thus an end that might be prime in the setting of~\cite{ABBS}
(see the definition of prime ends below) may not be prime in our sense. However, given that the notion of Sobolev spaces
in the metric setting considered here uses paths extensively, the Mazurkiewicz distance seems to be the natural
one to consider here. We point out that we are in good company here; it was shown by N\"akki that condition~(b)
is equivalent to an Ahlfors-type condition regarding extremal length when the domain is a quasiconformally collared
Euclidean domain, see~\cite{Nak}.

We have also chosen to use the Mazurkiewicz distance $\dist_M$ rather than the original metric distance $d$ (as 
\cite{ABBS} does) because in constructing ends that intersect certain open subsets of $\Omega$, it is 
easier to describe the construction when $\dist_M$ is used rather than $\dist$; see Section~5. Thus the use
of $\dist_M$ makes for a simpler exposition, and hence we have chosen to give the above modification. We again
point out that whenever the analog of condition~(b) was used in a proof in~\cite{ABBS}, it is actually the
positivity of $\dist_M$-distance that was needed. Hence we do not lose anything by our modification.

\begin{defn}\label{def:divide}
Given two chains $\{E_k\}_k$ and $\{F_k\}_k$, we say that $\{E_k\}_k$ \emph{divides} $\{F_k\}_k$ if,
for each positive integer $k$, there is a positive integer $j_k$ such that $E_{j_k}\subset F_k$. 
\end{defn}

The above notion of division gives an equivalence relationship on the collection of all chains; two chains
$\{E_k\}_k$ and $\{F_k\}_k$ are equivalent if they both divide each other. Given a
chain $\{E_k\}_k$, its equivalence class is denoted $[\{E_k\}_k]$. If two chains $\{E_k\}_k$ and $\{F_k\}_k$
are equivalent, then their impressions are equal. Let this (common) impression be denoted
$I[\{E_k\}_k]$. These equivalence classes are called \emph{ends} of $\Om$. The collection
of all ends of $\Om$ is called the \emph{end boundary} $\bdy_E\Om$ of $\Om$.

Observe also that if a chain $\{E_k\}_k$ divides another chain $\{G_k\}_k$, and $\{F_k\}_k\in[\{E_k\}_k]$, then
$\{F_k\}_k$ also divides $\{G_k\}_k$. Furthermore, $\{E_k\}_k$ divides every chain in $[\{G_k\}_k]$. Hence the
notion of divisibility extends to ends as well. We take as notation $[\{E_k\}_k]\Big|[\{G_k\}_k]$ to mean that $[\{E_k\}_k]$ 
divides $[\{G_k\}_k]$.

\begin{defn}
An end of $\Om$ is a \emph{prime end} if the only end that divides it is itself. The collection of all 
prime ends of $\Om$, called the \emph{prime end boundary of} $\Om$, is denoted $\bdy_P\Om$. The collection of all 
prime ends of $\Om$
with singleton impression is called the \emph{singleton prime end boundary} and is denoted $\bdy_{SP}\Om$.
\end{defn}

\begin{lem}\label{lem:single}
Let $\{E_k\}_k$ be a chain such that $I(\{E_k\}_k)=\{x_0\}$, that is, the chain has only a singleton impression. Then
$[\{E_k\}_k]$ is a prime end, and for each positive integer $k$ there is a positive real number $r_k>0$ such that a connected
component of $B(x,r_k)\cap\Omega\subset E_k$. 
\end{lem}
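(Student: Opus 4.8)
The plan is to prove the second (geometric) assertion first and then derive primeness from it. So fix a positive integer $k$. We want to produce a radius $r_k>0$ so that some connected component of $B(x_0,r_k)\cap\Om$ is contained in $E_k$. The natural candidate is dictated by condition~(b) in the definition of a chain: since $\dist_M(\Om\cap\bdy E_k,\Om\cap\bdy E_{k+1})>0$, there is a number $\delta_k>0$ such that any connected subset of $\Om$ meeting both $\Om\cap\bdy E_{k+1}$ and $\Om\cap\bdy E_k$ has diameter at least $\delta_k$ (this is exactly the equivalent formulation of~(b) discussed right after Definition~\ref{def:end}). Because $I(\{E_k\}_k)=\{x_0\}$, for large $k$ the sets $\overline{E_k}$ shrink toward $x_0$; in particular $x_0\in\overline{E_{k+1}}$. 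I would set $r_k$ to be a small fraction of $\delta_k$, say $r_k=\delta_k/4$ (or smaller if needed to ensure $B(x_0,r_k)$ is small enough that it meets $\overline{E_{k+1}}$ — which it does, as $x_0\in I(\{E_k\}_k)\subset\overline{E_{k+1}}$, so $B(x_0,r_k)\cap E_{k+1}\ne\emptyset$).

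Now pick a point $y\in B(x_0,r_k)\cap E_{k+1}$ and let $V$ be the connected component of $B(x_0,r_k)\cap\Om$ containing $y$. I claim $V\subset E_k$. Suppose not; then $V$ meets $\Om\setminus E_k$, so since $V$ is connected and $y\in E_{k+1}\subset E_k$, the set $V$ must also meet $\Om\cap\bdy E_k$ (it passes from inside $E_k$ to outside $E_k$ while staying in the open set $\Om$, hence crosses $\Om\cap\bdy E_k$). But $V$ also contains $y\in E_{k+1}$. Here I need to be a little careful: the lower bound $\delta_k$ is on connected sets meeting both $\Om\cap\bdy E_{k+1}$ and $\Om\cap\bdy E_k$, whereas $V$ contains a point of $E_{k+1}$, not necessarily of $\bdy E_{k+1}$. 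To fix this, I would instead run the argument with $E_{k+2}$ in place of $E_{k+1}$: choose $y\in B(x_0,r_k)\cap E_{k+2}$, and note that if $V$ (the component of $y$ in $B(x_0,r_k)\cap\Om$) were not contained in $E_{k+1}$, then $V$ would contain a point of $E_{k+2}\subset E_{k+1}$ and a point of $\Om\setminus E_{k+1}$, hence a point of $\Om\cap\bdy E_{k+1}$; and if additionally $V\not\subset E_k$, then by the same reasoning $V$ contains a point of $\Om\cap\bdy E_k$ as well — so $V$ is a connected subset of $\Om$ meeting both $\Om\cap\bdy E_{k+1}$ and $\Om\cap\bdy E_k$, forcing $\diam V\ge\delta_k$. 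But $V\subset B(x_0,r_k)$, so $\diam V\le 2r_k=\delta_k/2<\delta_k$, a contradiction. Hence $V\subset E_{k-1}$ (after reindexing to absorb the shift by one or two, which is harmless). So after relabeling we obtain, for every $k$, a radius $r_k>0$ and a connected component of $B(x_0,r_k)\cap\Om$ contained in $E_k$. The main obstacle is precisely this bookkeeping: matching the ``boundary versus interior'' mismatch between the $\dist_M$ lower bound and the points we actually control, which the shift-by-one-or-two device resolves.

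For primeness, suppose an end $[\{F_k\}_k]$ divides $[\{E_k\}_k]$; I must show $[\{E_k\}_k]$ divides $[\{F_k\}_k]$, i.e. that for each $m$ there is $n$ with $E_n\subset F_m$. Fix $m$. Since $\{F_k\}_k$ is itself a chain, $I(\{F_k\}_k)=I[\{F_k\}_k]$; and since $[\{F_k\}_k]$ divides $[\{E_k\}_k]$, we have $I(\{F_k\}_k)\subset I(\{E_k\}_k)=\{x_0\}$, and as impressions are nonempty this gives $I(\{F_k\}_k)=\{x_0\}$. Applying the geometric statement just proved to the chain $\{F_k\}_k$, there is $\rho_m>0$ such that some connected component $W$ of $B(x_0,\rho_m)\cap\Om$ satisfies $W\subset F_m$. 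On the other hand, because $I(\{E_k\}_k)=\{x_0\}$ and the $\overline{E_k}$ form a decreasing sequence of compact sets with intersection $\{x_0\}$, for $n$ large enough $\overline{E_n}\subset B(x_0,\rho_m/2)$; fix such an $n$ with also $n>m$. Then $E_n$ is a connected subset of $B(x_0,\rho_m)\cap\Om$ containing a point near $x_0$; more precisely, $E_n\subset B(x_0,\rho_m)\cap\Om$ is connected and its closure contains $x_0$, and $x_0\in\overline W$ as well (since $I(\{F_k\}_k)=\{x_0\}$ and $W\subset F_m$ forces $x_0\in\overline{F_m}$, and one checks $x_0\in\overline W$ directly from the shrinking construction, or argues that $E_n$ and $W$ must lie in the same component of $B(x_0,\rho_m)\cap\Om$ because both accumulate at $x_0$). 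Hence $E_n$ lies in the component $W$ of $B(x_0,\rho_m)\cap\Om$, so $E_n\subset W\subset F_m$. Therefore $\{E_k\}_k$ divides $\{F_k\}_k$, the two chains are equivalent, and $[\{E_k\}_k]$ is prime. $\qed$
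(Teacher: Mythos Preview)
Your argument for the geometric assertion is essentially the paper's proof recast as a direct choice of $r_k$ rather than a contradiction over all small $r$; the core idea (a connected set running from $E_{k+1}$ to $\Om\setminus E_k$ must have diameter at least $\dist_M(\Om\cap\bdy E_k,\Om\cap\bdy E_{k+1})$) is identical. One small remark: the ``shift by one or two'' is unnecessary. If $V$ contains $y\in E_{k+1}$ and $V\not\subset E_k$, then $V$ already meets $\Om\setminus E_{k+1}$ (since $\Om\setminus E_k\subset\Om\setminus E_{k+1}$), so the connected set $V$ intersects both $\Om\cap\bdy E_{k+1}$ and $\Om\cap\bdy E_k$, and the diameter bound follows directly.

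For primeness the paper defers to \cite{ABBS}, whereas you attempt a self-contained argument. Your strategy is sound, but there is a genuine gap at the step ``$E_n$ and $W$ must lie in the same component of $B(x_0,\rho_m)\cap\Om$ because both accumulate at $x_0$.'' This inference fails in general: distinct components of $B(x_0,\rho_m)\cap\Om$ can all have $x_0$ in their closures (e.g.\ the slit disc with $x_0$ on the slit), and knowing $x_0\in\overline W\cap\overline{E_n}$ does not force $E_n\subset W$. The repair is to use the divisibility hypothesis, which you have not yet invoked at this point. Note first that your geometric argument in fact proves more than existence of \emph{some} good component: \emph{every} connected subset of $B(x_0,\rho_m)\cap\Om$ meeting $F_{m+1}$ lies in $F_m$ (same diameter contradiction). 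Now choose $n$ large enough that $E_n\subset B(x_0,\rho_m)\cap\Om$; since $[\{F_k\}_k]$ divides $[\{E_k\}_k]$, there is $j>m$ with $F_j\subset E_n$, and then $F_j\subset F_{m+1}$ as well. Hence $E_n$ is a connected subset of $B(x_0,\rho_m)\cap\Om$ meeting $F_{m+1}$, so $E_n\subset F_m$ as desired.
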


\begin{proof}
We will prove the second part of the lemma, for then the first part follows from~\cite[Lemma~7.3]{ABBS} and
\cite[Corollary~7.11]{ABBS} (see also the discussion in~\cite[Section~10]{ABBS} and~\cite{BBS3}). 

Suppose that there is no such positive number $r_k$. Then for each $r>0$ let $F_k(r)$ be the connected component
of $B(x_0,r)\cap\Omega$ containing points $x_k^r\in E_{k+1}$. Since $F_k(r)\not\subset E_k$, it follows that 
there is a point $y_k^r\in F_k(r)\setminus E_k$. Because $F_k(r)$ is a connected open subset of the quasi convex
space $X$, it follows that $F_k(r)$ is rectifiably connected (see the discussion before Definition~\ref{defn:dinn}).
Thus there is a compact curve $\gamma$ in $F_k(r)$ connecting $x_k^r\in E_{k+1}$ to $y_k^r\not\in E_k$, and the 
diameter of such a curve is at most $2r$. Thus 
\[
  0<\dist_M(\Om\cap\bdy E_k, \Om\cap\bdy E_{k+1})\le \text{diam}(\gamma)\le 2r,
\] 
the above inequality holding for each $r>0$. This is not possible. Hence such a positive number $r_k$ must exist.
\end{proof}

The following series of definitions describes a topology on $\bdy_E\Om$ that meshes well with the topology of $\Om$.
We first ``stitch" $\partial_E\Omega$ to $\Omega$ via a sequential topology as follows.

\begin{defn}
Given a sequence $\{x_i\}_i$ in $\Om$, we say that 
$x_i\peto[\{E_k\}_k]$ if for every positive integer
$k$ there is a positive integer $i_k$ such that whenever $i\ge i_k$ we have $x_i\in E_k$.
\end{defn}

One should be aware that a sequence in $\Om$ can converge to two \emph{different} ends, as 
\cite[Example~8.9]{ABBS} shows.

We next extend the topology to $\partial_E\Om$ by describing sequential toplogy on $\partial_E\Om$.

\begin{defn}
Given a sequence
$\{[\{E_k^n\}_k]\}_n$ of ends of $\Om$ and an end $[\{E_k^\infty\}_k]$ of $\Om$, we say that
$[\{E_k^n\}_k]\peto[\{E_k^\infty\}_k]$ if for each positive integer $k$ there is a positive integer $n_k$ such that
whenever $n\ge n_k$, there is a positive integer $j_n$ such that $E_{j_n}^n\subset E_k^\infty$.
\end{defn}

A modification of~\cite[Example~8.9]{ABBS} shows that a sequence of ends can converge to more than one
end. However, a sequence of ends will never converge to a point in $\Om$.

\begin{defn}
Equip the set $\overline{\Om}^E:=\Om\cup\bdy_E\Om$ with the sequential topology associated with the above notion of limits.
Equip the subset $\overline{\Om}^P:=\Om\cup\bdy_P\Om$ with the subspace topology inherited from $\overline{\Om}^E$. We call the sets $\overline{\Om}^E$
and $\overline{\Om}^P$ the {\it End Closure of $\Om$} and the {\it Prime End Closure of $\Om$} respectively.
\end{defn}

Sometimes, it may be useful to talk about the closure or boundary of a set $V\subset\overline{\Om}^P$ with respect to the
Prime End topology of $\Om$. To avoid confusion we will denote the {\it Prime End closure of $V$ with respect to the Prime End topology
on $\Om$} as $\overline{V}^{P,\Om}$ and the {\it Prime End boundary of $V$ with respect to the Prime End topology of $\Om$} as $\bdy_P^\Om V$.
Note that if $\overline{V}\subset \Om$, then $\overline{V}^{P,\Om}=\overline{V}$ and $\bdy_P^\Om V=\bdy V$.

%

\begin{rem}
Recall that by $\bdy_{SP}\Om$ we mean the collection of all prime ends of $\Om$ whose impressions contain only one 
point.
Recall the Mazurkiewicz boundary $\bdy_M\Om$ of $\Om$ from Definition~\ref{dm}.
Though $\OmPC$ admits no metric, it is shown in \cite[Theorem~9.5]{ABBS} that there is a homeomorphism 
$\Phi:\Om\cup\bdy_{SP}\Om\to\overline{\Om}^M$ such that $\Phi|_\Om$ is the identity map and 
$\Phi|_{\bdy_{SP}\Om}:\bdy_{SP}\Om\to\bdy_M\Om$. It follows that $\Om\cup\bdy_{SP}\Om$ is metrizable via the 
pullback of the metric $d_M$.
So, for $x,y\in\Om\cup\bdy_{SP}\Om$, by $d_M(x,y)$ we truly mean $d_M(\Phi(x),\Phi(y))$.
\end{rem}

\begin{rem}\label{rem:pebasis}
Given a set $G\subset\Om$, we define
\[
G^P:=G\cup\{[\{E_k\}_k]\in\bdry_P\Om\ |\text{ for some }j, \, E_j\subset G\}.
\]
It was shown in~\cite[Proposition~8.5]{ABBS} that the collection of sets
\[
\{G,G^P\ |\ G\subset\Om\text{ is open}\}
\]
forms a basis for the topology on $\overline{\Om}^P$.  Note that given the above definition of $G^P$, we have
$\overline{\Om}^P=\Om^P$.
In the next few sections, we will focus on the sequential definition of this topology. In later sections, the above natural
basis will prove invaluable in making our results more intuitive.
\end{rem}

\begin{defn}\label{accthrough}
We say that a point $x_0\in\bdy\Om$ is \emph{accessible from $\Om$} if there is a curve 
$\ga:[0,1]\to\overline{\Om}$ such that $\ga(1)=x_0$ and $\ga([0,1))\subset\Om$. We say that a point $x_0\in\bdy\Om$ is
\emph{accessible through the chain $\{E_k\}_k$} if there is such a curve $\ga$  satisfying in addition that 
for each positive integer $k$
there is some $0<t_k<1$ with $\ga([t_k,1))\subset E_k$. The curve $\ga$ is said to \emph{access $x_0$ through $\{E_k\}$}.
\end{defn} 

It is easy to see that if $x_0$ is accessible through $\{E_k\}_k$ and $\{F_k\}_k\in[\{E_k\}_k]$, then $x_0$ is 
accessible through $\{F_k\}_k$ as well. Furthermore, $x_0\in I[\{E_k\}_k]$. Thus, we can extend the above definitions to ends.
It was shown in~\cite{ABBS} that if $z_0\in\bdy\Om$ is accessible, then it is accessible through some 
prime end $[\{E_k\}_k]$ with $I[\{E_k\}_k]=\{x_0\}$. In addition, for all prime ends $[\{E_k\}_k]\in\bdy_{SP}\Om$, the point in
$I[\{E_k\}_k]$ is accessible through $[\{E_k\}_k]$.

However, as examples in~\cite{ABBS} show, for some domains $\Omega$, 
not all points in $\bdy\Om$ are accessible from $\Om$, and it is not
 true that $\bdy_P\Om$ is always compact. This has implications to the application of the Perron method in solving
Dirichlet problems for the boundary $\bdy_P\Om$, and the goal of this paper is to find a way to overcome this lack of compactness; the key lemma in this direction is Lemma~\ref{lem:Ass(a)}.

\begin{defn}\label{accside}
Let $V\subset\Om$ be an open connected set. We say that a point $x_0\in\bdy\Om$ is 
\emph{accessible from the side of} $V$ if there is a curve $\ga:[0,1]\to\overline{\Om}$ such that
$\ga([0,1))\subset\Om$, $\ga(1)=x_0$, and for each positive integer $n$ there is a real number
$t_n$ with $1-\tfrac1n<t_n<1$ such that $\ga(t_n)\in V$. We say that a chain $\{E_k\}_k$ of $\Om$ is 
\emph{from the side of} $V$ if $E_k\cap V$ is non-empty for each positive integer $k$. 
\end{defn}

Note that if $\{E_k\}_k$ is from the side of $V$, and $\{F_k\}_k\in[\{E_k\}_k]$, then $\{F_k\}_k$ is also 
from the side of $V$. Hence the property of being \emph{from the side of} $V$ is inherited
from chains by ends.

{\begin{rem}\label{curve-param}
In this paper, when we discuss curves $\ga$ that are locally rectifiable, we assume that $\ga$ is essentially
arc-length parametrized; that is, $\ga:[0,\infty)\to X$ such that $\ga\vert_{[0,\ell(\ga))}$ is arc-length parametrized,
and if $\ell(\ga)<\infty$, then for $t\ge \ell(\ga)$ we have $\ga(t)=\ga(\ell(\ga))$. We call such parametrizations 
\emph{standard parametrizations}.
\end{rem}

Note that in Definitions \ref{accthrough} and \ref{accside}, we could take $\ga$ to be maps from $[0,\infty)$ rather than
from $[0,1]$. In this case, in Definition~\ref{accside} we require
 $\ell(\gamma)-1/n< t_n<\ell(\gamma)$ whenever $\ell(\gamma)<\infty$, and 
 $n<t_n<\ell(\gamma)$ when $\ell(\gamma)=\infty$, rather than
$1-1/n<t_n<1$.

\section{Preliminaries: Newton-Sobolev spaces and potential theory}

We follow~\cite{Sh-rev} in considering the Newtonian spaces as the analog of Sobolev spaces in the metric setting.
Given a function $u:X\to[-\infty,\infty]$, we say that a non-negative Borel measurable function $g$ on $X$ is an
\emph{upper gradient} of $u$ if whenever $\gamma$ is a non-constant compact rectifiable curve in $X$, we have
\[
  |u(x)-u(y)|\le \int_\gamma g\, ds,
\]
where $x$ and $y$ denote the two end points of $\gamma$. The above inequality should be interpreted to mean that
$\int_\gamma g\, ds=\infty$ if at least one of $u(x)$, $u(y)$ is not finite. The notion of upper gradients is originally 
due to Heinonen and Koskela \cite{HK}, where it was called a very weak gradient. Of course, if $g$ is an upper 
gradient of $u$ and $\rho$ is a non-negative Borel measurable function on $X$, then $g+\rho$ is also an upper 
gradient of $u$. If $u$ has an upper gradient that belongs to $L^p(X)$, then the collection of all upper gradients of
$u$ in $L^p(X)$ forms a convex subset of $L^p(X)$. Therefore, by the uniform convexity of $L^p(X)$ when $1<p<\infty$ 
there is a unique function $g_u\in L^p(X)$ that is in the $L^p$-closure of this convex set, with minimal norm. 
Such a function $g_u$ is called the \emph{minimal $p$-weak upper gradient} of $u$.

Given $1<p<\infty$, the Newtonian space $N^{1,p}(X)$ is the space {\small
\[
 N^{1,p}(X):=\lbrace u:X\to [-\infty,\infty]\, :\, \int_X|u|^p\, d\mu<\infty,\text{ has an upper gradient }g\in L^p(X)
 \rbrace / \sim,
\]}
where the equivalence relationship $\sim$ is such that $u\sim v$ if and only if 
\[
  \Vert u-v\Vert_{N^{1,p}(X)}:=\left[\int_X|u-v|^p\, d\mu+\inf_g\int_Xg^p\, d\mu\right]^{1/p}=0,
\]
the infimum being taken over all upper gradients $g$ of $u-v$. See~\cite{Sh-rev} or~\cite{BBbook} for a discussion on 
the properties of $N^{1,p}(X)$. Just as sets of measure zero are exceptional sets in the $L^p$-theory, sets of
$p$-capacity zero are exceptional sets in the potential theory associated with $N^{1,p}(X)$. Given a set
$A\subset X$, its \emph{$p$-capacity} is the number
\[
   C_p(A:X):=\inf _u \Vert u\Vert_{N^{1,p}(X)}^p,
\]
where the infimum is taken over all $u\in N^{1,p}(X)$ that satisfy $u\ge 1$ on $A$.

\begin{defn}
We say that $X$ supports a \p-Poincar\'e inequality if there  are constants $C,\lambda\ge 1$ such that
whenever $u$ is a function on $X$ with upper gradient $g$ on $X$ and $B$ is a ball in $X$,
\[
  \frac{1}{\mu(B)}\, \int_B|u-u_B|\, d\mu\le C\, \text{rad}(B)\, \left(\frac{1}{\mu(\lambda B)}\, \int_{\lambda B}g^p\, d\mu\right)^{1/p}.
\]
Here $u_B$ denotes the integral average of $u$ on $B$:
\[
  u_B:=\frac{1}{\mu(B)}\, \int_B\, u\, d\mu.
\]
Furthermore, we say that the measure $\mu$ on $X$ is doubling if there is a constant $C\ge 1$ such that
whenever $B$ is a ball in $X$,
\[
   \mu(2B)\le C\, \mu(B).
\]
\end{defn}

\begin{Assumption}\label{Xgeo} 
\emph{Henceforth, in this paper we will assume that $\mu$ is doubling and that $X$ supports a \p-Poincar\'e inequality.}
We refer the interested reader to~\cite{HaKo} for an in-depth discussion on Poincar\'e inequalities. It was
also shown in~\cite{HaKo} that if $X$ is complete, $\mu$ is doubling, and $X$ supports a \p-Poincar\'e 
inequality, then $X$ is quasiconvex.  Given that the notions of prime ends, rectifiability of curves,
and the metric topology are preserved under biLipschitz change in the metric, 
\emph{henceforth we will 
assume also that $X$ is a geodesic space}.
\end{Assumption}


\begin{defn}\label{zero-bound}
Given a domain (open connected set) $\Om\subset X$, the space of Newtonian functions with zero boundary
values is the space
\[
 N^{1,p}_0(\Om):=\{u\in N^{1,p}(X)\, :\, u=0\text{ in }X\setminus\Om\}.
\]
We refer the reader to~\cite{Sh-har} and the references therein for properties related to this function space.
Given a function $u$ defined only on $\Om$, we say that $u\in N^{1,p}_0(\Om)$ if the zero-extension of 
$u$ lies in $N^{1,p}_0(\Om)$.
\end{defn}

Finally, we introduce the concept of \p-minimizers.

\begin{defn}
A function $u\in\Np(\Om)$ is said to be a \emph{\p-minimizer in $\Om$} if it has minimal \p-energy in 
$\Om$. That is, for all $\phi\in\Np_0(\Om)$,
\[
\int_{\supp(\phi)}g_u^pd\mu\leq\int_{\supp(\phi)}g_{u+\phi}^pd\mu.
\]
Here, $g_u$ and $g_{u+\phi}$ denote the minimal $p$-weak upper gradient of $u$ and $u+\phi$ respectively.
A function that satisfies this condition for nonnegative $\phi\in\Np_0(\Om)$ is said to be 
a \emph{\p-superminimizer in $\Om$}. A function is said to be \emph{\p-harmonic in $\Om$} if it is a 
continuous \p-minimizer in $\Om$.
\end{defn}

As the results in Kinnunen-Shanmugalingam \cite{KS} show, 
under the hypotheses considered in this paper, every 
\p-minimizer can be modified on a set of $p$-capacity zero to obtain a locally H\"older continuous 
\p-harmonic function.

The lower semicontinuous regularization of a function $u$ is  
\[
u^*(x)=\essliminf_{y\to x}u(y).
\]

As shown in~\cite{KM}, the equality $u^*=u$ holds outside a set of zero \p-capacity when $u$ is a
 \p-superminimizer. 
For this reason, any \p-superminimizer used in this paper will be 
assumed to be lower semicontinuously regularized in this manner.
Recall from the above Definition~\ref{zero-bound} that a function
defined on $\Om$ is in $N^{1,p}_0(\Om)$ if its zero-extension to $X\setminus\Om$ 
is in $N^{1,p}(X)$.

\begin{defn}\label{def-obst}
Let $V\subset X$ be open and bounded, with $C_p(X\sm V)>0$. Then, for $f\in\Np(V)$ and $\psi:V\to\eR$, we define the set
\[
\mathcal{K}_{\psi,f}(V):=\{v\in\Np(V)\ :\ v-f\in\Np_0(V),\ v\geq\psi\text{ a.e. in }V\}.
\]
A function $u\in\mathcal{K}_{\psi,f}(V)$ is said to be a \emph{solution of the $\mathcal{K}_{\psi,f}(V)$-obstacle problem} if
\[
\int_V g_u^pd\mu\leq\int_Vg_v^pd\mu,\text{ for all $v\in\mathcal{K}_{\psi,f}(V)$}.
\]
\end{defn}

It is shown in \cite[Theorem~3.2]{KM} that solutions to the $\mathcal{K}_{\psi,f}(V)$-obstacle problem 
exist and are unique (in $\Np(V)$), provided $\mathcal{K}_{\psi,f}(V)\not=\emptyset$.


Given a function $f\in N^{1,p}(X)$ and a bounded domain $\Om\subset X$ with $C_p(X\setminus\Om)>0$, there is a 
unique function $u\in N^{1,p}(X)$ such that $u-f\in N^{1,p}_0(\Om)$ and $u$ 
is \p-harmonic in $\Om$. We denote this solution $u$ by
$H_\Om f$. See~\cite{Sh-har} for the proofs of existence and uniqueness of such solutions.
Note that $H_\Om f$ is the solution to the $\mathcal{K}_{-\infty,f}(\Om)$-obstacle problem.
The condition $C_p(X\setminus\Om)>0$ is needed in order to have non-trivial solutions in $\Om$. Should
$C_p(X\setminus\Om)=0$, then $N^{1,p}_0(\Om)=N^{1,p}(X)$, and in this case for every non-negative 
$f\in N^{1,p}_0(\Om)$ we would have that $H_\Om f$ be a non-negative $p$-harmonic function on $X$ itself,
and hence by the Harnack indequality (see~\cite{KS}) we would have $H_\Om f=0$. By assuming 
$C_p(X\setminus\Om)>0$ we avoid this problem.

Our setting in this paper will primarily be $\overline{\Om}^P$. 
%
Since the subspace topology of $\Om$ inherited from $\overline{\Om}^P$ agrees with the standard
metric topology on $\Om$ inherited from $X$, 
the Newton-Sobolev space $N^{1,p}(\Om)$ can be seen as the function space corresponding to both 
$\Om$, seen as a domain in $X$, and $\Om$, seen as a domain in $\overline{\Om}^P$.
The restriction of $f\in N^{1,p}(X)$ to $\Om$ belongs to $N^{1,p}(\Om)$, and thus 
the notation $Hf:=H_\Om f$ is unambiguous.

However, one should keep in mind that in general functions in $N^{1,p}(X)$, when restricted to $\Om$, may
not have a natural extension to $\bdy_P\Om$. 
This is in contrast to the standard boundary $\bdy\Om$, where one can consider traces of Sobolev functions
as discussed for example in~\cite{Maz} and~\cite{HM}.
In this paper we use $Hf$ for such $f$ only as an intermediate
tool to study the Perron solutions adapted to $\bdy_P\Om$, but not as the end product itself.

\section{Structure of the end and prime end boundaries}

In this section we discuss some structures of the prime end boundary; these structures are useful in the 
subsequent sections where we consider the Perron method for the prime end boundary of a bounded domain.
We first state 
two elementary lemmas regarding the geometry of chains. The proof of these lemmas use the properness
of $X$ (that is, closed and bounded subsets of $X$ are compact).

\begin{lem}\label{lem:sep2}
Given a chain $\{E_k\}_k$, for every $\eps>0$ there is an acceptable set $E_j\in\{E_k\}_k$ such that
\[
E_j\subset N\Big(I(\{E_k\}_k),\eps\Big).
\]
\end{lem}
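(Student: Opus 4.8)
The plan is to argue by contradiction using the properness of $X$ together with the definition of the impression as a nested intersection of compact sets. Suppose the conclusion fails for some $\eps_0>0$; then for every $k$ we have $E_k\not\subset N(I(\{E_k\}_k),\eps_0)$, so we may pick a point $x_k\in \overline{E_k}\setminus N(I(\{E_k\}_k),\eps_0)$, i.e.\ $\dist(x_k, I(\{E_k\}_k))\ge\eps_0$. Since the $\overline{E_k}$ are nested and $\overline{E_1}$ is a closed bounded (hence compact) subset of the proper space $X$, the sequence $\{x_k\}_k$ has a subsequence converging to some point $x_\infty\in X$.

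Next I would identify the limit point. Because $x_j\in\overline{E_k}$ for all $j\ge k$ and $\overline{E_k}$ is closed, passing to the limit along the subsequence gives $x_\infty\in\overline{E_k}$ for every $k$; hence $x_\infty\in\bigcap_k\overline{E_k}=I(\{E_k\}_k)$. On the other hand, the distance condition $\dist(x_k,I(\{E_k\}_k))\ge\eps_0$ is preserved under the limit: for the $I$-set is fixed (independent of $k$), so $\dist(x_\infty,I(\{E_k\}_k))\ge\eps_0>0$, which contradicts $x_\infty\in I(\{E_k\}_k)$. This contradiction establishes the existence of the desired $E_j$.

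The main subtlety — and the only point that needs care — is that the set $I(\{E_k\}_k)$ against which distances are measured does \emph{not} vary with $k$, so the lower bound $\dist(x_k, I)\ge\eps_0$ survives the limiting process directly; there is no need to juggle a moving target set. I would also note at the outset that $I(\{E_k\}_k)$ is nonempty and compact (it is a decreasing intersection of nonempty compact sets in the proper space $X$, as already remarked after Definition~\ref{def:end}), so $N(I(\{E_k\}_k),\eps)$ is a genuine open neighborhood and $\dist(\,\cdot\,,I(\{E_k\}_k))$ is a well-defined continuous function. Everything else is the standard compactness-and-nested-sets routine, so no lengthy computation is required.
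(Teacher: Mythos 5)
Your proof is correct, and it is exactly the "direct topological argument" the paper alludes to while omitting the proof: nestedness of the $\overline{E_k}$, compactness of $\overline{E_1}$ via properness of $X$, and continuity of $\dist(\,\cdot\,,I)$ against the fixed impression. No gaps.
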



\begin{lem}\label{lem:sep3}
Let $\{E_k\}_k$ be a chain. Then, for every $\eps>0$ and integer $k$, there is a connected component $C_k^\eps$ of $N(I(\{E_k\}_k),\eps)\cap E_k$ such that $I(\{E_k\}_k)\subset \overline{C_k^\eps}$.
\end{lem}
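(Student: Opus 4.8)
The plan is to combine Lemma~\ref{lem:sep2} with a compactness/connectedness argument, exploiting the properness of $X$ and the fact that the impression $I=I(\{E_k\}_k)=\bigcap_k\overline{E_k}$ is a compact connected subset of $\bdy\Om$. Fix $\eps>0$ and an integer $k$. By Lemma~\ref{lem:sep2} we may pick $j\ge k$ with $E_j\subset N(I,\eps)$; since $E_j\subset E_k$ we then have $E_j\subset N(I,\eps)\cap E_k$. Now for each point $x\in I$ there is, for every small $\delta>0$, a point of $E_j$ within distance $\delta$ of $x$ (because $x\in\overline{E_j}$, as $I\subset\overline{E_j}$), so $x$ lies in the closure of the open set $N(I,\eps)\cap E_k$. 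The connected components of the open set $N(I,\eps)\cap E_k$ are open, and I want to show that one single component $C_k^\eps$ has $I$ in its closure. The natural route is: (i) show that points of $I$ that are limits of a single component form both an open and closed subset of $I$; (ii) use connectedness of $I$ to conclude it is all of $I$.

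First I would set up the following auxiliary claim: if $x,y\in I$ and $d(x,y)$ is small enough (say less than some threshold depending on $\eps$ and on $\dist(I\cap\bdy E_k$-type data), but in fact we can be cruder), then any component of $N(I,\eps)\cap E_k$ whose closure contains $x$ also has closure containing $y$. To see this, since $I$ is compact and contained in $\bdy\Om$, and $E_k$ is an acceptable set with $\overline{E_k}\supset I$, I would take points $x'\in E_j$ near $x$ and $y'\in E_j$ near $y$; because $E_j$ is connected (acceptable sets are connected) and $E_j\subset N(I,\eps)\cap E_k$, the points $x'$ and $y'$ lie in the same component of $N(I,\eps)\cap E_k$. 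Hence every point of $I$ lies in the closure of the \emph{same} component $C_k^\eps$ — namely the one containing all of $E_j$. This actually bypasses the open/closed dichotomy entirely: since $E_j$ is connected it lies in one component $C_k^\eps$ of $N(I,\eps)\cap E_k$, and since $I\subset\overline{E_j}\subset\overline{C_k^\eps}$, we are done. So the argument is shorter than I first feared: the key observation is simply that $E_j$ itself is connected and $I\subset\overline{E_j}$.

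Let me therefore streamline: choose $j$ with $E_j\subset N(I,\eps)\cap E_k$ via Lemma~\ref{lem:sep2} (and $j\ge k$ so $E_j\subset E_k$). The set $E_j$ is connected (Definition~\ref{def:end}, acceptable sets are connected), hence it is contained in exactly one connected component $C_k^\eps$ of the open set $N(I,\eps)\cap E_k$. Finally $I=I(\{E_k\}_k)=\bigcap_m\overline{E_m}\subset\overline{E_j}\subset\overline{C_k^\eps}$, which is the assertion. The main point to be careful about — and the only place properness might really be invoked — is to make sure Lemma~\ref{lem:sep2} can be applied with an index $j\ge k$; but if Lemma~\ref{lem:sep2} returns some $E_{j_0}$, then since the $E_m$ are nested decreasing we may simply take $j=\max(j_0,k)$, so that $E_j\subset E_{j_0}\subset N(I,\eps)$ and $E_j\subset E_k$ simultaneously. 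That disposes of the only subtlety, and I do not expect any genuine obstacle here; the lemma is essentially a packaging of Lemma~\ref{lem:sep2} together with connectedness of acceptable sets.
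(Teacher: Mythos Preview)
Your streamlined argument is correct and is precisely the kind of direct topological argument the paper has in mind; note that the paper itself omits the proof of this lemma, simply remarking that it follows from elementary topological considerations and the definition of ends. Your key observation---that Lemma~\ref{lem:sep2} furnishes a connected set $E_j\subset N(I,\eps)\cap E_k$ whose closure already contains $I$, so the component of $N(I,\eps)\cap E_k$ containing $E_j$ works---is exactly right, and the adjustment $j=\max(j_0,k)$ handles the only bookkeeping issue.
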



The above lemmas can be proven by direct topological arguments and by using the definition of ends; we leave the proof
to the interested reader.
Next we prove two useful lemmas about the topology on $\OmPC$.

\begin{lem}\label{lem:l1}
If $\{x_k\}_k$ is a sequence of points in $\Om$ and $[\{E_k\}_k]\in\bdy_E\Om$ such that
$x_k\to [\{E_k\}_k]$, then no subsequence of $\{x_k\}_k$ has a limit point in $\Om$. 
\end{lem}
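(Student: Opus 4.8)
The plan is to argue by contradiction. Suppose $x_k \to [\{E_k\}_k]$ and some subsequence $\{x_{k_j}\}_j$ has a limit point $z \in \Om$. Passing to a further subsequence, we may assume $x_{k_j} \to z$ in the metric of $X$. The key tension is between two facts: convergence to the end forces the $x_{k_j}$ to lie eventually inside every $E_m$, while convergence to an interior point $z$ pins them near a fixed compact set well inside $\Om$, whereas the impression $I(\{E_k\}_k) = \bigcap_m \overline{E_m}$ sits on $\bdy\Om$. The point is to show these cannot coexist.

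First I would fix a positive number $\rho < \tfrac13 \dist(z, X\setminus\Om)$, so that $\overline{B(z,2\rho)} \subset \Om$. By the definition of $x_k \to [\{E_k\}_k]$, for each $m$ there is $i_m$ with $x_i \in E_m$ for all $i \ge i_m$; in particular, for every $m$ we have $x_{k_j} \in E_m$ for all large $j$. Next I would invoke Lemma~\ref{lem:sep2}: since $I(\{E_k\}_k) \subset \bdy\Om$ is compact and disjoint from $\overline{B(z,2\rho)}$, choosing $\eps$ small enough that $N(I(\{E_k\}_k),\eps) \cap \overline{B(z,2\rho)} = \emptyset$ yields an index $m_0$ with $E_{m_0} \subset N(I(\{E_k\}_k),\eps)$, hence $E_{m_0} \cap \overline{B(z,2\rho)} = \emptyset$. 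But $x_{k_j} \to z$ gives $x_{k_j} \in B(z,\rho)$ for all large $j$, while convergence to the end gives $x_{k_j} \in E_{m_0}$ for all large $j$ — contradicting $E_{m_0} \cap B(z,\rho) = \emptyset$. This contradiction shows no such limit point $z$ can exist.

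The main obstacle — really the only subtlety — is making sure the impression is genuinely separated from a neighborhood of $z$ inside $\Om$, which is exactly what Lemma~\ref{lem:sep2} delivers: it forces some tail set $E_{m_0}$ of the chain to retreat into an arbitrarily small neighborhood of $I(\{E_k\}_k) \subset \bdy\Om$, and a point of $\Om$ at positive distance from $\bdy\Om$ is automatically at positive distance from that neighborhood. Everything else is the standard extraction of a convergent subsequence and the bookkeeping of the two "eventually in" conditions. I do not expect to need properness of $X$ directly here, only Lemma~\ref{lem:sep2} (whose proof uses it) together with the elementary fact that $z \in \Om$ has a neighborhood whose closure lies in $\Om$ and is therefore disjoint from $\bdy\Om$.
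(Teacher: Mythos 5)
Your argument is correct and rests on the same two facts as the paper's proof: the tail of the sequence is trapped in every $E_m$, and the impression $\bigcap_m\overline{E_m}$ lies in $\bdy\Om$. The paper gets there more directly --- any cluster point $z$ of the sequence must lie in $\overline{E_m}$ for every $m$, hence in the impression, hence in $\bdy\Om$ --- so your detour through Lemma~\ref{lem:sep2} and the separation of $\overline{B(z,2\rho)}$ from $N(I(\{E_k\}_k),\eps)$, while valid, is not needed.
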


\begin{proof}
Note that $\bigcap_k\overline{E_k}\subset\bdy\Om$
and, for each positive integer $j$, the tail-end of the sequence $\{x_k\}_k$ lies in $E_j$. Therefore, every cluster point of
$\{x_k\}_k$ must lie in $\bigcap_k\overline{E_k}\subset\bdy\Om$.
\end{proof}

\begin{lem}\label{lem:nbhd-bdy}
If $U\subset\OmPC$
is an open set in the prime end topology 
such that $\bdy_P\Om\subset U$, then for each 
$[\{E_k\}_k]\in\bdy_P\Om$ and for each $\{E_k\}_k\in[\{E_k\}_k]$, there is a positive integer $k_U$ such that
$E_{k_U}\subset U$.
\end{lem}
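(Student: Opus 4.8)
The plan is to argue by contradiction using the basis for the prime end topology recalled in Remark~\ref{rem:pebasis}. Suppose the conclusion fails for some prime end $[\{E_k\}_k]$ and some representative chain $\{E_k\}_k$: that is, $E_k \not\subset U$ for every positive integer $k$. I want to produce a point of $\bdy_P\Om$ that lies outside $U$, contradicting $\bdy_P\Om\subset U$. The natural candidate is the prime end $[\{E_k\}_k]$ itself; but $[\{E_k\}_k]\in\bdy_P\Om\subset U$, so I need to be more careful. The point is that $U$ being \emph{open} in the prime end topology means that, around $[\{E_k\}_k]$, $U$ contains a basic open set of the form $G^P$ for some open $G\subset\Om$ with $E_j\subset G$ for some $j$ (this is exactly what it means for $G^P$ to be a neighborhood of $[\{E_k\}_k]$: by definition of $G^P$, a prime end lies in $G^P$ precisely when one of the sets in one of its chains sits inside $G$). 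Once I have such a $G$ with $E_j\subset G$, I am essentially done provided I can upgrade ``$E_j\subset G$'' to ``$E_j\subset U$'': indeed $E_j\subset G\subset G^P\subset U$, since $G\subset G^P$ always and $G^P\subset U$ by the choice of the basic neighborhood.

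So the key steps, in order, are: (1) Assume for contradiction that $E_k\not\subset U$ for all $k$. (2) Since $U$ is open in the prime end topology and $[\{E_k\}_k]\in U$, invoke Remark~\ref{rem:pebasis} to find a basic open set $W$ with $[\{E_k\}_k]\in W\subset U$, where $W$ is either an open subset $G$ of $\Om$ or a set of the form $G^P$ with $G\subset\Om$ open. Since $W$ must contain the boundary point $[\{E_k\}_k]$, and open subsets of $\Om$ contain no prime ends, $W$ must be of the form $G^P$. (3) Unpack $[\{E_k\}_k]\in G^P$: by the definition of $G^P$ in Remark~\ref{rem:pebasis}, there is some index $j$ with $E_j\subset G$. (Here one uses that divisibility/equivalence of chains is compatible with the basis, so the condition can be read off on \emph{this} representative chain; alternatively, if it holds for some other chain $\{F_k\}_k\in[\{E_k\}_k]$, then since $\{E_k\}_k$ divides $\{F_k\}_k$ there is $j$ with $E_j\subset F_m\subset G$ for suitable $m$.) (4) Conclude $E_j\subset G\subset G^P\subset U$, so taking $k_U=j$ contradicts step (1). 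This contradiction proves the lemma.

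The main obstacle is step (2)–(3): making precise that the basic neighborhoods of a prime end $[\{E_k\}_k]$ are exactly the sets $G^P$ with $E_j\subset G$ for some $j$, and that this can be witnessed on any prescribed representative chain rather than only on some chain in the equivalence class. This is where I would lean on Remark~\ref{rem:pebasis} (citing \cite[Proposition~8.5]{ABBS}) together with the observation, recorded just after Definition~\ref{def:divide}, that if $\{E_k\}_k$ divides $\{G_k\}_k$ and $\{F_k\}_k\in[\{E_k\}_k]$ then $\{F_k\}_k$ also divides $\{G_k\}_k$; equivalently, the defining condition ``$E_j\subset G$ for some $j$'' is independent of the chosen representative up to reindexing. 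Everything else is formal set-theoretic manipulation with the basis.
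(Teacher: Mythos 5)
Your proposal is correct, but it takes a genuinely different route from the paper's. The paper argues directly from the sequential definition of the prime end topology: assuming $E_k\not\subset U$ for every $k$, it picks $x_k\in E_k\setminus U$, observes that $x_k\peto[\{E_k\}_k]$ (the chain is nested, so the tail of the sequence lies in each $E_j$), and then uses that $U$ is open in the sequential topology and contains the limit to force a tail of $\{x_k\}_k$ into $U$, contradicting the choice of the $x_k$. You instead invoke the basis $\{G,G^P\}$ of Remark~\ref{rem:pebasis} (i.e.\ \cite[Proposition~8.5]{ABBS}): a basic neighborhood of $[\{E_k\}_k]$ contained in $U$ must be of the form $G^P$, membership in $G^P$ gives $E_j\subset G\subset G^P\subset U$ for some $j$, and you correctly note that the witness can be transferred to the prescribed representative chain via mutual divisibility of equivalent chains. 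Both arguments are sound. The paper's version is self-contained modulo the definition of sequential convergence and needs no care about representatives; yours makes the lemma an essentially formal consequence of the quoted basis result, at the cost of depending on that result and of the (correctly handled) bookkeeping that the condition ``$E_j\subset G$ for some $j$'' does not depend on the chosen representative. Incidentally, your contradiction framing in step (1) is superfluous: steps (2)--(4) produce $k_U$ directly.
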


\begin{proof}
We prove this lemma by contradiction. Suppose that $\{E_k\}\in[\{E_k\}_k]\in\bdy_P\Om$ 
such that for each positive integer $k$ we have $E_k\not\subset U$, that is, we can find
$x_k\in E_k\setminus U$. It then follows that $\{x_k\}_k$ is a sequence in $\Om$ with
$x_k\to [\{E_k\}_k]$.  But then, because $U$ is open in the sequential topology of
$\Om\cup\bdy_P\Om$ and $[\{E_k\}_k]\in U$, we must necessarily have a positive integer $k_U$ 
such that whenever $k\ge k_U$,
$x_k\in U$, which contradicts the choice of $x_k\in E_k\setminus U$.
\end{proof}

Next, we prove a useful relation between $\bdy_{SP}\Om$ and $\bdy_P\Om$.

\begin{thm}\label{spdense}
With respect to the prime end topology on $\OmPC$, $\bdy_{SP}\Om$ is dense in $\bdy_P\Om$.
\end{thm}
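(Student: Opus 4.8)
The plan is to show that every prime end $[\{E_k\}_k]\in\bdy_P\Om$ is a limit, in the prime end topology, of a sequence of prime ends with singleton impression. Fix $[\{E_k\}_k]\in\bdy_P\Om$ with impression $I=I[\{E_k\}_k]\subset\bdy\Om$. The idea is to ``probe'' the chain from inside by accessible boundary points: for each $k$, pick a point $x_k\in E_{k+1}$, and then run a rectifiable curve in $E_{k+1}$ from $x_k$ toward $\bdy\Om$ (using that $\overline{E_{k+1}}\cap\bdy\Om\neq\emptyset$ and that $E_{k+1}$, being open and connected in the quasiconvex space $X$, is rectifiably connected). Such a curve either reaches $\bdy\Om$ or can be extended until it has a limit point on $\bdy\Om$; properness of $X$ and the Arzel\`a--Ascoli argument recalled after Definition~\ref{defn:dinn} guarantee we can extract an arc that accesses some point $z_k\in\bdy\Om$ while eventually staying inside $E_{k+1}$. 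By the discussion after Definition~\ref{accthrough} (the accessibility results quoted from~\cite{ABBS}), an accessible point is accessible through some prime end $[\{F^k_j\}_j]$ with singleton impression $\{z_k\}$, and moreover we may choose the chain $\{F^k_j\}_j$ so that its tail lies in $E_{k+1}\subset E_k$ — indeed the accessing curve eventually lies in $E_{k+1}$, so for large $j$ we have $F^k_j$ contained in the connected component of a small Mazurkiewicz ball around a tail point of the curve, which by Lemma~\ref{lem:single} sits inside $F^k_j$ and, being connected with a point in $E_{k+1}$ and small diameter, lies in $E_k$ by condition~(b) of the chain. Thus $[\{F^k_j\}_j]\in\bdy_{SP}\Om$ and, for each $k$, some $F^k_{j}\subset E_k$.

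With this construction in hand, convergence is immediate from the definition of the sequential topology on $\bdy_P\Om$: given $k$, set $n_k=k$; for every $n\ge k$ the chain $\{F^n_j\}_j$ has, by the previous paragraph, some index $j_n$ with $F^n_{j_n}\subset E_n\subset E_k$. This is exactly the condition $[\{F^n_j\}_j]\peto[\{E_k\}_k]$. Hence $[\{E_k\}_k]$ lies in the prime-end closure of $\bdy_{SP}\Om$, proving density.

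The main obstacle is the second step: arranging that the singleton prime end accessing $z_k$ can be taken with a tail inside $E_{k+1}$ (equivalently $E_k$). This requires care because a priori the accessing curve produced by extending a path in $E_{k+1}$ toward $\bdy\Om$ need only eventually lie in $E_{k+1}$ as a curve, whereas the chain $\{F^k_j\}_j$ furnished by the quoted accessibility theorem of~\cite{ABBS} comes with its own neighborhoods, not obviously nested inside $E_{k+1}$. The fix is to use Lemma~\ref{lem:single}: since $I[\{F^k_j\}_j]=\{z_k\}$, each $F^k_j$ contains a connected component of $B(z_k,r_j)\cap\Om$ for suitable $r_j\to0$; the accessing curve has a tail in that component and in $E_{k+1}$, and that component, being connected, of diameter $\le 2r_j$, and meeting $E_{k+1}$, is forced into $E_k$ once $2r_j<\dist_M(\Om\cap\bdy E_k,\Om\cap\bdy E_{k+1})$ — the positivity supplied by condition~(b). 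A secondary technical point is ensuring that $\bdy E_{k+1}$ actually meets $\bdy\Om$ so that accessible points near the chain exist at all; this follows since $\overline{E_{k+1}}\cap\bdy\Om\neq\emptyset$ and $E_{k+1}$ is open, so its boundary meets $\bdy\Om$, and by Lemma~\ref{lem:sep3} the relevant connected component of $E_{k+1}$ near the impression still reaches $\bdy\Om$, providing a genuine accessible point to aim the curve at.
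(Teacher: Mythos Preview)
Your overall strategy---produce an accessible boundary point near the chain, take the associated singleton prime end, and show convergence via condition~(b)---is the same as the paper's. But the execution has a real gap at what you call step~3: you never justify that a rectifiable curve in $E_{k+1}$ can be made to \emph{access} a point of $\bdy\Om$. Rectifiable connectedness of $E_{k+1}$ only connects points \emph{inside} $E_{k+1}$; the Arzel\`a--Ascoli limit of inner geodesics toward a point of $\overline{E_{k+1}}\cap\bdy\Om$ may well have infinite length, and its ``limit set'' need only lie in $\bdy E_{k+1}$, which can meet $\Om\setminus E_{k+1}$ rather than $\bdy\Om$ (this is exactly the difficulty that makes Theorem~\ref{main1} nontrivial). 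So the claim that the extracted arc ``accesses some $z_k\in\bdy\Om$ while eventually staying inside $E_{k+1}$'' is unsupported. Your appeal to Lemma~\ref{lem:single} is also in the wrong direction: that lemma gives a component of a small ball \emph{contained in} $F^k_j$, not that $F^k_j$ has small diameter; what you actually need is to choose the representative chain $\{F^k_j\}_j$ itself to have $\diam F^k_j\le 2/j$ (take $F^k_j$ to be the component of $B(z_k,1/j)\cap\Om$ containing the curve's tail), and then your condition-(b) argument would go through---\emph{provided} the curve's tail is in $E_{k+1}$, which is the missing step.

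The paper avoids this entirely by invoking Assumption~\ref{Xgeo} (that $X$ may be taken geodesic). It first replaces $\{E_k\}_k$ by an equivalent chain with $E_n\subset N(I,\tfrac1n)$, picks $x_n\in E_n$, sets $R_n=\dist(x_n,X\setminus\Om)\le\tfrac1n$, and takes a geodesic $\gamma_n$ from $x_n$ to a nearest point $y_n\in\bdy\Om$; this geodesic lies in $B(x_n,R_n)\subset\Om$ but is \emph{not} required to stay in $E_n$, nor is $y_n$ required to lie in $\overline{E_n}$. The singleton prime ends $[\{F^n_k\}_k]$ with impression $\{y_n\}$ are then chosen with $\diam F^n_k\le\tfrac1k$. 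Convergence is argued by contradiction: if $[\{F^n_k\}_k]\not\peto[\{E_k\}_k]$, then for some fixed $K$ and infinitely many $n$ one finds $z\in F^n_k\setminus E_K$; the connected set $\gamma_n\cup F^n_k$ then has diameter $\le\tfrac1n+\tfrac1k$, contains $x_n\in E_{K+1}$, and contains $z\notin E_K$, forcing $\dist_M(\Om\cap\bdy E_K,\Om\cap\bdy E_{K+1})\le\tfrac2n\to0$, contradicting condition~(b). The key simplification is that the short geodesic to the \emph{nearest} boundary point gives both accessibility and a uniform diameter bound for free, so one never needs the accessing curve to remain inside the chain.
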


\begin{proof}
As in Assumption~\ref{Xgeo}, we assume that $X$ is a geodesic space.

Given a prime end $[\{E_k\}_k]\in\bdy_P\Om\sm\bdy_{SP}\Om$, fix a representative chain $\{E_k\}_k$
of $[\{E_k\}_k]$ 
such that $E_n\subset N(I[\{E_k\}_k],\frac{1}{n})$. Choose a sequence $\{x_n\}_n$ in $\Om$ such that
$x_n\in E_n$ for each positive integer $n$.

For each $x_n$, let $R_n=\text{dist}(x_n, X\setminus\Omega)$ 
and pick $y_n\in\overline{B(x_n,R_n)}\cap\bdy\Om$. 
Note that, since $x_n\in N(I[\{E_k\}_k],\frac{1}{n})$, we have that $R_n\leq\frac{1}{n}$.

Since $X$ is a geodesic space and $B(x_n,R_n)\subset\Om$, there is a geodesic $\ga_n:[0,R_n]\to\overline{\Om}$ from 
$x_n$ to $y_n$ such that $\ga_n([0,R_n))\subset B(x_n,R_n)\subset\Om$. 
Therefore, $y_n$ is accessible and there is a prime end 
$[\{F_k^n\}_k]\in\bdy_{SP}\Om$ such that $I[\{F_k^n\}_k]=\{y_n\}$ and $\ga_n$ accesses $y_n$ through 
$[\{F_k^n\}_k]$ (see Definition~\ref{accthrough}). 
Though not relevant at the moment, for future use in the proof of Proposition \ref{ctsres} we note 
that, since $B(x_n,R_n)$ is connected, $d_M(x_n,[\{F_k^n\}_k])=R_n\leq\frac{1}{n}$. Furthermore, we can
choose $F_k^n$ so that $\text{diam}(F_k^n)\le 1/k$.

We now prove that $[\{F_k^n\}_k]\peto[\{E_k\}_k]$. Suppose this is not the case. Then 
there is a positive integer $K$ such that, for each positive integer $n$, there is an integer $j_n\ge n$
so that for each positive integer $k$ we can find a point $z_{j_n}\in F_k^{j_n}\setminus E_K$. 
The choice of $z_{j_n}$ does indeed depend on $k$ as well, but since we next fix a choice of positive integer $k$,
we do not indicate the dependance of $z_{j_n}$ on $k$ in the notation. Indeed, we now
choose $k\ge 2K+2n$. 

On the other hand, for $n\ge 2K$
we have $x_{j_n}\in E_{K+1}\cap\gamma_{j_n}$, and a  set $\beta_{j_n}=\gamma_{j_n}\cup F_k^{j_n}$,
containing $x_{j_n}$ and $z_{j_n}$, with 
diameter $\text{diam}(\beta_{j_n})\le 1/n+1/k\le 2/n$.  We now show that $\beta_{j_n}$ is connected.
We do not claim here that 
$x_{j_n}\in F_k^{j_n}$, but note that a point in $\gamma_{j_n}$ lies in $F_k^{j_n}$, and
a compact subcurve of $\gamma_{j_n}$ therefore connects $x_{j_n}$ to this point. Hence 
$\beta_{j_n}$ is connected.  Thus we have a point
$z_{j_n}\in\beta_{j_n}$ that lies outside $E_K$, and a point $x_{j_n}\in E_{K+1}\cap\beta_{j_n}$. It follows that
$\text{dist}_M(\Om\cap\bdy E_K,\Om\cap\bdy E_{K+1})\le 2/n$ for sufficiently large $n$. Letting $n\to \infty$ we 
obtain that $\text{dist}_M(\Om\cap\bdy E_K,\Om\cap\bdy E_{K+1})=0$,
which violates the definition of a chain. Hence
we know that $[\{F_k^n\}_k]\peto[\{E_k\}_k]$, completing the proof of the theorem.

\end{proof}

The next lemma provides a connection between locally rectifiable curves of infinite length and ends that are, in some
sense, from the side of those curves.

\begin{lem}\label{lem:Ass(a)}
Let $\Om$ be a bounded domain in $X$.
Suppose that $\ga$ is a curve in $\Om$ such that 
\[
  I(\ga):=\bigcap_{n\in\N}\overline{\ga((n,\infty))}\subset\bdy\Om,
\]
and set
\[
E(\ga):=\{[\{F_k\}_k]\in\bdy_E\Om\ :\ \forall k\in\N,\ \exists t_k\text{ such that }\ga([t_k,\infty))\subset F_k\}.
\]
As in Assumption~\ref{St-Assume} below we consider the order relation $\le$ on $E(\ga)$ defined by $x\le y$ if and only if 
$x|y$. Then $E(\ga)$ has a minimal (or, least) element $[\{E_k\}_k]$. Furthermore, for each $[\{F_k\}_k]\in E(\gamma)$
we have $[\{E_k\}_k]$ divides $[\{F_k\}_k]$ and $I[\{E_k\}_k]=I(\ga)$.
\end{lem}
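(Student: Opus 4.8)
The plan is to build the candidate minimal end directly out of the curve $\ga$, show it belongs to $E(\ga)$, and then verify it divides every member of $E(\ga)$. I would start by fixing, for each $k\in\N$, a connected "tube around the tail of $\ga$": since $\ga([k,\infty))$ is connected and its closure meets $\bdy\Om$ (because $I(\ga)\subset\bdy\Om$ and $\Om$ is bounded so $\ga$ cannot escape to infinity in $X$), a neighborhood construction gives acceptable sets. More precisely, I would look at the components of $N(\ga([k,\infty)),\eps)\cap\Om$ containing the tail of $\ga$, but the cleaner choice is to take $E_k$ to be the union of $\ga([k,\infty))$ with a thin Mazurkiewicz-neighborhood, shrinking $\eps=\eps_k$ so that condition~(b) of Definition~\ref{def:end} holds. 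One has to be a little careful: I would first choose the $\eps_k\downarrow 0$ fast enough that $\dist_M(\Om\cap\bdy E_k,\Om\cap\bdy E_{k+1})>0$, which is possible because $\ga([k+1,\infty))$ is at positive $d_M$-distance from the "entrance" of $E_k$ — this uses that $\ga$ is a genuine curve, not an arbitrary set, so a compact subcurve $\ga([k,k+1])$ separates the scales. Then $\{E_k\}_k$ is a chain with $I(\{E_k\}_k)=\bigcap_k\overline{E_k}$; since $\overline{E_k}\subset N(\ga([k,\infty)),\eps_k)^{-}$ and $\eps_k\to0$, the impression reduces to $\bigcap_k\overline{\ga([k,\infty))}=I(\ga)$, giving the last claimed equality. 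Also $\ga([k,\infty))\subset E_k$ by construction, so $[\{E_k\}_k]\in E(\ga)$.

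Next, minimality. Let $[\{F_k\}_k]\in E(\ga)$ be arbitrary; I must produce, for each $k$, an index $j_k$ with $E_{j_k}\subset F_k$. By definition of $E(\ga)$ there is $t_k$ with $\ga([t_k,\infty))\subset F_k$. Now I would use Lemma~\ref{lem:nbhd-bdy}-type reasoning in reverse: having built $E_j$ as a thin tube of radius $\eps_j$ around $\ga([j,\infty))$, I need $\eps_j$ small enough and $j$ large enough that this whole tube sits inside $F_k$. This is where it matters how $E_j$ was constructed — if $E_j$ were just $\ga([j,\infty))$ itself (no thickening), then $E_j\subset\ga([j,\infty))\subset F_k$ for any $j\ge t_k$ would be immediate; the thickening is the only thing that can fail, so the construction of $E_j$ must be arranged so that the component of the $\eps_j$-neighborhood used is the one accessible by pushing $\eps_j\to0$, hence eventually trapped in $F_k$ once $\ga$'s tail is in $F_k$ and $F_k$ is "stable at scale $\eps_j$" in the sense given by condition~(b) for the chain $\{F_k\}_k$. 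I would phrase this as: pick $j$ so large that $j\ge t_k$ and $\eps_j<\tfrac12\dist_M(\Om\cap\bdy F_k,\Om\cap\bdy F_{k+1})$; then any point of $E_j$ is within $d_M$-distance $\eps_j$ of a point of $\ga([j,\infty))\subset F_{k+1}$, and cannot lie outside $F_k$ without forcing a connected set of small $d_M$-diameter joining $\Om\cap\bdy F_k$ to $F_{k+1}$, contradicting the chain condition for $\{F_k\}_k$. Hence $E_j\subset F_k$, so $[\{E_k\}_k]$ divides $[\{F_k\}_k]$.

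Finally, I would note that being divided by $[\{E_k\}_k]$ makes $[\{E_k\}_k]$ the least element of $(E(\ga),\le)$ in the stated order, and in particular it is the unique minimal element; its impression has already been identified as $I(\ga)$. The main obstacle, as flagged above, is the very first step: choosing the thickening radii $\eps_k$ of the tubes so that simultaneously (i) $\{E_k\}_k$ is a legitimate chain — condition~(b) with the Mazurkiewicz distance is the delicate one — and (ii) the tubes are thin enough to be absorbed into the $F_k$'s of an arbitrary competitor. Both are controlled by the same mechanism, namely that a curve has compact subcurves separating consecutive scales, so I expect the proof to hinge on making that quantitative via $d_M$ and on repeatedly invoking Lemma~\ref{lem:sep2} and Lemma~\ref{lem:sep3} to pass between neighborhoods and connected components; properness of $X$ (Assumption~\ref{Xgeo}) is used throughout to guarantee the relevant infima over connected sets are attained or at least positive.
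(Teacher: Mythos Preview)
Your approach is essentially the paper's: define $E_k$ as the connected component of a Mazurkiewicz neighborhood of the tail $\ga((k,\infty))$ that contains that tail, then show this end divides every $[\{F_k\}_k]\in E(\ga)$ by exploiting condition~(b) for $\{F_k\}_k$. The divisibility argument you sketch is exactly right, modulo a small indexing slip: you need $j\ge t_{k+1}$ (so the tail lies in $F_{k+1}$), not merely $j\ge t_k$; the paper avoids this by phrasing the step as a contradiction---if $E_j\not\subset F_k$ for all $j$, one finds points $\ga(t_j)\notin F_{k+1}$ with $t_j\ge j$, so no tail of $\ga$ sits in $F_{k+1}$.

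The ``main obstacle'' you flag, however, is not one. The fixed choice $\eps_k=1/k$ works without any delicate tuning: if $x\in\Om\cap\bdy E_k$ then (since $E_k$ is open and closed in the $N_M$-neighborhood, using local connectedness of $\Om$) $d_M(x,\ga((k,\infty)))\ge 1/k$, while any $y\in\Om\cap\bdy E_{k+1}\subset\overline{E_{k+1}}$ satisfies $d_M(y,\ga((k+1,\infty)))\le 1/(k+1)$; the triangle inequality then gives $d_M(x,y)\ge 1/k-1/(k+1)>0$. So condition~(b) is automatic, and the same fixed radii already make the tubes thin enough for the minimality step. You do not need Lemmas~\ref{lem:sep2}, \ref{lem:sep3}, or properness here; the argument is purely combinatorial once $d_M$ is used.
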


\begin{proof}
For each positive integer $k$ let $E_k$ denote the 
connected component of the set 
$N_M(\ga((k,\infty)),1/k)\cap\Om$ that contains the tail-end $\ga((k,\infty))$ of $\ga$. 
We will show that the end corresponding to the chain $\{E_k\}_k$ should be a 
minimal end in $E(\ga)$.

It is easily seen that $[\{E_k\}_k]\in E(\ga)$. So it suffices to show that whenever $[\{F_k\}_k]\in E(\ga)$,
the end $[\{E_k\}_k]$ divides $[\{F_k\}_k]$. To do so, let $[\{F_k\}_k]\in E(\ga)$.  
We want to show that given a positive integer $k$ there is a positive
integer $j_k$ such that $E_{j_k}\subset F_k$.

Suppose that the above is not true. Then for each positive integer $j$ the set $E_j\setminus F_k$ is non-empty.
By the construction of $E_j$,
for any $x\in E_j\sm F_k$ there is a real number $t_x\in[j,\infty)$ with $d_M(x,\ga(t_x))<1/j$. 
Note that by the definition of chains, $\dist_M(\bdy F_k\cap\Om,\bdy F_{k+1}\cap\Om)>0$. So we can
choose a positive integer $J$ such that
$1/J<\dist_M(\bdy F_k\cap\Om,\bdy F_{k+1}\cap\Om)$. Consider $j\geq J$, and fix $x_j\in E_j\sm F_k$,
and set $t_j:=t_{x_j}$. We then have 
\[
  d_M(x_j,\gamma(t_j))<1/j\le 1/J<\dist_M(\bdy F_k\cap\Om,\bdy F_{k+1}\cap\Om),
\]
It follows now from the fact that $x_j\not\in F_k$ that $\gamma(t_j)\not\in F_{k+1}$. Consequently, for each
positive integer $j>J$ we can find a real number $t_j\ge j$ such that $\gamma(t_j)\not\in F_{k+1}$. Thus
no tail end of $\gamma$ can lie in $F_{k+1}$, which violates the fact that 
$[\{F_k\}_k]\in E(\gamma)$.

Hence we can conclude that necessarily there is some positive integer $j_k$ such that $E_{j_k}\subset F_k$, that is, the end
$[\{E_k\}_k]$ divides $[\{F_k\}_k]$, concluding the proof.
\end{proof}



In addition to our previous assumptions on $X$, we also will assume for the
remainder of the paper that the domain $\Om$ fulfills the following property:

\begin{Assumption}\label{St-Assume}
\emph{
For every collection $\mathcal{F}$ of ends that is totally ordered by division such that
$x\le y$ if and only if $x|y$, there is an end $[\{G_k\}_k]$ 
such that $[\{G_k\}_k]\le [\{F_k\}_k]$ for every $[\{F_k\}_k]\in\mathcal{F}$.
}
\end{Assumption}

The above assumption essentially states that we assume that the collection of all ends of $\Om$ 
satisfies the hypotheses of Zorn's lemma. 

Should $\Om$ be a simply connected bounded planar domain, the above condition is seen to hold true. The proof
of this fact there goes through the Riemann mapping theorem; in more general settings it is not clear to us 
whether the above condition automatically holds. However, in many situations this condition is directly
verifiable. If $\bdy_{SP}\Om$ is compact, then by Theorem~\ref{spdense} we know that $\bdy_P\Om=\bdy_{SP}\Om$,
and in this case
the fact that above assumption holds is a consequence of the results
found in~\cite[Section~7]{ABBS}. Indeed, by the results in~\cite{ABBS}, it follows that given an end $[\{E_k\}_k]$,
every point in $I[\{E_k\}_k]$ is accessible through $[\{E_k\}_k]$ by rectifiable curves, and hence a prime end 
from $\bdy_{SP}\Om$ divides $[\{E_k\}_k]$.

Under the assumption of~\ref{St-Assume}, 
we have the following fact about $\overline{\Om}^E$.

\begin{thm}\label{thm-E1}
Suppose that $\Om$ is a bounded domain
satisfying the assumption given in Definition~\ref{St-Assume}.
Let $[\{E_k\}_k]$ be an end of $\Om$. Then there is a prime end $[\{F_k\}_k]$ of $\Om$ that divides
$[\{E_k\}_k]$.
\end{thm}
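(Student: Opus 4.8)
The strategy is a direct application of Zorn's lemma, exactly as Assumption~\ref{St-Assume} was tailored to permit. Given the end $[\{E_k\}_k]$, consider the collection
\[
\mathcal{D}:=\{[\{G_k\}_k]\in\bdy_E\Om\ :\ [\{G_k\}_k]\text{ divides }[\{E_k\}_k]\},
\]
partially ordered by the division relation $\le$ (where $x\le y$ iff $x|y$). This set is nonempty since $[\{E_k\}_k]\in\mathcal{D}$. A \emph{minimal} element of $\mathcal{D}$ with respect to $\le$ would be an end that is divided only by itself among the ends dividing $[\{E_k\}_k]$; one then needs the small additional observation that such an end is in fact divided only by itself among \emph{all} ends, i.e.\ is a prime end, which follows because divisibility is transitive (if $[\{H_k\}_k]$ divides a minimal $[\{F_k\}_k]\in\mathcal{D}$ then $[\{H_k\}_k]$ divides $[\{E_k\}_k]$, so $[\{H_k\}_k]\in\mathcal{D}$, hence by minimality $[\{F_k\}_k]$ divides $[\{H_k\}_k]$, so they are equivalent).

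So the real content is to verify the hypothesis of Zorn's lemma for $\mathcal{D}$: every chain (totally ordered subcollection) in $\mathcal{D}$ has a lower bound in $\mathcal{D}$. Let $\mathcal{F}\subset\mathcal{D}$ be totally ordered by $\le$. Applying Assumption~\ref{St-Assume} to $\mathcal{F}$ gives an end $[\{G_k\}_k]$ with $[\{G_k\}_k]\le[\{F_k\}_k]$ for every $[\{F_k\}_k]\in\mathcal{F}$. It remains to check that $[\{G_k\}_k]\in\mathcal{D}$, i.e.\ that $[\{G_k\}_k]$ divides $[\{E_k\}_k]$. If $\mathcal{F}$ is nonempty, pick any $[\{F_k\}_k]\in\mathcal{F}$; then $[\{G_k\}_k]$ divides $[\{F_k\}_k]$, which divides $[\{E_k\}_k]$, so by transitivity $[\{G_k\}_k]$ divides $[\{E_k\}_k]$; if $\mathcal{F}=\emptyset$ the element $[\{E_k\}_k]$ itself is a lower bound. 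Hence $\mathcal{D}$ satisfies the hypotheses of Zorn's lemma and possesses a minimal element $[\{F_k\}_k]$, which by the previous paragraph is the desired prime end dividing $[\{E_k\}_k]$.

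\textbf{Main obstacle.} The only nonroutine points are bookkeeping ones: first, Zorn's lemma in the form one typically quotes produces a \emph{maximal} element of a poset in which every chain has an \emph{upper} bound, so one should either phrase things with the reversed order $\ge$ (so "$[\{G_k\}_k]$ divides $[\{F_k\}_k]$" reads as "$[\{G_k\}_k]\le[\{F_k\}_k]$" and one seeks a minimal element) or apply the dual form directly; the excerpt's own phrasing "$x\le y$ if and only if $x|y$" already fixes this convention, so it suffices to be consistent. Second, one must be slightly careful that $\le$ as defined is a genuine partial order on \emph{ends} (equivalence classes of chains) rather than merely a preorder on chains — but this is exactly the content of the remarks following Definition~\ref{def:divide}, namely that divisibility descends to ends and that mutual divisibility is equality of ends. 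Neither point presents a genuine difficulty; the theorem is essentially a formal consequence of Assumption~\ref{St-Assume} together with transitivity of division.
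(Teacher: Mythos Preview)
Your proof is correct and follows essentially the same approach as the paper: both apply Zorn's lemma to the collection of ends dividing $[\{E_k\}_k]$, using Assumption~\ref{St-Assume} to supply lower bounds for chains. Your version is in fact slightly more careful than the paper's, explicitly handling the empty chain and spelling out via transitivity why a minimal element of $\mathcal{D}$ is prime among \emph{all} ends rather than just within $\mathcal{D}$.
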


\begin{proof}
Consider the set $\mathcal{E}$ of ends that divide $[\{E_k\}_k]$, ordered by division. If this set contains only 
$[\{E_k\}_k]$, then $[\{E_k\}_k]$ is a prime end.

Assume $\mathcal{E}$ has more than one element. Let $\mathcal{F}$ be a totally ordered subset 
of $\mathcal{E}$, indexed by a corresponding totally 
ordered set $A$. By the assumption given in the theorem, there is an element 
$[\{G_k\}_k]$ that 
divides all the elements of $\mathcal{F}$. Since each of these elements divides $[\{E_k\}_k]$ in turn, 
$[\{G_k\}_k]\Big|[\{E_k\}_k]$. Thus, $[\{G_k\}_k]\in\mathcal{E}$, satisfying the conditions for the use of 
Zorn's Lemma. Thus $\mathcal{E}$ has a minimal element, and this minimal element is necessarily a prime end.
\end{proof}

Finally, we prove the following consequence of the assumptions made on $\Om$ earlier in  
this section. This will be integral to our results in the next section.

\begin{lem}\label{lem:pega}
Let $\Om$ be a bounded domain satisfying the assumption given in Defintion~\ref{St-Assume},
and let $\ga$ be a curve in $\Om$ such that $\ds\bigcap_{n\in\N}\overline{\ga((n,\infty))}\subset\bdy\Om.$ 
Then there is a prime end $[\{A_k\}_k]$ such that $[\{A_k\}_k]\Big|[\{F_k\}_k]$ for every 
$[\{F_k\}_k]\in E(\ga)$, and $A_k\cap\ga\not=\emptyset$ for every integer $k$.
\end{lem}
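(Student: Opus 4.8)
The plan is to combine Lemma~\ref{lem:Ass(a)} with Theorem~\ref{thm-E1}, and then to check that the minimal element produced in that process can still be represented by a chain each of whose sets meets $\ga$. The starting point is the chain $\{E_k\}_k$ constructed in the proof of Lemma~\ref{lem:Ass(a)}, namely $E_k$ is the connected component of $N_M(\ga((k,\infty)),1/k)\cap\Om$ containing the tail $\ga((k,\infty))$. By that lemma, $[\{E_k\}_k]\in E(\ga)$, it divides every element of $E(\ga)$, and $I[\{E_k\}_k]=I(\ga)$. Note already that $E_k\cap\ga\neq\emptyset$ for every $k$, since $\ga((k,\infty))\subset E_k$.

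First I would apply Theorem~\ref{thm-E1} to the end $[\{E_k\}_k]$: under Assumption~\ref{St-Assume} there is a prime end $[\{A_k\}_k]$ that divides $[\{E_k\}_k]$. Since division is transitive and $[\{E_k\}_k]$ divides every $[\{F_k\}_k]\in E(\ga)$, the prime end $[\{A_k\}_k]$ divides every $[\{F_k\}_k]\in E(\ga)$ as well. This gives the first assertion for free. The remaining issue is purely a matter of choosing a good representative chain for $[\{A_k\}_k]$ so that $A_k\cap\ga\neq\emptyset$ for all $k$. For this I would inspect the proof of Theorem~\ref{thm-E1} more carefully: the prime end it produces is a minimal element of the family $\mathcal E$ of ends dividing $[\{E_k\}_k]$, obtained via Zorn's lemma, but the point is that such a minimal end is \emph{equivalent} to $[\{E_k\}_k]$ precisely when $[\{E_k\}_k]$ is itself prime, and in general it is a strict divisor. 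Either way, since $[\{A_k\}_k]$ divides $[\{E_k\}_k]$, for each $k$ there is $j_k$ with $A_{j_k}\subset E_k$; passing to the subsequence $\{A_{j_k}\}_k$ (which represents the same prime end) we may assume $A_k\subset E_k$ for every $k$.

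Now the key observation: because $[\{A_k\}_k]$ divides \emph{every} element of $E(\ga)$ and $[\{E_k\}_k]\in E(\ga)$ is the least such end, minimality forces $[\{A_k\}_k]$ and $[\{E_k\}_k]$ to be equivalent, i.e.\ they also satisfy $[\{E_k\}_k]\big|[\{A_k\}_k]$. (Indeed $[\{A_k\}_k]$ divides $[\{E_k\}_k]$, and by Lemma~\ref{lem:Ass(a)} applied to the end $[\{A_k\}_k]$ — which is easily checked to lie in $E(\ga)$, since $A_k\subset E_k$ and each $E_k$ eventually contains a tail of $\ga$ forces, after the division, a tail of $\ga$ into each $A_k$... ) — here I would instead argue directly that $[\{A_k\}_k]\in E(\ga)$: given $k$, we have $A_{j}\subset E_k$ eventually, hence $\ga((t,\infty))\subset E_k$ for $t$ large; but to get $\ga((t,\infty))\subset A_k$ I must use that $[\{A_k\}_k]$ divides the end $[\{E'_m\}_m]\in E(\ga)$ built from $\ga$ at scale $1/m$ starting at level $\ge$ some large $N$, whose sets are $\subset A_k$ for $m$ large, and these $E'_m$ contain tails of $\ga$. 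So $\ga$ eventually enters $A_k$, i.e.\ $[\{A_k\}_k]\in E(\ga)$. Then $[\{E_k\}_k]$ divides $[\{A_k\}_k]$, so the two ends are equivalent. In particular we may replace the chain $\{A_k\}_k$ by $\{E_k\}_k$ itself as a representative of the prime end $[\{A_k\}_k]$, and for this representative $A_k:=E_k$ we have $A_k\cap\ga\supset\ga((k,\infty))\neq\emptyset$, as required.

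\textbf{The main obstacle} I anticipate is exactly the last bookkeeping step: showing $[\{A_k\}_k]\in E(\ga)$, i.e.\ that the abstractly-produced prime end really is ``from the side of'' $\ga$ in the strong sense that tails of $\ga$ eventually lie in each $A_k$, rather than merely that $A_k$ meets $\ga$. The clean way around it is to avoid producing the prime end abstractly at all: apply Theorem~\ref{thm-E1} to get \emph{some} prime divisor $[\{A_k\}_k]$ of $[\{E_k\}_k]$, observe it divides all of $E(\ga)$, and then simply re-represent it by $\{E_k\}_k$ after verifying equivalence via the minimality statement of Lemma~\ref{lem:Ass(a)}. Once one commits to using $\{E_k\}_k$ as the working representative, the condition $A_k\cap\ga\neq\emptyset$ is immediate and the whole argument is short.
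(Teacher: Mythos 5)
Your opening moves coincide with the paper's: take the minimal element $[\{E_k\}_k]$ of $E(\ga)$ from Lemma~\ref{lem:Ass(a)}, get a prime divisor from Theorem~\ref{thm-E1}, and conclude by transitivity that it divides every element of $E(\ga)$. That first assertion is fine. The gap is in the second half. You claim that minimality forces the prime end $[\{A_k\}_k]$ to be equivalent to $[\{E_k\}_k]$, so that you may re-represent it by the chain $\{E_k\}_k$ itself. If that were true, the minimal element of $E(\ga)$ would always be prime, and the paper explicitly warns (in the remark immediately after the statement) that the prime end produced cannot in general be expected to lie in $E(\ga)$; the harmonic comb is the counterexample. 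Your attempted verification that $[\{A_k\}_k]\in E(\ga)$ also reverses the direction of division: $[\{A_k\}_k]$ dividing $[\{E'_m\}_m]$ gives $A_{j_m}\subset E'_m$, not $E'_m\subset A_k$, so you cannot push a tail of $\ga$ from $E'_m$ into $A_k$. This is why the lemma only asserts $A_k\cap\ga\neq\emptyset$ rather than membership in $E(\ga)$.

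The case your argument does not cover --- the minimal element $[\{G_k\}_k]$ of $E(\ga)$ is not prime and the prime divisor $[\{H_k\}_k]$ from Theorem~\ref{thm-E1} is not in $E(\ga)$ --- is exactly where the work lies. The paper splits into: (a) every $H_k$ meets $\ga$, in which case one simply takes $A_k=H_k$; and (b) some $H_{k_0}$ misses $\ga$ entirely. Ruling out (b) requires a genuine construction: one fattens $H_2$ inside $H_1$ to sets $\widehat H_k$ with positive Mazurkiewicz separation, lets $D_k$ be the component of $G_k\setminus\overline{\widehat H_k}$ containing the tail of $\ga$, verifies that $\{D_k\}_k$ is a chain belonging to $E(\ga)$ and dividing $[\{G_k\}_k]$, concludes $[\{D_k\}_k]=[\{G_k\}_k]$ by minimality, and then derives a contradiction because $[\{H_k\}_k]$ cannot divide $[\{D_k\}_k]$. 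Your proposal has no substitute for this step, and without it the conclusion $A_k\cap\ga\neq\emptyset$ is not established.
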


One cannot in general expect this prime end to be in $E(\ga)$, as the harmonic comb example shows.
Thus the best possible link the prime end has to $\ga$ is the condition that 
$A_k\cap\ga\not=\emptyset$ for every integer $k$.

\begin{proof}
By Lemma~\ref{lem:Ass(a)}, 
we know that $E(\ga)$ has a minimal element $[\{G_k\}_k]$ and that 
$[\{G_k\}_k]\Big|[\{F_k\}_k]$ for every $[\{F_k\}_k]\in E(\ga)$.

If $[\{G_k\}_k]$ happens to be a prime end, then set $[\{A_k\}_k]=[\{G_k\}_k]$ and the proof would be complete. 
If not, we may use Theorem~\ref{thm-E1} to obtain a prime end $[\{H_k\}_k]$ that divides $[\{G_k\}_k]$. Since 
$[\{G_k\}_k]$ is minimal in $E(\ga)$, it must be the case that $[\{H_k\}_k]\not\in E(\ga)$. 
Now we have one of two following possibilities:
\begin{enumerate}
\item For every $k\in\N$, there is a positive real number $t_k$ such that $\ga(t_k)\in H_k$.
\item There exists a positive integer $k_0$ such that $H_{k_0}\cap\ga=\emptyset$.
\end{enumerate}

If $[\{H_k\}_k]$ behaves as in (a), we simply take $[\{A_k\}_k]=[\{H_k\}_k]$ and 
the proof is complete. We now show 
that possibility~(b) does not occur. 
We may also, without loss of generality, suppose that $\overline{H_k}\cap\Om=H_k$.

Assume that $[\{H_k\}_k]$ behaves as in (b). For simplicity, we may take $k_0=1$. Then, define
\[
m_H:=\dist_M(\bdy H_1\cap\Om,\bdy H_2\cap\Om)
\]
and
\[
\widehat{H}_k:=\bigg(\bigcup_{x\in H_2}B_M(x,(1-\frac{1}{k+1})m_H)\bigg)\cap H_1.
\]
Then $H_2\subset \widehat{H}_k\subset \widehat{H}_{k+1}\subset H_1$ and
\[
\dist_M(\bdy\widehat{H}_k\cap\Om,\bdy\widehat{H}_{k+1}\cap\Om)>0
\]for every $k\in\N$. Because $\gamma$ does not intersect $H_1$ and $H_1$ is relatively closed in $\Om$ by
assumption, we have that $\gamma\subset\Om\setminus\overline{H_1}$.

Finally, we define $D_k$ as the connected component of $G_k\sm\overline{\widehat{H}_k}$ that contains 
the tail end of $\gamma$ inside $G_k$.
Since $\overline{\widehat{H}_k}$ contains no points of $\ga$, we know that this component exists. 
By construction, $D_k\supset D_{k+1}$ and $\bigcap_{k\in\N}\overline{D_k}\subset \bdy\Om$. As before, 
we need only show that $\dist_M(\bdy D_k\cap\Om,\bdy D_{k+1}\cap\Om)>0$ for all $k\in\N$ to establish 
that $\{D_k\}_k$ is a chain.
Let
\[
M_k=\min\{\dist_M(\bdy \widehat{H}_k\cap\Om,\bdy \widehat{H}_{k+1}\cap\Om), \dist_M(\bdy G_k\cap\Om,\bdy G_{k+1}\cap\Om)\}.
\] 
Note that $M_k>0$. Take $x\in\bdy D_k\cap\Om$ and $y\in\bdy D_{k+1}\cap\Om$ and consider the following cases.

\noindent{\bf Case 1:} $x\in\bdy G_k\cap\Om$ and $y\in\bdy G_{k+1}\cap\Om$. In this case, we 
immediately have that $d_M(x,y)\geq M_k$.

\noindent{\bf Case 2:} $x\in\bdy G_k\cap\Om$ and $y\in\bdy \widehat{H}_{k+1}\cap\Om$, but 
$y\not\in\bdy G_{k+1}\cap\Om$. Here, it must be the case that $y\in G_{k+1}$. So $d_M(x,y)\geq M_k$.

\noindent{\bf Case 3:} $x\in\bdy \widehat{H}_k\cap\Om$ and $y\in\bdy \widehat{H}_{k+1}\cap\Om$.
As in Case 1, we immediately have that $d_M(x,y)\geq M_k$.

\noindent{\bf Case 4:} $x\in\bdy \widehat{H}_k\cap\Om$ and $y\in\bdy G_{k+1}\cap\Om$, but 
$y\not\in\bdy \widehat{H}_{k+1}\cap\Om$. Here, it must be that $y\in G_k\sm\overline{\widehat{H}_{k+1}}$. 
So $d_M(x,y)\geq M_k$.

\noindent{\bf Case 5:} $x\not\in\Om\cap(\bdy G_k\cup\bdy\widehat{H}_k)$
or $y\not\in\cap(\bdy G_{k+1}\cup\bdy\widehat{H}_{k+1})$. We will focus on the first possibility, the second
being handled in a very similar manner. Since $x\not\in\Om\cap(\bdy G_k\cup\bdy\widehat{H}_k)$, it follows
that $x$ is in the interior of $G_k$, and hence in the interior of $G_k\setminus\overline{\widehat{H}_k}$.
It follows that for sufficiently small $r>0$ the connected set $B_M(x,r)\subset G_k\setminus\overline{\widehat{H}_k}$,
which then means that $B_M(x,r)\subset D_k$, violating the fact that $x\in\bdy D_k$. Hence this case is not possible.

The above argument allows us to conclude that $\dist_M(\bdy D_k\cap\Om,\bdy D_{k+1}\cap\Om)>0$, and so 
$\{D_k\}_k$ is a chain and $[\{D_k\}_k]$ is an end. By construction, $[\{D_k\}_k]\Big|[\{G_k\}_k]$ and 
$[\{D_k\}_k]\in E(\ga)$.  Because $[\{G_k\}_k]$ divides each end in $E(\gamma)$, and so 
$[\{D_k\}_k]=[\{G_k\}_k]$.
However, by construction $[\{H_k\}_k]$ does not divide $[\{D_k\}_k]$, which
violates the choice of $[\{H_k\}_k]$ as a prime end that divides $[\{G_k\}_k]$. 
Hence the alternative~(b) cannot occur. 
This completes the proof of the lemma.
\end{proof}

\section{Prime ends are from all sides.}
\label{sect-3}

The goal of this section is to show that if $V\subset\Om$ is an open connected set such that
$\bdy V\cap\bdy\Om$ is non-empty, then there is a prime end from the side of $V$. To do so we 
employ the inner metric $\dinn^V$ (see Definition \ref{defn:dinn}). 

For each $\eps>0$ let $V_\eps:=\{x\in V\, :\, \dist(x,X\setminus V)>\eps\}$, and for a 
(locally rectifiable) curve $\ga$ of infinite length in $X$, let 
\[
    I(\ga):=\bigcap_{n\in\N}\overline{\ga((n,\infty))}.
\]
Note that if $\ga\subset\overline{V}$, then
$I(\ga)$ is a connected compact subset of $\overline{V}$.

\begin{lem}\label{lem:leave-compacts}
 Let  $V\subset\Om$ be an open connected set and suppose that $x_\infty\in \bdy V\cap\bdy\Om$. Let
 $\{x_k\}_k$ be a sequence of points in $V$ such that $\lim_kx_k=x_\infty$, and let $x_0\in V$. Suppose that for each
 positive integer $k$ the $\dinn^V$-geodesic $\ga_{x_0,x_k}^V$ does not intersect $\bdy\Om$. Then 
 there is a curve $\ga:[0,\infty)\to\overline{V}$ such that $\ga$ is the local uniform limit of a 
 subsequence of the sequence of curves $\{\ga_{x_0,x_k}^V\}$. Furthermore, if $\ga$ has infinite length, 
 then $I(\ga)\subset\bdy V$.
\end{lem}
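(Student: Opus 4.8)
The plan is to obtain $\ga$ by an Arzel\`a--Ascoli extraction, and then, in the infinite--length case, to exclude points of $V$ from $I(\ga)$ by a shortcut argument exploiting that the inner metric $\dinn^V$ and the ambient metric $d$ are biLipschitz equivalent near interior points of $V$.

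\emph{Construction of $\ga$.} First I would parametrize each $\dinn^V$-geodesic $\ga_k:=\ga_{x_0,x_k}^V$ by its standard parametrization (Remark~\ref{curve-param}), so that $\ga_k\colon[0,\infty)\to\overline V$ is $1$-Lipschitz, $\ga_k(0)=x_0$, and $\ga_k$ is constant on $[\ell_k,\infty)$ with $\ell_k:=\ell(\ga_k)=\dinn^V(x_0,x_k)<\infty$. Since $\Om$ is bounded and $X$ is proper, $\overline V$ is compact; hence on each interval $[0,n]$ the family $\{\ga_k\}$ is equi-Lipschitz and pointwise relatively compact, so Arzel\`a--Ascoli applies. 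A diagonal extraction over $n\in\N$ produces a subsequence of $\{\ga_k\}$ converging uniformly on every compact subset of $[0,\infty)$; its limit $\ga\colon[0,\infty)\to\overline V$ is $1$-Lipschitz and starts at $x_0$. This gives the first assertion; this part uses neither the hypothesis that the $\ga_k$ avoid $\bdy\Om$ nor that $\ga$ be of infinite length.

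\emph{The impression avoids $V$.} Assume now $\ga$ has infinite length; since $\ga\subset\overline V$, proving $I(\ga)\subset\bdy V$ is the same as proving $I(\ga)\cap V=\emptyset$. Suppose, for a contradiction, that $z\in I(\ga)\cap V$. Fix $\rho$ with $0<\rho<\dist(z,X\setminus V)/C_q$; then $B(z,\rho)\subset V$ and, by the biLipschitz equivalence of $d$ and $\dinn^V$ on balls about interior points of $V$ recalled after Definition~\ref{defn:dinn}, $\dinn^V(a,b)\le C_q\,d(a,b)$ whenever $a,b\in B(z,\rho)$. Because $z\in I(\ga)=\bigcap_{n}\overline{\ga((n,\infty))}$, I may choose $T_1$ with $d(\ga(T_1),z)<\rho/4$ and then $T_2>T_1+2C_q\rho$ with $d(\ga(T_2),z)<\rho/4$. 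Passing to the chosen subsequence, uniform convergence on $[0,T_2]$ gives, for all large $k$, $d(\ga_k(T_i),\ga(T_i))<\rho/4$, hence $\ga_k(T_1),\ga_k(T_2)\in B(z,\rho/2)$. Moreover $T_i\le\ell_k$ for large $k$: were $T_2>\ell_k$, then $\ga_k(T_2)=x_k$ and $d(x_k,z)<\rho/2$, impossible for large $k$ since $x_k\to x_\infty\in X\setminus V$ and $d(x_\infty,z)\ge\dist(z,X\setminus V)>C_q\rho\ge\rho$. So $\ga_k(T_1)$ and $\ga_k(T_2)$ are genuine points on the $\dinn^V$-geodesic $\ga_k$. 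Using that the restriction of a $\dinn^V$-geodesic to a subinterval is again length--minimizing for $\dinn^V$, I get $\dinn^V(\ga_k(T_1),\ga_k(T_2))=T_2-T_1>2C_q\rho$; but $\ga_k(T_1),\ga_k(T_2)\in B(z,\rho/2)\subset B(z,\rho)$ and the biLipschitz bound give $\dinn^V(\ga_k(T_1),\ga_k(T_2))\le C_q\,d(\ga_k(T_1),\ga_k(T_2))\le C_q\rho$. These are incompatible, so $z\notin V$, proving $I(\ga)\subset\bdy V$.

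\emph{Anticipated obstacle.} The delicate point is the equality $\dinn^V(\ga_k(s),\ga_k(t))=t-s$ for $0\le s\le t\le\ell_k$: it is transparent for the length--minimizing curves in $V$ whose uniform limit is $\ga_k$, but it must be carried through that limit, and $\ga_k(s)$ or $\ga_k(t)$ could a priori lie on $\bdy V$, where $\dinn^V$ need not be continuous with respect to $d$. I would handle this by first perturbing $s,t$ slightly so the relevant points of the approximating curves lie strictly inside $V$ (where $\dinn^V\le C_q d$ locally), applying the restriction property there, and then shrinking the perturbation; everything else is routine.
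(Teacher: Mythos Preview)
Your proposal is correct and follows essentially the same line as the paper's proof: Arzel\`a--Ascoli for existence, then a contradiction via the local biLipschitz equivalence of $d$ and $\dinn^V$ near interior points, exploiting that the curves are $\dinn^V$-geodesics. The only cosmetic difference is that the paper works directly on the limit curve $\ga$ (asserting that each subcurve of $\ga$ is a $\dinn^V$-geodesic between its endpoints, then bounding $\dinn^V(x_0,y_i)$ from above and below), whereas you pass back to the approximating geodesics $\ga_k$ to run the shortcut contradiction; your ``anticipated obstacle'' paragraph is exactly the justification the paper's terser claim about subcurves tacitly relies on.
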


\begin{rem}\label{rem:single-access}
Note that if there are two points $z,w\in V$ such that the $\dinn^V$-geodesic connecting $z$ to $w$ intersects
$\bdy\Om$, then, because this geodesic has finite length (with respect to the metric $d$), it follows that
there is a point $x_0\in\bdy V\cap\bdy\Om$ that is accessible from the side of $V$. See Definition~\ref{accside}
for the definition of ``accessibility from the side" of $V$. As a consequence, if such points $z,w$ exist, then
there is a prime end from the side of $V$, and we can choose this prime end from the class $\bdy_{SP}\Om$.
\end{rem}

\begin{proof}[Proof of Lemma~\ref{lem:leave-compacts}]
The existence of the curve $\ga$ is easily given by applying the Arzel\`{a}-Ascoli theorem to the 
equibounded (since $\Om$ is bounded) equicontinuous (since these curves are $1$-Lipschitz maps with respect
to the underlying metric $d$) family 
$\{\ga_{x_0,x_k}^V\}$. It is also clear that $I(\ga)\subset\overline{V}$ and that each subcurve of 
$\ga$ is a $\dinn^V$-geodesic between its endpoints. Demonstrating that when $\ga$ has infinite length
$I(\ga)\subset\bdy V$ requires  slightly more work.

We argue by contradiction. Suppose that there is a point $y\in I(\ga)\cap V$, 
and pick a sequence $\{y_i\}$ with $y_i\to y$ and 
$y_i\in\ga([i,\infty))$ for each $i$. 
Since $V$ is open, there is a sufficiently small 
neighborhood of $y$ within $V$ such that the metrics $d$ and $\dinn^V$ are biLipschitz equivalent
inside this neighborhood. 
Thus, $y_i$ converges to $y$ with respect to $\dinn^V$, requiring that $\dinn^V(y_i,y)$ be uniformly 
bounded by some $M< \infty$.

Recall that $\dinn^V(x_0,y)$ must be finite; denote this quantity by $N$. By the triangle inequality, 
\[
\dinn^V(x_0,y_i)\leq \dinn^V(x_0,y)+\dinn^V(y_i,y)\leq N+M. 
\]
Since $M$ and $N$ are independent of $i$, 
we have that $\dinn^V(x_0,y_i)$ is uniformly bounded. But we picked $y_i$ such that $y_i\in\ga([i,\infty))$, and since 
$\ga$ is locally a geodesic
with infinite length, $\dinn^V(x_0,y_i)\geq i$ for each $i$. But this contradicts the above 
bound on $\dinn^V(x_0,y_i)$. Thus, we have that $y\not\in V$, that is,
$I(\ga)\subset \bdy V$.
\end{proof}

\begin{thm}\label{main1}
Let $\Om$ be a bounded connected open set satisfying the condition given in
Assumption~\ref{St-Assume}, and $V\subset\Om$ be an open, connected set. If $\bdy\Om\cap\bdy V$
is non-empty, then there is a prime end of $\Om$ from the side of $V$.
\end{thm}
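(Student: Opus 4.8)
The plan is to reduce the theorem to Lemma~\ref{lem:pega} by producing a curve $\gamma$ in $\Omega$ that ``goes out to $\partial\Omega$ along the side of $V$'' and whose impression lies in $\partial\Omega$. Since $\bdy\Om\cap\bdy V\ne\emptyset$, fix $x_\infty\in\bdy\Om\cap\bdy V$ and a sequence $\{x_k\}_k$ in $V$ with $x_k\to x_\infty$; also fix a base point $x_0\in V$. There are two cases. If for some pair of points in $V$ the $\dinn^V$-geodesic connecting them meets $\bdy\Om$, then by Remark~\ref{rem:single-access} we already get a prime end from the side of $V$ (indeed from $\bdy_{SP}\Om$), and we are done. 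So assume from now on that no $\dinn^V$-geodesic between points of $V$ meets $\bdy\Om$; in particular each $\ga_{x_0,x_k}^V$ avoids $\bdy\Om$, so Lemma~\ref{lem:leave-compacts} applies and yields a curve $\ga:[0,\infty)\to\overline V$ that is a local uniform limit of a subsequence of $\{\ga_{x_0,x_k}^V\}$.

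Next I would split according to whether $\ga$ has finite or infinite length. If $\ga$ has finite length, then (after the standard parametrization of Remark~\ref{curve-param}) $\ga$ is a finite-length curve in $\overline V$ terminating at some point $z_0\in\overline V$; since the $\ga_{x_0,x_k}^V$ have lengths tending to a finite limit while $x_k\to x_\infty\in\bdy\Om$, the endpoint $z_0$ must be $x_\infty\in\bdy\Om$, and $\ga([0,\ell(\ga)))\subset\Om$ because $\ga$ avoids $\bdy\Om$ by the finite-length, geodesic-limit structure and $\overline V\subset\overline\Omega$. Thus $x_\infty$ is accessible from the side of $V$ in the sense of Definition~\ref{accside}, and by the discussion after Definition~\ref{accthrough} (every accessible boundary point is accessed through a prime end in $\bdy_{SP}\Om$) we obtain a prime end from the side of $V$. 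If instead $\ga$ has infinite length, then by the last assertion of Lemma~\ref{lem:leave-compacts} we have $I(\ga)\subset\bdy V\subset\overline V\subset\overline\Omega$; moreover $\ga$ is a local geodesic of infinite length in the bounded set $\overline\Omega$, so every point of $I(\ga)$ must lie in $\bdy\Omega$ — if a point $w\in I(\ga)$ were in $\Omega$, then (as in the proof of Lemma~\ref{lem:leave-compacts}) points of the tail of $\ga$ near $w$ would have $\dinn^\Omega$-distance from $x_0$ that is both unbounded and bounded. Hence $I(\ga)\subset\bdy\Omega$, so the hypothesis of Lemma~\ref{lem:pega} is met.

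Now apply Lemma~\ref{lem:pega} to this $\ga$: there is a prime end $[\{A_k\}_k]$ with $A_k\cap\ga\ne\emptyset$ for every $k$. Finally I would check that $[\{A_k\}_k]$ is from the side of $V$, i.e. that $A_k\cap V\ne\emptyset$ for every $k$. Since $\ga$ is obtained as a local uniform limit of curves $\ga_{x_0,x_k}^V$ lying in $V$, every point of $\ga$ lies in $\overline V$, and in fact $\ga((0,\infty))$ is contained in $V$ except possibly at limiting endpoints in $\bdy V$ — more precisely, each compact subcurve of $\ga$ is a $\dinn^V$-geodesic between its endpoints, hence lies in $\overline V$, and avoids $\bdy V\subset\bdy\Omega$ (since $\ga$ avoids $\bdy\Omega$) except in the infinite-length case where only the impression touches $\bdy V$; thus every initial-to-interior portion of $\ga$ lies in $V$. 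Since $A_k$ is open in $\Omega$ and meets $\ga$ at some point $\ga(t_k)$, and since arbitrarily close points along $\ga$ lie in $V$, we get $A_k\cap V\ne\emptyset$. Therefore $[\{A_k\}_k]$ is a prime end of $\Omega$ from the side of $V$, completing the proof.

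I expect the main obstacle to be the bookkeeping in the infinite-length branch: namely, verifying carefully that $I(\ga)\subset\bdy\Omega$ (not merely $\subset\bdy V$), using that $\ga$ is a limit of $\dinn^V$-geodesics rather than $\dinn^\Omega$-geodesics, and then confirming that the prime end supplied by Lemma~\ref{lem:pega} genuinely meets $V$ and not just $\overline V$. The finite-length branch and the Remark~\ref{rem:single-access} branch are comparatively routine, as is invoking Assumption~\ref{St-Assume} implicitly through Lemma~\ref{lem:pega}.
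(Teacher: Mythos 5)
Your overall strategy coincides with the paper's: dispose of the case where some $\dinn^V$-geodesic meets $\bdy\Om$ via Remark~\ref{rem:single-access}, otherwise extract the limit curve $\ga$ from Lemma~\ref{lem:leave-compacts}, verify $I(\ga)\subset\bdy\Om$, and feed $\ga$ to Lemma~\ref{lem:pega}, finishing with the observation that each $A_k$ is open and meets $\ga\subset\overline V$, hence meets $V$. The finite-length branch is fine (it is in fact vacuous once you assume no rectifiable curve in $\overline V$ reaches $\bdy V\cap\bdy\Om$, since a finite-length limit curve would itself be such a curve).

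However, there is a genuine gap exactly at the step you flagged: the claim that in the infinite-length case $I(\ga)\subset\bdy\Om$. Lemma~\ref{lem:leave-compacts} only gives $I(\ga)\subset\bdy V$, and $\bdy V$ splits as $(\bdy V\cap\bdy\Om)\cup(\bdy V\cap\Om)$; you must rule out impression points in $\bdy V\cap\Om$. Your argument for this transplants the contradiction from the proof of Lemma~\ref{lem:leave-compacts}, but that contradiction uses that $\ga$ is a local \emph{$\dinn^V$}-geodesic of infinite length together with the local biLipschitz equivalence of $d$ and $\dinn^V$ \emph{inside $V$} near the putative limit point. A point $w\in I(\ga)\cap\Om\cap\bdy V$ lies outside $V$, so $d$ and $\dinn^V$ need not be comparable near $w$, and $\ga$ is not a $\dinn^\Om$-geodesic, so boundedness of $\dinn^\Om(x_0,\cdot)$ along the tail contradicts nothing: a $\dinn^V$-geodesic limit can perfectly well accumulate (with infinite $\dinn^V$-length) at a point of $\bdy V$ that is interior to $\Om$ --- think of $V$ spiralling around such a point. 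This is precisely why the paper's proof has a separate Case~2: when $I(\ga)\not\subset\bdy\Om\cap\bdy V$, it covers $\bdy V\setminus\bdy\Om$ by countably many balls $B(z_i,r_i)$ compactly contained in $\Om$, enlarges $V$ to $V_j=V\cup\bigcup_{i\le j}B(z_i,r_i)$, retakes limits of $\dinn^{V_j}$-geodesics, and uses the monotonicity $\dinn^{V_{j+1}}\le\dinn^{V_j}$ to show the new limit curve eventually avoids each $\overline B(z_i,r_i)$, forcing its impression into $\bdy V\cap\bdy\Om$. Your proposal needs this (or an equivalent) construction; without it the appeal to Lemma~\ref{lem:pega} is not justified.
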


\begin{proof}
 If there is a rectifiable curve in $\overline{V}$ that connects a point in $V$ to a point in $\bdy V\cap\bdy\Om$, then
 the accessibility results of~\cite{ABBS} gives a corresponding prime end from the side of $V$. 
 See also Remark~\ref{rem:single-access} above.  Hence, without loss of
 generality, we may assume that there is no rectifiable curve in $\overline{V}$ that connects some point in $V$ to
 a point in $\bdy\Om\cap\bdy V$. Note that we now fulfill the assumptions of Lemma~\ref{lem:leave-compacts}, allowing its
 use in the remainder of the proof.
 
We fix $x_0\in V$ and $x_\infty\in\bdy\Om\cap\bdy V$, and let $\{x_k\}_k$ be a sequence of points in $V$ such that 
$\lim_k x_k=x_\infty$. For each positive integer $k$ let $\ga_{x_0,x_k}^V$ be as in the statement of 
Lemma~\ref{lem:leave-compacts}. Clearly $\ga_{x_0,x_k}^V$ cannot intersect $\bdy\Om$ because if it does, then 
we have a point in $\bdy V\cap\bdy\Om$ that is accessible from $V$, violating the assumption stated in the 
previous paragraph of this proof; see Remark~\ref{rem:single-access}. Hence
Lemma~\ref{lem:leave-compacts} gives us a locally 
rectifiable curve $\ga:[0,\infty)\to\overline{V}$ such that $\ga(0)=x_0$, and for each $t>0$ the curve 
$\ga|_{[0,t]}$ is a $\dinn^{\overline{V}}$--geodesic that lies inside $\Om$. 
Since we assumed that there are no rectifiable curves connecting a point in $V$ to
$\bdy V\cap\bdy \Om$, $\ga$ must have infinite length. Thus, $I(\ga)\subset\bdy V$.

Now the proof diverges according to two possibilities.

\noindent {\bf Case 1:} $I(\ga)\subset\bdy\Om\cap\bdy V$. Then we can proceed to construct an end as follows.  
For $k\in\N$ we set $E_k$ to be the connected component of $N(I(\ga),1/k)\cap \Om$ that contains 
$\ga([t_k,\infty))$ for some $t_k>0$.
Each $E_k$ is an acceptable set, and $\{E_k\}_k$ satisfies
the conditions of a chain. Note that 
\[
\dist_M(\Om\cap\bdy E_k, \Om\cap\bdy E_{k+1})\ge \dist(\Om\cap\bdy E_k, \Om\cap\bdy E_{k+1})\ge \tfrac{1}{k(k+1)}>0
\]
and that $I[\{E_k\}_k]=I(\ga)\subset\bdy\Om\cap\bdy V\subset\bdy\Om$.

It is clear that $[\{E_k\}_k]$ is from the side of $V$, however there is no {\it a priori} reason for $[\{E_k\}_k]$ to be prime. Note, however,
that $[\{E_k\}_k]$ is clearly a member of $E(\ga)$, and so by Lemma~\ref{lem:pega} there is a prime end $[\{A_k\}_k]$ dividing $[\{E_k\}_k]$ such that
$A_k\cap\ga\not=\emptyset$ for each $k$. Therefore, for each $k$, $A_k\cap\overline{V}$ must be nonempty. 
Furthermore, we can choose each $A_k$ to be open (see~\cite[Remark~4.5]{ABBS}), and so we conclude that
$A_k\cap V$ is nonempty. It follows that $[\{A_k\}_k]$ is a prime end from the side of $V$.

\noindent {\bf Case 2:} $I(\ga)\not\subset\bdy\Om\cap\bdy V$. 
We denote by $\overline{B}(x,r)$ the closed ball $\{y\in X\, :\, d(x,y)\le r\}$ rather than the closure of the open ball 
$B(x,r)$. Recall from Assumption~\ref{Xgeo} that $X$ is a
geodesic space. 
It follows that whenever $x\in X$ and $r>0$, each pair of points $z,w\in \overline{B}(x,r)$
can be connected in the closed ball $\overline{B}(x,r)$ by a curve of length at most $2r$. Again, because $X$ is
doubling and hence separable, we can cover $\partial V\setminus\partial\Om$ by at most a countable family of balls
$B(z_i,r_i)$ with $z_i\in \bdy V\setminus\bdy\Om$ and $r_i=\min\{\dist(z_i,X\setminus\Om), d(z_i,x_0)\}/10$. Setting
\[
    V_j:=V\cup\bigcup_{i=1}^j B(z_i,r_i)
\]
for positive integers $j$, note that if $x,y\in V$, then 
\begin{equation}\label{eq:inners}
   d(x,y)\le \dinn^{V_{j+1}}(x,y)\le \dinn^{V_j}(x,y)\le \dinn^V(x,y).
\end{equation}
As in the first part of the proof, we obtain curves $\gamma_j$ for each $j$ that are locally uniform limits
of $\dinn^{V_j}$--geodesics connecting $x_0$ to $x_k$. Because of~\eqref{eq:inners}, and because each
$\ga_n$ is a $\dinn^{V_n}$-geodesic, we know that if $\ga_m(t_j)\in B(z_j,r_j)\cap V$
for some $m\le j$, then for all $n\ge j$ we have that $\ga_n([t_j+2r_j,\infty))$ does not intersect
$\overline{B}(z_j,r_j)$. It follows that for $n\ge j$ we have that $I(\ga_n)\cap \overline{B}(z_j,r_j)$ is empty.
Therefore,
\[
   I(\ga_n)\subset \bdy V\setminus\bigcup_{i=1}^n B(z_i,r_i).
\] 
A final application of Arzel\`{a}-Ascoli theorem gives a subsequence of $\{\ga_n\}_n$ that converges locally uniformly
to a curve $\be:[0,\infty)\to\overline{\bigcup_jV_j}$ such that $\be(0)=x_0$, and because for each $n\in\N$ we have
that $\be([t_n+2r_n,\infty))\cap\overline{B}(z_n,r_n)$ is empty,
\[
     I(\be)\subset\bdy V\setminus\bigcup_{i\in\N}B(z_i,r_i)=\bdy V\cap\bdy\Om.
\]
The proof is now completed by applying the argument at the end of the proof of
Case~1 to $\be$ instead of $\ga$.
\end{proof}

The following corollary to the above theorem gives us a useful fact, namely that compact containment of
connected sets  in 
$\Om$ is the same in both the Prime End topology and the topology on $\Om$ inherited from $X$.
Note however that if we do not require $V$ to be connected, the following theorem would be false in general.

\begin{cor}\label{cor:main}
Let $V\subset\Om$ be an open, connected set. Then $\overline{V}\subset\Om$ if and only if 
$\overline{V}^{P,\Om}\subset\Om$.
\end{cor}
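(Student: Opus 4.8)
The plan is to prove the two implications separately, with the forward direction being essentially trivial and the reverse direction being where Theorem~\ref{main1} does the real work.

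For the implication $\overline{V}\subset\Om \imp \overline{V}^{P,\Om}\subset\Om$: this is already recorded in the last sentence of Remark~\ref{rem:pebasis}, where it is observed that if $\overline V\subset\Om$ then $\overline{V}^{P,\Om}=\overline V$ and $\bdy_P^\Om V=\bdy V$. So I would simply cite that remark. (Alternatively, one argues directly: if $\overline V\subset\Om$ then no sequence in $V$ can converge to an end, since by Lemma~\ref{lem:l1} convergence to an end forces every cluster point to lie in $\bdy\Om$, whereas $\overline V$ is a compact subset of $\Om$ so every sequence in $V$ has a cluster point in $V\subset\Om$; hence no end lies in $\overline{V}^{P,\Om}$.)

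For the reverse implication I would argue by contraposition: assume $\overline V\not\subset\Om$ and show $\overline{V}^{P,\Om}\not\subset\Om$, i.e. that some prime end lies in the prime-end closure of $V$. Since $\overline V\not\subset\Om$ and $V\subset\Om$ is open, there is a point $x_\infty\in\overline V\setminus\Om$, and as $\overline V\setminus V\subset\partial V$ this gives $x_\infty\in\bdy V\cap\bdy\Om$, so $\bdy V\cap\bdy\Om\neq\emptyset$. Now Theorem~\ref{main1} applies (using that $\Om$ satisfies Assumption~\ref{St-Assume} and $V$ is open and connected) and yields a prime end $[\{A_k\}_k]$ of $\Om$ from the side of $V$; moreover, inspecting the conclusion of Theorem~\ref{main1} (and the construction in its proof via Lemma~\ref{lem:pega}), one gets $A_k\cap V\neq\emptyset$ for every $k$. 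It remains to check that ``from the side of $V$'' together with $A_k\cap V\neq\emptyset$ forces $[\{A_k\}_k]\in\overline{V}^{P,\Om}$: pick $v_k\in A_k\cap V$; then $\{v_k\}_k$ is a sequence in $V$ with $v_k\peto[\{A_k\}_k]$ by the definition of $\peto$, so $[\{A_k\}_k]$ lies in the prime-end closure of $V$. Hence $\overline{V}^{P,\Om}\not\subset\Om$, completing the contrapositive.

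The main obstacle is a bookkeeping one rather than a conceptual one: one must make sure that the prime end produced by Theorem~\ref{main1} really does meet $V$ (not merely $\overline V$) in every $A_k$, and that the sequence $\{v_k\}$ converges to it in the precise sequential sense of the prime-end topology. Both points are available: the ``$A_k\cap V\neq\emptyset$'' clause is exactly what is extracted at the ends of Cases~1 and~2 of the proof of Theorem~\ref{main1} (using that the $A_k$ can be taken open, cf.~\cite[Remark~4.5]{ABBS}), and the convergence $v_k\peto[\{A_k\}_k]$ is immediate once $v_k\in A_k$ for every $k$ because the chains $\{A_k\}_k$ are nested. One should also note, as the paragraph before the corollary warns, that connectedness of $V$ is essential here — it is needed to invoke Theorem~\ref{main1} — so no attempt should be made to drop that hypothesis.
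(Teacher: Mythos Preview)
Your proposal is correct and follows essentially the same approach as the paper's proof: both directions match, with the paper simply asserting as ``clear'' the step you spell out (that a prime end from the side of $V$ lies in $\overline{V}^{P,\Om}$ via the sequence $v_k\in A_k\cap V$). Two minor notes: the sentence ``if $\overline V\subset\Om$ then $\overline{V}^{P,\Om}=\overline V$'' appears in the paragraph introducing $\overline{V}^{P,\Om}$ rather than in Remark~\ref{rem:pebasis}, and the condition $A_k\cap V\neq\emptyset$ is precisely Definition~\ref{accside} of ``from the side of $V$'', so no inspection of the proof of Theorem~\ref{main1} is needed there.
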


\begin{proof}
If $\overline{V}\subset\Om$, then $\overline{V}=\overline{V}^{P,\Om}$. If $\overline{V}^{P,\Om}\subset\Om$, 
then clearly there can be no prime ends from the side of $V$. Thus, by Theorem~\ref{main1}, 
$\bdy V\cap\bdy\Om=\emptyset$. Therefore, $\overline{V}\subset\Om$.
\end{proof}

\section{Prime End Capacity and Newtonian Spaces}

Recall that in Section~3 a notion of $p$-capacity associated with the space $N^{1,p}(X)$
was discussed. In this section we will modify this notion to take into consideration the
structure of $\Om\subset\overline{\Om}^P$. 
This new version of $p$-capacity is useful in the study of the Perron method adapted to
the prime end boundary of $\Om$.

\begin{defn}\label{def:cbar}
For $E\subset\overline{\Om}^P$ let
\[
\CpP(E)=\inf_{u\in\mathcal{A}_E} ||u||^p_{\Np(\Om)},
\]
where $u\in \mathcal{A}_E$ if $u\in\Np(\Om)$ satisfies both $u\geq 1$ on $E\cap\Om$ and
\[
\liminf_{\Om\ni y\peto x} u(y)\geq 1 \text{  for all $x\in E\cap\bdy_P\Om$}.
\]
\end{defn}

In the above definition, we can impose the additional requirement that $0\le u\le 1$
without any change in the resulting number for $E$.

The capacity $\CpP$ satisfies the usual basic properties of a capacity.

\begin{lem}
Let $E,E_1,E_2,E_3,\ldots$ be arbitrary subsets of $\overline{\Om}$. Then
\begin{enumerate}
\item $\CpP(\emptyset)=0$,
\item $\mu(E\cap\Om)\leq\CpP(E)$,
\item If $E_1\subset E_2$, then $\CpP(E_1)\leq\CpP(E_2)$ (monotonicity),
\item $\CpP\ds\left(\bigcup_{i=1}^\infty E_i\right)\leq\ds\sum_{i=1}^\infty\CpP(E_i)$ (countable subadditivity).
\end{enumerate}
\end{lem}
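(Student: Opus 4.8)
The plan is to verify each of the four capacity axioms in turn, closely following the template of the proof for the standard $p$-capacity $C_p(\cdot:X)$, making sure that the additional prime-end condition $\liminf_{\Om\ni y\peto x}u(y)\ge 1$ behaves well under the relevant operations.

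First I would dispense with (a): for $E=\emptyset$ the competitor class $\mathcal{A}_\emptyset$ contains $u\equiv 0$ (both conditions are vacuous), so $\CpP(\emptyset)\le \|0\|_{\Np(\Om)}^p=0$, and since the capacity is non-negative we get equality. For (b), fix $u\in\mathcal{A}_E$; then $u\ge 1$ on $E\cap\Om$, so $\int_\Om |u|^p\,d\mu\ge \int_{E\cap\Om}|u|^p\,d\mu\ge \mu(E\cap\Om)$, whence $\|u\|_{\Np(\Om)}^p\ge \mu(E\cap\Om)$; taking the infimum over $u\in\mathcal{A}_E$ gives $\CpP(E)\ge\mu(E\cap\Om)$. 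For (c), monotonicity, observe that $E_1\subset E_2$ forces $\mathcal{A}_{E_2}\subset\mathcal{A}_{E_1}$: if $u\ge 1$ on $E_2\cap\Om$ then in particular $u\ge 1$ on $E_1\cap\Om$, and similarly the $\liminf$ condition at points of $E_2\cap\bdy_P\Om$ implies it at points of $E_1\cap\bdy_P\Om$. Taking the infimum over the smaller class $\mathcal{A}_{E_2}$ cannot be smaller than over $\mathcal{A}_{E_1}$, so $\CpP(E_1)\le\CpP(E_2)$.

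The only step requiring genuine care is (d), countable subadditivity. Here the standard argument runs as follows. We may assume $\sum_i\CpP(E_i)<\infty$, else there is nothing to prove. Fix $\eps>0$ and for each $i$ choose $u_i\in\mathcal{A}_{E_i}$ with $\|u_i\|_{\Np(\Om)}^p\le \CpP(E_i)+2^{-i}\eps$; by the remark following Definition~\ref{def:cbar} we may assume $0\le u_i\le 1$. Set $u:=\sup_i u_i$. One checks $u\in\mathcal{A}_{\bigcup_i E_i}$: if $x\in\bigcup_i E_i\cap\Om$ then $x\in E_{i_0}\cap\Om$ for some $i_0$, so $u(x)\ge u_{i_0}(x)\ge 1$; and if $x\in\bigcup_i E_i\cap\bdy_P\Om$ then $x\in E_{i_0}\cap\bdy_P\Om$ for some $i_0$, and since $u\ge u_{i_0}$ pointwise, $\liminf_{\Om\ni y\peto x}u(y)\ge\liminf_{\Om\ni y\peto x}u_{i_0}(y)\ge 1$. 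It remains to bound $\|u\|_{\Np(\Om)}$. The standard lattice property of Newtonian spaces (see \cite{Sh-rev} or \cite{BBbook}) gives that $v_n:=\max\{u_1,\dots,u_n\}\in\Np(\Om)$ with $g_{v_n}^p\le\sum_{i=1}^n g_{u_i}^p$ a.e., and since $0\le v_n\le 1$ also $\int_\Om v_n^p\,d\mu\le\sum_{i=1}^n\int_\Om u_i^p\,d\mu$; hence $\|v_n\|_{\Np(\Om)}^p\le\sum_{i=1}^\infty\|u_i\|_{\Np(\Om)}^p\le\sum_i\CpP(E_i)+\eps$ uniformly in $n$. Since $v_n\nearrow u$ pointwise and the norms are uniformly bounded, the standard closure/lower-semicontinuity property of $\Np$ under monotone limits (again \cite{Sh-rev}, \cite{BBbook}) yields $u\in\Np(\Om)$ with $\|u\|_{\Np(\Om)}^p\le\sum_i\CpP(E_i)+\eps$. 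Therefore $\CpP(\bigcup_i E_i)\le\sum_i\CpP(E_i)+\eps$, and letting $\eps\to 0$ finishes the proof.

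The main obstacle, such as it is, lies entirely in part~(d): one must confirm that the prime-end $\liminf$ condition is preserved when passing to the supremum (immediate, since the condition is monotone in $u$) and, more importantly, that taking a monotone pointwise supremum of the truncated competitors stays inside $\Np(\Om)$ with control on the energy — this is exactly the quasi-Banach lattice structure of Newtonian spaces and the lower semicontinuity of the $\Np$-norm under monotone convergence, both of which are available in the references cited in Section~3. No new difficulty is introduced by the prime-end boundary beyond checking this compatibility, so the proof is essentially the classical one with the extra boundary clause carried along verbatim.
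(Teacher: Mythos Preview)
Your proof is correct and follows precisely the standard argument that the paper defers to (the paper omits the proof entirely, citing \cite[Proposition~3.2]{BBS}); the prime-end $\liminf$ condition is indeed monotone in $u$, so it carries along for free. One minor quibble: in part~(d), the bound $\int_\Om v_n^p\,d\mu\le\sum_{i=1}^n\int_\Om u_i^p\,d\mu$ follows not from $0\le v_n\le 1$ but simply from $v_n^p=(\max_i u_i)^p=\max_i u_i^p\le\sum_i u_i^p$ pointwise, since all $u_i\ge 0$; your conclusion is correct, only the stated reason is slightly off.
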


The proof of the above lemma follows precisely the proof of an analogous result in~\cite[Proposition~3.2]{BBS}, and so is 
omitted here.

%

We say that a function $u$ on $X$ is \emph{$p$-quasicontinuous} (or, quasicontinuous)
on an open set $W\subset X$ if for every $\eps>0$ we can
find an open set $U_\eps\subset X$ such that $u\vert_{W\setminus U_\eps}$ is continuous and
$C_p(U_\eps)<\eps$.

\begin{prop}\label{prop:cppout}
Suppose that the measure on $X$ is doubling and supports a \p-Poincar\'e inequality. 
Then $\CpP$ is an outer capacity, i.e. for all $E\subset\overline{\Om}^P$,
\[
\CpP(E)=\inf_G \CpP(G),
\]where the infimum is taken over all $G\supset E$ that are open in $\OmPC$. 
\end{prop}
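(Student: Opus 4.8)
The plan is to reduce the statement to the known outer‑regularity of the standard capacity $C_p$ on $X$, transported to $\OmPC$ via the quasicontinuity properties of Newtonian functions. First I would fix $E\subset\OmPC$ and may assume $\CpP(E)<\infty$, else there is nothing to prove. Given $\eps>0$, choose a competitor $u\in\mathcal{A}_E$ with $0\le u\le 1$ and $\|u\|_{\Np(\Om)}^p<\CpP(E)+\eps$. The first substantive step is to extend $u$ to a function on all of $X$: since $\mu$ is doubling and supports a \p-Poincaré inequality, $u\in\Np(\Om)$ can be modified on a set of $p$-capacity zero so that it is quasicontinuous on $\Om$, and one can extend it to a function $\tilde u\in\Np(X)$ with controlled norm (or, more simply, work with the obstacle‑problem solution associated with $\K_{u,u}$ on a neighborhood, keeping the energy essentially unchanged). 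The key point is that the extension is quasicontinuous, so for $\delta>0$ there is an open $U\subset X$ with $C_p(U)<\delta$ and $\tilde u|_{X\setminus U}$ continuous.

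Next, I would replace $u$ by $v:=\min\{1,(1+\eta)u\}$ for small $\eta>0$; then $v\ge 1$ on a neighborhood (in $X$) of $\{u>1/(1+\eta)\}\supset\{u\ge 1\}\supset E\cap\Om$, and $\|v\|_{\Np(\Om)}^p\le(1+\eta)^p\|u\|^p$. The set $\{v\ge 1\}$ has nonempty interior $W_0$ in $X$ containing $E\cap\Om$; the remaining issue is the prime end part $E\cap\bdy_P\Om$. Here the boundary condition $\liminf_{\Om\ni y\peto x}u(y)\ge 1$ for $x\in E\cap\bdy_P\Om$, together with quasicontinuity, should give an open set $G'\subset\OmPC$ in the prime end topology with $E\cap\bdy_P\Om\subset G'$ and $v\ge 1-\eta$ (say) on $G'\cap\Om$ away from the bad set $U$. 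Concretely, using the neighborhood basis $\{G,G^P : G\subset\Om \text{ open}\}$ from Remark~\ref{rem:pebasis}: for each $x\in E\cap\bdy_P\Om$ and representative chain, pick $E_j$ small enough that $v>1-\eta$ on $E_j\setminus U$; then $E_j^P$ is a prime‑end–open neighborhood of $x$. Taking $G=W_0\cup\bigcup$ of these $E_j$'s and then $G^P$ together with an open set carrying $U$, one produces an open $\tilde G\supset E$ in $\OmPC$.

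The final step is to estimate $\CpP(\tilde G)$. One cannot quite use $v$ itself as a competitor for $\tilde G$ because $v$ is only $\ge 1-\eta$, not $\ge 1$, on parts of it, and because of the exceptional set $U$. The fix is the standard one: take $w:=\min\{1, (1-\eta)^{-1}v + \psi_U\}$ where $\psi_U\in\Np(X)$ is a function with $\psi_U\ge 1$ on $U$ and $\|\psi_U\|_{\Np(X)}^p<C_p(U)<\delta$. Then $w\ge 1$ on $\tilde G\cap\Om$ and $\liminf_{\Om\ni y\peto x}w(y)\ge 1$ for $x\in\tilde G\cap\bdy_P\Om$, so $w\in\mathcal{A}_{\tilde G}$, while $\|w\|_{\Np(\Om)}^p \le \bigl((1-\eta)^{-1}(1+\eta)\|u\|_{\Np(\Om)} + \delta^{1/p}\bigr)^p$ by the triangle inequality in $\Np$. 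Letting $\delta\to 0$, then $\eta\to 0$, then $\eps\to 0$ yields $\inf_G\CpP(G)\le\CpP(E)$; the reverse inequality is monotonicity. The main obstacle is the second step: correctly producing the prime‑end–open neighborhood $\tilde G$ of $E$ on which $v$ is close to $1$ outside a set of small $C_p$‑capacity, i.e.\ marrying the prime‑end topology (via the $G^P$ basis) with the quasicontinuity of $v$ in the metric topology of $\Om$; once that is in place the capacity estimate is routine, following~\cite[Proposition~3.2]{BBS} and the outer‑capacity argument there.
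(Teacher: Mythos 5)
Your proposal follows essentially the same route as the paper's proof: use quasicontinuity of the competitor $u$ to produce a small-capacity open exceptional set off which $\{u>1-\eps\}$ is relatively open, use the $\liminf$ condition along a representative chain to get a tail $E_j$ (hence a basis neighborhood $E_j^P$) for each prime end in $E$, and repair the exceptional set with a capacitary function before rescaling. Two of your intermediate assertions are not needed and are false as literally stated: $u\in\Np(\Om)$ need not extend to $\Np(X)$ (quasicontinuity of Newtonian functions on the open set $\Om$ itself, as cited in the paper from~\cite{Sh-rev} and~\cite{BBS2}, is all that is required), and $\{v\ge 1\}$ need not contain a neighborhood of $E\cap\Om$ since $v$ is not continuous --- but your later removal of $U$ and addition of $\psi_U$ is exactly the correct fix, matching the paper's $U\cup V$ construction. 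Note also that the $\liminf$ condition alone already gives $u>1/(1+\eta)$ on all of some $E_j$ (no quasicontinuity needed there), slightly simplifying your step for the prime-end part.
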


While the proof of this proposition is very similar to the proof of the related result~\cite[Proposition~3.3]{BBS},
the situation considered by~\cite{BBS} was simpler in that the boundary of the domain considered there was
the so-called Mazurkiewicz boundary, and so the function $w$ defined there in a manner analogous to the proof
here is easily seen to be admissible in computing the capacity. Here additional arguments were needed, and so
for the convenience of the reader we provide the complete proof here. 

\begin{proof}
By the assumptions on $X$ (doubling property of $\mu$ and the support of a $p$-Poincar\'e inequality)
and by the results in~\cite{Sh-rev} and~\cite{BBS2} , we know that functions in
$N^{1,p}(X)$ and functions in $N^{1,p}(\Om)$ are $p$-quasicontinuous.

%
By the monotonicity of $\CpP$, we obtain the inequality 
$\CpP(E)\leq\inf_G \CpP(G)$ for free. We must work harder for the reverse inequality.

Given $E\subset\OmPC$ and $\eps>0$, we pick a function $u\in\mathcal{A}_E$ with $0\leq u\leq 1$ such that
\[
||u||_{\Np(\Om)}\leq\CpP(E)^{1/p}+\eps.
\]

Since $u$ is  quasicontinuous on $\Omega$, we may also take some open set 
$V\subset\Om$ such that $C_p(V)^{1/p}\leq\eps$ and $u|_{\Om\sm V}$ is continuous. Thus, 
$\{x\in\Om\, :\, u(x)>1-\eps\}\setminus V$
is an open set in $\Om\sm V$ with respect to the subspace topology. Therefore there is another open set 
$U\subset\Om$ such that
\[
U\sm V= 
  \{x\in\Om\, :\, u(x)>1-\eps\}\setminus V\supset(E\cap\Om)\sm V.
\]

Because $C_p(V)\le \eps^p$, we can choose $v\in N^{1,p}(X)$ satisfying
$||v||_{\Np(X)}<2\eps$, $0\le v\le 1$ on $X$, and $v\ge 1$ on $V$. Set 
\[
w=\frac{u}{1-\eps}+v.
\]

Then $w\geq 1$ on $U\cup V$, which is an open set containing $E\cap\Om$. 
Also, for each $[\{E_k\}_k]\in E\cap\bdy_P\Om$, there is a positive integer $K$ such that $u>1-\eps$ on 
$E_K$. Indeed, if not, then we can find a sequence of points $x_k\in E_k$ such that $u(x_k)\le 1-\eps$
but $x_k\peto [\{E_k\}_k]\in E\cap\bdy_P\Om$, a violation of the choice of $u\in\mathcal{A}_E$.

Let
\[
W=U\cup V\cup \bigcup_{[\{E_k\}_k]\in E\cap\bdy_P\Om}(E_K\cup E_K^P),
\]
where 
$E_K^P$ is as defined in Remark~\ref{rem:pebasis}. Then $W\supset E$ is an 
open set in $\OmPC$ and $w\in\mathcal{A}_W$. So
\begin{align*}
\CpP(E)^{1/p}&\leq \inf_G \CpP(G)^{1/p}\leq\CpP(W)^{1/p}\leq||w||_{\Np(\Om^P)}\\
&\leq\frac{1}{1-\eps}||u||_{\Np(\Om^P)}+||v||_{\Np(\Om^P)}\leq\frac{1}{1-\eps}(\CpP(E)^{1/p}+\eps)+2\eps.
\end{align*}
By letting $\eps\to0$, the proof is complete.
\end{proof}

We also restate the definition of quasicontinuity with respect to this capacity.

\begin{defn}
A function $f:\overline{\Om}^P\to\overline{\R}$ is {\it $\CpP$-quasicontinuous} if, for every $\eps>0$, 
there is a relatively open set $U\subset\overline{\Om}^P$ such that $\CpP(U)<\eps$ and $f|_{\overline{\Om}^P\sm U}$ 
is real-valued continuous.
\end{defn}

It is natural for us to try to further relate $\CpP$ to the usual capacity $C_p$. To do this in a 
meaningful way, we would require a method of relating subsets of $\OmPC$ to those in 
$\overline{\Om}$. Since single elements in $\OmPC$ might correspond to large sets in 
$\overline{\Om}$, there is no easy mapping between $\OmPC$ and $\overline{\Om}$ as in 
the case of the Mazurkiewicz boundary in \cite{BBS}. Instead, we introduce the notion of 
the \emph{Prime End Pushforward} of a set $E\subset \overline{\Om}$ in the following way.

\begin{defn}
Given $E\subset X$, the \emph{$\Om$--Prime End Pushforward} of $E$, denoted $P(E)$, is defined as
\[
P(E):=(E\cap\Om) \cup \{[\{E_k\}]\in\bdy_P\Om |\ I[\{E_k\}]\subset E\}.
\]
\end{defn}

It is clear from the definition that if $E\subset F$, then $P(E)\subset P(F)$. Also, this pushforward can 
easily be shown to be ``an open map", in that if $E$ is open in $X$, then $P(E)$ is open in $\OmPC$. Hence, 
if $E\subset\overline{\Om}$ is relatively open, then $P(E)$ is open in $\OmPC$.

With this definition, we have the following Lemma. Recall that we assume the measure on $X$ to be doubling
and support a \p-Poincar\'e inequality.

\begin{lem}\label{lem:cbp}
Let $E\subset X$. Then
\[
\CpP(P(E))\leq C_p(E).
\]
\end{lem}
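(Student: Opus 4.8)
The plan is to show that any function $u \in N^{1,p}(X)$ that is admissible for computing $C_p(E)$ can be used to produce an admissible function for $\CpP(P(E))$ with no larger Newtonian norm on $\Om$. So let $u \in N^{1,p}(X)$ with $u \ge 1$ on $E$; by truncation we may assume $0 \le u \le 1$. I would like to claim that $u|_\Om$ lies in $\mathcal{A}_{P(E)}$. The condition $u \ge 1$ on $P(E) \cap \Om \subset E \cap \Om$ is immediate. The work is in verifying the prime end condition: for every prime end $[\{E_k\}_k] \in P(E) \cap \bdy_P\Om$, we need $\liminf_{\Om \ni y \peto x} u(y) \ge 1$.

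The key point is that if $[\{E_k\}_k] \in P(E) \cap \bdy_P\Om$, then by definition of the pushforward, $I[\{E_k\}_k] \subset E$, so $u \ge 1$ on the impression. The obstacle is that $u$ need only be quasicontinuous, not genuinely continuous, so one cannot blindly pass to the limit along a sequence $y_i \peto [\{E_k\}_k]$. However, $u$ is \emph{lower semicontinuously regularized} in the sense used throughout the paper (as stated after the definition of $u^*$), so $u(x) = \essliminf_{y\to x} u(y)$ at each point $x$, in particular on the impression. I would combine this with Lemma~\ref{lem:sep2}: given $\eps > 0$, there is an acceptable set $E_j$ in the chain with $E_j \subset N(I[\{E_k\}_k], \eps)$. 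Using the lower semicontinuous regularization of $u$ together with the fact that $\mu(B(x,r) \cap \{u < 1-\delta\})$ must have small relative measure near each impression point (by the $\essliminf$ characterization and a Lebesgue-point / quasicontinuity argument as in \cite{BBS} and \cite{BBS2}), one deduces that $u > 1-\delta$ on a large portion of $E_j$, and — crucially — using the connectedness of the sets in the chain together with the Poincaré inequality, that actually $\liminf u \ge 1$ as $y \peto [\{E_k\}_k]$. This is precisely the type of argument run in the proof of Proposition~\ref{prop:cppout} above, where it was shown that for each prime end in $E \cap \bdy_P\Om$ there is $K$ with $u > 1-\eps$ on $E_K$; the same reasoning applies verbatim with $E$ replaced by the impression set, since $u \ge 1$ there.

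Once $u|_\Om \in \mathcal{A}_{P(E)}$ is established, the inequality is immediate:
\[
\CpP(P(E)) \le \|u\|_{N^{1,p}(\Om)}^p \le \|u\|_{N^{1,p}(X)}^p,
\]
and taking the infimum over all admissible $u$ for $C_p(E)$ gives $\CpP(P(E)) \le C_p(E)$.

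The main obstacle I anticipate is the passage from "$u \ge 1$ on the impression $I[\{E_k\}_k]$" to "$\liminf_{\Om \ni y \peto x} u(y) \ge 1$" — that is, controlling the behaviour of the merely quasicontinuous, lower semicontinuously regularized function $u$ along sequences converging to the prime end, using only that it is large on the compact connected impression set. This requires the quasicontinuity of Newtonian functions (from \cite{Sh-rev}, \cite{BBS2}), the lower semicontinuous regularization convention, Lemma~\ref{lem:sep2} to trap tail sets of the chain in shrinking neighborhoods of the impression, and the connectedness built into the chain together with the Poincaré inequality to propagate the bound $u > 1 - \eps$ from near the impression across a whole $E_k$. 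If one prefers, this can be cited directly from the corresponding step in the proof of Proposition~\ref{prop:cppout}, where exactly this conclusion is drawn.
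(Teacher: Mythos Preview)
There is a genuine gap in your argument at the step you yourself flag as the obstacle. You claim that an arbitrary admissible $u\in N^{1,p}(X)$ for $C_p(E)$ may be taken lower semicontinuously regularized ``in the sense used throughout the paper''; but the paper's regularization convention $u=u^*$ applies only to \p-superminimizers, not to general Newtonian functions. A generic $u\in N^{1,p}(X)$ is merely quasicontinuous, and there is no mechanism that forces $\liminf_{\Om\ni y\peto[\{E_k\}_k]}u(y)\ge 1$ from the pointwise information $u\ge 1$ on the impression $I[\{E_k\}_k]\subset\bdy\Om$, a set that may well have measure zero. Your appeal to the proof of Proposition~\ref{prop:cppout} is circular: in that proof the existence of $K$ with $u>1-\eps$ on $E_K$ is \emph{deduced from} the hypothesis $u\in\mathcal{A}_E$, i.e.\ from the very liminf condition you are trying to establish. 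The vague invocation of ``connectedness together with the Poincar\'e inequality to propagate the bound'' does not supply an argument either.

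The paper's proof avoids this difficulty entirely by first using that $C_p$ is an outer capacity: given $\eps>0$, one replaces $E$ by an \emph{open} set $G\supset E$ with $C_p(G)\le C_p(E)+\eps$, and then chooses $f\in N^{1,p}(X)$ with $f=1$ on all of $G$. Since $G$ is open, $P(G)$ is open in $\overline{\Om}^P$, so any sequence $y_k\peto x\in P(G)\cap\bdy_P\Om$ eventually lies in $G\cap\Om$, where $f\equiv 1$; the liminf condition is then trivial. This yields $\CpP(P(E))\le\CpP(P(G))\le\|f\|_{N^{1,p}(\Om)}^p\le C_p(E)+\eps$, and one lets $\eps\to0$. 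The passage to an open superset is not a cosmetic simplification but the essential idea that makes the liminf condition verifiable.
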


\begin{proof}
Given any $\eps>0$, we may pick an open set $G\supset E$ in $X$ such that $C_p(G)\leq C_p(E)+\eps/2$. This 
is due to the fact that $C_p$ is an outer capacity (see~\cite[Corollary~1.3]{BBS2} 
or~\cite[Theorem~5.21]{BBbook}).

Let $f\in\Np(X)$ such that $f=1$ on $G$ and $||f||^p_{\Np(X)}<C_p(G)+\eps/2$. Define 
$\tilde{f}:=f|_\Om$. Note that $\tilde{f}\in\Np(\Om)$.

Immediately, if $x\in P(G)\cap\Om=G\cap\Om$, then $\tilde{f}(x)=1$. For 
$x\in P(G)\cap\bdy_P\Om$, we must look at a sequence $\{y_k\}_k$ in $\Om$ converging 
to $x$ in $\OmPC$. Since $P(G)$ is open,  we may assume that $y_k\in P(G)\cap\Om$ 
for each $k$. Then, clearly, $\ds\liminf_{y_k\peto x}\tilde{f}(y_k)\geq 1$. Thus, $\tilde{f}$ 
is an admissible function for the computation of $\CpP(P(G))$. So,
\[
\CpP(P(E))\leq\CpP(P(G))\leq||\tilde{f}||_{\Np(\Om)}^p\leq C_p(E)+\eps.
\]
Letting $\eps\to0$, the proof is completed.
\end{proof}

Finally, in order to compare boundary values of functions on $\OmPC$, we need to 
consider $N^{1,p}_0(\Om)$ as given in Definition~\ref{zero-bound}. 
%
%

The following proposition is analogous to Proposition~5.4 of~\cite{BBS}. The major difference between our situation
here and that of~\cite{BBS} is that there is no continuous map $\Phi:\partial_P\Om\to \partial\Om$, and so the proof of
the following proposition is more complicated than that found in~\cite{BBS}.

\begin{prop}\label{prop:cpqc}
If $f\in\Np_0(\Om)$, then the zero-extension of $f$ to $\bdy_P\Om$ is $\CpP$-quasicontinuous.
\end{prop}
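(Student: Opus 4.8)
The plan is to transfer $p$-quasicontinuity of the zero-extension of $f$ on $\overline{\Om}$ (in the usual $C_p$ sense) to $\CpP$-quasicontinuity on $\overline{\Om}^P$, using the Prime End Pushforward and Lemma~\ref{lem:cbp} to compare the two capacities. First I would recall that since $f\in\Np_0(\Om)$, its zero-extension $\tf$ to all of $X$ lies in $\Np(X)$, and hence $\tf$ is $p$-quasicontinuous on $X$ (by the results in \cite{Sh-rev} and \cite{BBS2} invoked earlier). Thus, given $\eps>0$, there is an open set $U\subset X$ with $C_p(U)<\eps$ such that $\tf|_{X\sm U}$ is real-valued continuous. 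The natural candidate for the ``bad set'' in $\overline{\Om}^P$ is then $P(U)$, the $\Om$--Prime End Pushforward of $U$: by Lemma~\ref{lem:cbp} we have $\CpP(P(U))\le C_p(U)<\eps$, and $P(U)$ is open in $\overline{\Om}^P$ since $U$ is open in $X$. So the content of the proof is to check that the zero-extension $\hat f:\overline{\Om}^P\to\overline{\R}$ of $f$ is continuous on $\overline{\Om}^P\sm P(U)$.

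The key step is the continuity check, which splits according to whether the point of $\overline{\Om}^P\sm P(U)$ lies in $\Om$ or in $\bdy_P\Om$. On $\Om\sm P(U)=\Om\sm U$, continuity of $\hat f=f$ is inherited from continuity of $\tf|_{X\sm U}$, since the subspace topology of $\Om$ from $\overline{\Om}^P$ agrees with the metric topology. The interesting case is a prime end $[\{E_k\}_k]\in\bdy_P\Om$ with $[\{E_k\}_k]\notin P(U)$; by definition of $P(U)$ this means $I[\{E_k\}_k]\not\subset U$, so we can fix a point $x_0\in I[\{E_k\}_k]\sm U$. Here $\hat f([\{E_k\}_k])=0$ (zero-extension), and since $x_0\in\bdy\Om\sm U$ and $\tf=0$ on $X\sm\Om\supset\bdy\Om$, continuity of $\tf|_{X\sm U}$ at $x_0$ gives: for every $\delta>0$ there is $\rho>0$ with $|\tf|<\delta$ on $B(x_0,\rho)\sm U$. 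The plan is to use Lemma~\ref{lem:single} (or rather its proof technique) together with Lemma~\ref{lem:sep2}: since $x_0\in I[\{E_k\}_k]$, for each $k$ the set $\overline{E_k}$ contains $x_0$, and by Lemma~\ref{lem:sep2} we may choose a representative chain with $E_k\subset N(I[\{E_k\}_k],1/k)$. One then needs to produce, for small enough $\rho$, an index $k$ and a basic neighborhood $E_k^P$ (in the sense of Remark~\ref{rem:pebasis}) of $[\{E_k\}_k]$ on which $\hat f$ is close to $0$ off $P(U)$ — i.e. one wants $E_k\subset B(x_0,\rho)\cup U$ for large $k$, or more carefully, one wants every sequence $y_i\peto[\{E_k\}_k]$ with $y_i\notin U$ to eventually satisfy $y_i\in B(x_0,\rho)$.

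The main obstacle, and where the extra work compared to \cite{BBS} enters, is precisely this last point: unlike the Mazurkiewicz boundary there is no continuous map $\Phi:\bdy_P\Om\to\bdy\Om$, so one cannot simply pull back continuity of $\tf$ on $\bdy\Om$. The impression $I[\{E_k\}_k]$ may be a nondegenerate continuum, and a priori the sets $E_k$ need not shrink to a single point of $\bdy\Om$ even though their intersection does. So the delicate part is to argue that if $[\{E_k\}_k]\notin P(U)$, then not only is some point of the impression outside $U$, but in fact $\hat f$ restricted to $\overline{\Om}^P\sm P(U)$ is small near $[\{E_k\}_k]$. The way to handle this is: suppose not; then there is $\delta>0$ and a sequence $y_i\peto[\{E_k\}_k]$ with $y_i\notin U$ and $|f(y_i)|\ge\delta$. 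Pass to the impression: every cluster point of $\{y_i\}$ lies in $I[\{E_k\}_k]\subset\bdy\Om$ (by the argument of Lemma~\ref{lem:l1}), so after passing to a subsequence $y_i\to z\in\bdy\Om$ with $z\notin U$ (as $U$ is open and $y_i\notin U$). But then continuity of $\tf|_{X\sm U}$ at $z$ forces $f(y_i)=\tf(y_i)\to\tf(z)=0$, contradicting $|f(y_i)|\ge\delta$. This gives that $\hat f$ is continuous at $[\{E_k\}_k]$ relative to $\overline{\Om}^P\sm P(U)$, using the basic neighborhoods $G^P$ of Remark~\ref{rem:pebasis} to phrase continuity in terms of sequences. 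Assembling the two cases shows $\hat f|_{\overline{\Om}^P\sm P(U)}$ is continuous, and since $\CpP(P(U))<\eps$ with $P(U)$ relatively open, this establishes $\CpP$-quasicontinuity of $\hat f$, completing the proof.
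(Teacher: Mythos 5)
Your proposal is correct and follows essentially the same route as the paper: extend $f$ by zero to $f^0\in N^{1,p}(X)$, take an exceptional open set $U$ with $C_p(U)<\eps$ from the $p$-quasicontinuity of $f^0$, push it forward to $P(U)$ (open, with $\CpP(P(U))<\eps$ by Lemma~\ref{lem:cbp}), and verify continuity of the extension off $P(U)$ by noting that any sequence $y_i\peto[\{E_k\}_k]$ with $y_i\notin U$ has all its cluster points in $I[\{E_k\}_k]\cap(X\sm U)\subset\bdy\Om\sm U$, where $f^0=0$ and $f^0|_{X\sm U}$ is continuous. The paper's proof is exactly this subsequence/cluster-point argument (stated directly rather than by contradiction), so no further changes are needed.
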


\begin{proof}
Let $f^0$ be the zero extension of $f$ (as a function on $\Om$) to all of $X$. Then $f^0\in\Np(X)$, and so for any $\eps>0$ there is an open set $U_\eps$ in $X$ such that $C_p(U_\eps)<\eps$ and $f^0|_{X\sm U_\eps}$ is continuous. Now let $\hat{f}:\OmPC\to\overline{\R}$ be defined as
\[
\hat{f}(x)=
\begin{cases}
f(x) & \text{ if }x\in\Om, \\
0 & \text{ if }x\in\bdy_P\Om.
\end{cases}
\] 

By Lemma~\ref{lem:cbp} we know that $\CpP(P(U_\eps))<\eps$. We wish to show that 
$\hat{f}\vert_{\overline{\Om}^P\setminus P(U_\eps)}$ is continuous. 
Let $x\in\OmPC\sm P(U_\eps)$ and $\{y_k\}_k$ be a sequence in $\OmPC\sm P(U_\eps)$ such that 
$y_k\peto x$. We wish to check that $\hat{f}(y_k)\to\hat{f}(x)$. 
Since $\hat{f}\vert_{\Om\sm P(U_\eps)}=f\vert_{\Om\sm U_\eps}$ is continuous, we know that
if $x\in \Om$ then the above convergence holds. So
without loss of generality, 
we may consider the following two cases.


\noindent {\bf Case 1:} $y_k\in\bdy_P\Om$ for each $k$, and $x\in\bdy_P\Om$. Since 
$\hat{f}(y_k)=0=\hat{f}(x)$ for all $k$, this case is immediate.

\noindent {\bf Case 2:} $y_k\in\Om$ for each $k$, and $x\in\bdy_P\Om$. Let 
$\{y_{k_i}\}_i$ be a subsequence of $\{y_k\}_k$. Since $I[x]$ is a compact set and 
$\overline{\Om}$ is a compact subset of $X$, 
there is a further subsequence $\{y_{k_{i,j}}\}_j$ such that, for some $x_0\in I[x]$, 
$y_{k_{i,j}}\to x_0$ in the topology of $X$. Since $y_{k_{i,j}}\in X\sm U_\eps$ for each 
$j$ and $X\sm U_\eps$ is a closed set, the limit $x_0$ of $y_{k_{i,j}}$ cannot lie in $U_\eps$.
Therefore 
$x_0\in X\sm(\Om\cup U_\eps)$, $f^0(x_0)=0$ and 
$\hat{f}(y_{k_{i,j}})=f^0(y_{k_{i,j}})\to 0$. Since this happens for all subsequences of 
$\{y_k\}_k$, we conclude that $\hat{f}(y_{k_{i,j}})\to 0=\hat{f}(x)$.

With both possibilities dealt with, we have proved the desired claim.
\end{proof}

It is possible to define a dual notion to the {\it Prime End Pushforward} of set, namely the {\it Prime End Pullback}.
\begin{defn}
Given $F\subset \OmPC$, the \emph{$\Om$--Prime End Pullback} of $F$, denoted $P^{-1}(F)$, is defined as
\[
P^{-1}(F):=(E\cap\Om) \cup \bigcup_{[\{E_k\}]\in F}I[\{E_k\}].
\]
\end{defn}
It is natural to consider how the two notions interact. It can be quickly deduced from their definitions
that if $E\subset X$ and $F\subset \OmPC$, then $P^{-1}(P(E))\subset E$ and $F\subset P(P^{-1}(F))$. 
As the following two examples show, 
equality does not hold in general for either case.

\begin{egs}
If we take $X=\R^2$, with
\[
\Om:=(0,1)^2\sm\bigcup_{n=2}^\infty\{1/n\}\times(0,1/2],
\]
and let $E=[0,1]^2$, we observe that
\[
P^{-1}(P(E))=[0,1]^2\sm\{0\}\times[0,1/2).
\]
Thus, $E\not\subset P^{-1}(P(E))$ in this case.
\end{egs}

\begin{egs}
Letting $X=\R^2$ and let $\Om$ be the slit disk
\[
\Om=B(0,1)\sm[0,1)\times\{0\}. 
\]
Take (recalling Remark~\ref{rem:pebasis})
\[
F=\{(x,y)\in\Om\ |\ y>0\}^P.
\]
Then
$F\subset\OmPC$ consists of the upper half of the slit disk in addition to the prime ends associated with the
'top' part of the slit. It is then easy to see that $P(P^{-1}(F))$ will contain both 'sides' of the slit, and so
$P(P^{-1}(F))\not\subset F$.
\end{egs}

The proof of the following lemma is mutatis mutandis the same as the proof of Lemma~\ref{lem:cbp}. We leave
it to the interested reader to verify this.

\begin{lem}
Given $E\subset\overline{\Om}^P$, we have
\[
  \CpP(E)\le C_p(P^{-1}(E)).
\]
\end{lem}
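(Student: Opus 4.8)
The statement to prove is: for $E\subset\overline{\Om}^P$, one has $\CpP(E)\le C_p(P^{-1}(E))$, where $P^{-1}(E)=(E\cap\Om)\cup\bigcup_{[\{E_k\}]\in E}I[\{E_k\}]\subset X$. The plan is to mimic the proof of Lemma~\ref{lem:cbp} essentially verbatim, constructing an admissible function for $\CpP(E)$ out of a near-optimal admissible function for $C_p$ of (an open neighbourhood of) $P^{-1}(E)$, and pushing the estimate through.

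First I would fix $\eps>0$ and, using the fact that $C_p$ is an outer capacity (\cite[Corollary~1.3]{BBS2} or \cite[Theorem~5.21]{BBbook}), choose an open set $G\supset P^{-1}(E)$ in $X$ with $C_p(G)\le C_p(P^{-1}(E))+\eps/2$. Then pick $f\in\Np(X)$ with $f=1$ on $G$, $0\le f\le 1$, and $\|f\|_{\Np(X)}^p< C_p(G)+\eps/2$, and set $\tf:=f|_\Om\in\Np(\Om)$. The key point is that $\tf\in\mathcal{A}_E$. For $x\in E\cap\Om$: since $E\cap\Om\subset P^{-1}(E)\subset G$, we get $\tf(x)=1$. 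For $x=[\{E_k\}_k]\in E\cap\bdy_P\Om$: by definition $I[\{E_k\}_k]\subset P^{-1}(E)\subset G$, and since $G$ is open in $X$ and $\overline{\Om}$ is compact, one checks that there is a positive integer $K$ with $E_K\subset G$ — indeed, if not, one finds points $x_k\in E_k\setminus G$ accumulating (in $X$, along a subsequence) to some point of $I[\{E_k\}_k]\subset G$, contradicting that $X\setminus G$ is closed. Hence any sequence $\Om\ni y_j\peto[\{E_k\}_k]$ eventually lies in $E_K\subset G$, so $\liminf_{j}\tf(y_j)\ge 1$; thus the second admissibility condition of Definition~\ref{def:cbar} holds. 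Therefore
\[
\CpP(E)\le \|\tf\|_{\Np(\Om)}^p\le \|f\|_{\Np(X)}^p< C_p(G)+\eps/2\le C_p(P^{-1}(E))+\eps,
\]
and letting $\eps\to 0$ finishes the proof.

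The only step requiring any genuine thought — and the place where this differs slightly from Lemma~\ref{lem:cbp}, which dealt with a pushforward rather than a pullback — is verifying the boundary admissibility condition, i.e.\ that $I[\{E_k\}_k]\subset G$ open in $X$ forces some $E_K\subset G$. This is precisely the compactness argument for impressions already used in the proof of Proposition~\ref{prop:cppout} (the "if not, find $x_k\in E_k$ with $x_k\peto[\{E_k\}_k]$" argument) combined with Lemma~\ref{lem:l1} and the fact that $\overline{\Om}$ is compact (properness of $X$); alternatively one can invoke Lemma~\ref{lem:sep2} directly: choosing $\eps_0>0$ small enough that $N(I[\{E_k\}_k],\eps_0)\subset G$ (possible since $I[\{E_k\}_k]$ is compact and contained in the open set $G$), Lemma~\ref{lem:sep2} yields an $E_K\subset N(I[\{E_k\}_k],\eps_0)\subset G$. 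Everything else is a routine transcription of the argument for Lemma~\ref{lem:cbp}, which is why the paper leaves it to the reader.
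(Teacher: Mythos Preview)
Your proof is correct and follows exactly the approach the paper intends: the paper simply states that the proof is \emph{mutatis mutandis} the same as that of Lemma~\ref{lem:cbp} and leaves the details to the reader, and your write-up supplies precisely those details. The only nontrivial adaptation you flag---that $I[\{E_k\}_k]\subset G$ with $G$ open forces some $E_K\subset G$---is handled correctly either by the compactness argument or by invoking Lemma~\ref{lem:sep2}, and this is indeed the one place where the pullback version requires a word more than the pushforward version.
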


\section{The Perron solution with respect to Prime Ends}


Now we are ready to consider the following Dirichlet problem: Given
$g:\bdy_P\Om\to\R$, find a function $u$ that is \p-harmonic on $\Om$ and such that 
$u=g$ on $\bdy_P\Om$ in some sense. The method we use to construct possible
solutions to the above problem for certain type of functions $g$ is the 
Perron method, adapted to $\bdy_P\Om$. We continue to assume the standard
assumptions about $X$ (the doubling property of the measure on $X$, and the
validity of a \p-Poincar\'e inequality on $X$), and that $\Om$ is a bounded
domain in $X$ with $C_p(X\setminus\Om)>0$ such that $\Om$ satisfies the 
condition given in Definition~\ref{St-Assume}.

\begin{defn}
A function $u:\Om\to(-\infty,\infty]$ is said to be \emph{\p-superharmonic} if
\begin{enumerate}
\item $u$ is lower semicontinuous,
\item $u$ is not identically $\infty$ on $\Om$,
\item for every nonempty open set $V\Subset\Om$ and all functions $v\in Lip(X)$, 
          if $v\leq u$ on $\bdy V$, then $H_Vv\leq u$ in $V$.
\end{enumerate}
A function $u$ is said to be \emph{\p-subharmonic} if $-u$ is \p-superharmonic.
\end{defn}

We now define the Perron solution with respect to $\overline{\Om}^P$.

\begin{defn}
Given a function $f:\bdy_P\Om\to\overline{\R}$, let $\mathcal{U}_f(\overline{\Om}^P)$ be the set of 
all \p-superharmonic functions $u$ on $\Om$ bounded below such that
\[
\liminf_{\Om\ni y\peto [\{E_n\}_n]}u(y)\geq f([\{E_n\}_n])\text{  for all }[\{E_n\}_n]\in\bdy_P\Om.
\]
We define the \emph{upper Perron solution} of $f$ by
\[
\itoverline{P}_{\overline{\Om}^P}f(x)=\inf_{u\in\mathcal{U}_f(\overline{\Om}^P)}u(x),\ x\in\Om.
\]
Similarly, let $\mathcal{L}_f(\overline{\Om}^P)$ be the set of all \p-subharmonic functions 
$u$ on $\Om$ bounded above such that
\[
\limsup_{\Om\ni y\peto [\{E_n\}_n]}u(y)\leq f([\{E_n\}_n])\text{  for all }[\{E_n\}_n]\in\bdy_P\Om.
\]
We define the \emph{lower Perron solution} of $f$ by
\[
\itunderline{P}_{\overline{\Om}^P}f(x)=\sup_{u\in\mathcal{L}_f(\overline{\Om}^P)}u(x),\ x\in\Om^P.
\]
Note that $\itunderline{P}_{\overline{\Om}^P}f=-\itoverline{P}_{\overline{\Om}^P}(-f)$.
If $\itoverline{P}_{\overline{\Om}^P}f=\itunderline{P}_{\overline{\Om}^P}f$ on $\Om$, then we let 
$P_{\overline{\Om}^P}f:=\itoverline{P}_{\overline{\Om}^P}f$, and $f$ is said to be \emph{resolutive}.
\end{defn}

For the classical formulation of the Perron solution, it is shown in~\cite[Theorem~5.1]{BBSPer} 
that functions $f\in\Np(X)$ are resolutive. We wish to provide a similar result for an 
appropriate class of functions on $\bdy_P\Om$. Due to the potential 
non-compactness of the 
space $\OmPC$, we must first prove that several important results still hold in this space. 
Chief among them is the following comparison principle. An analogous comparison principle, set up
for the Mazurkiewicz boundary in~\cite[Proposition~7.2]{BBS}, is straightforward to prove because
of the assumption in~\cite{BBS} that the Mazurkiewicz boundary $\partial_M\Om$ is compact. 
Here we overcome the lack of compactness of $\partial_P\Om$ with the aid of Corollary~\ref{cor:main}.

\begin{prop}\label{prop:comp}
Assume that $u$ is \p-superharmonic and that v is \p-subharmonic in $\Om$. If
\[
\infty\not=\limsup_{\Om\ni y\peto x}v(y)
  \le\liminf_{\Om\ni y\peto x}u(y)\not=-\infty\text{ for each } x\in\bdy_P\Om,
\]
then $v\leq u$ in $\Om$.
\end{prop}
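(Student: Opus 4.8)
The plan is to adapt the classical comparison-principle argument for $p$-superharmonic versus $p$-subharmonic functions, but to replace the usual compactness of $\partial\Omega$ by the structural results proved earlier for $\partial_P\Omega$. For $\varepsilon>0$ I would consider the open set
\[
U_\varepsilon:=\{x\in\Omega\ :\ v(x)<u(x)+\varepsilon\}\cup\{[\{E_k\}_k]\in\partial_P\Omega\ :\ \text{some }E_k\subset U_\varepsilon\cap\Omega\},
\]
that is, the prime-end-open set built from $\{v<u+\varepsilon\}$ in the sense of Remark~\ref{rem:pebasis}. Since $u$ is lower semicontinuous and $v$ is upper semicontinuous on $\Omega$, the set $\{v<u+\varepsilon\}$ is open in $\Omega$; the boundary-inequality hypothesis says that along any sequence $\Omega\ni y_i\peto x$ with $x\in\partial_P\Omega$ we eventually have $v(y_i)<u(y_i)+\varepsilon$, so every prime end of $\Omega$ lies in $U_\varepsilon$. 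By Lemma~\ref{lem:nbhd-bdy}, for each prime end and each representative chain there is then an index $k_U$ with $E_{k_U}\subset U_\varepsilon\cap\Omega$; in particular $\partial_P\Omega\subset U_\varepsilon$ in the prime end closure. The goal is to show $\Omega\setminus U_\varepsilon$ is either empty or compactly contained in $\Omega$, and then run the usual interior comparison on each connected component.

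The key step is the following dichotomy for a connected component $W$ of $\Omega\setminus U_\varepsilon$ (or rather of the open set $\{v\ge u+\varepsilon\}$ after lower semicontinuous regularization issues are handled). If some such $W$ has $\overline{W}\cap\partial\Omega\neq\emptyset$, then by Theorem~\ref{main1} there is a prime end of $\Omega$ from the side of $W$, i.e.\ a prime end $[\{A_k\}_k]$ with $A_k\cap W\neq\emptyset$ for all $k$. But then we can pick points $x_k\in A_k\cap W$; these form a sequence with $x_k\peto[\{A_k\}_k]\in\partial_P\Omega\subset U_\varepsilon$, so eventually $x_k\in U_\varepsilon\cap\Omega$, contradicting $x_k\in W\subset\Omega\setminus U_\varepsilon$. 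Hence every component $W$ of $\{v\ge u+\varepsilon\}$ satisfies $\overline{W}\cap\partial\Omega=\emptyset$, so $\overline{W}\subset\Omega$; by Corollary~\ref{cor:main} we also get $\overline{W}^{P,\Omega}\subset\Omega$, and since $\Omega$ is bounded and $X$ is proper, $\overline{W}$ is compact and $\overline{W}\Subset\Omega$. On $\partial W$ we have $v=u+\varepsilon$ (by continuity of $v-u$ on the relevant regularized set, or by a standard limiting argument using semicontinuity), so $v\le u+\varepsilon$ on $\partial W$; the definition of $p$-superharmonicity applied with $W\Subset\Omega$, together with the corresponding property for $p$-subharmonic $v$ and a Lipschitz approximation on $\partial W$ (as in the proof of the classical comparison principle, e.g.\ via obstacle-problem solutions $H_W$), yields $v\le u+\varepsilon$ throughout $W$, contradicting $v\ge u+\varepsilon$ on $W$ unless $W=\emptyset$.

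Therefore $\{v\ge u+\varepsilon\}$ has empty interior, indeed is empty after regularization, so $v< u+\varepsilon$ on $\Omega$; letting $\varepsilon\to 0$ gives $v\le u$ in $\Omega$. I expect the main obstacle to be the careful handling of the comparison on the compactly contained component $W$: one must pass from the semicontinuous inequality $v\le u+\varepsilon$ on $\partial W$ to an admissible Lipschitz competitor $\psi$ on $\partial W$ with $v\le\psi\le u$ there so that condition (c) of $p$-(sub/super)harmonicity can be invoked on both $u$ and $v$ with the same $V=W$, and then compare $H_W\psi$ with both. This is exactly the classical argument (cf.\ the proof in~\cite{BBSPer}), and the only genuinely new input needed beyond it is the compact-containment fact, which is supplied by Theorem~\ref{main1} and Corollary~\ref{cor:main} as above; the rest is routine, so I would cite~\cite{BBSPer} for the interior comparison details rather than reproduce them.
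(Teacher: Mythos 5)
Your strategy is the same as the paper's: pass to the ``good'' open set $\{v<u+\eps\}$, use the boundary hypothesis to see that every prime end lies in its prime-end-open hull, invoke the Section~5 machinery (Theorem~\ref{main1}/Corollary~\ref{cor:main}) to conclude that the remaining ``bad'' set is compactly contained in $\Om$, and finish with the classical interior comparison as in \cite{KM}. Your derivation that each prime end belongs to $\{v<u+\eps\}^P$, and the contradiction you extract from a prime end from the side of $W$, are both correct and are exactly how the paper exploits the non-compact prime end boundary.

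There is, however, a concrete flaw in the execution. The set $\{v\ge u+\eps\}=\Om\setminus\{v<u+\eps\}$ is \emph{relatively closed} in $\Om$ (because $v-u$ is upper semicontinuous), not open as you assert parenthetically; ``lower semicontinuous regularization'' does not change this. Consequently its connected components $W$ are not open, so you cannot apply Theorem~\ref{main1} to $W$ (its hypothesis is an \emph{open} connected $V\subset\Om$), nor can you feed $W$ into condition (c) of the definition of $p$-superharmonicity, which requires an open $V\Subset\Om$. The paper sidesteps this by taking $U_\eps=\OmPC\setminus\overline{V_\eps}$, whose components are open, and then using Corollary~\ref{cor:main}; an equivalent repair of your argument is to replace $W$ by a component of the interior of the bad set, or better, by a slightly enlarged open set $G$ with $\{v\ge u+\eps\}\subset G\Subset\Om$ and $\bdy G\subset\{v<u+\eps\}$. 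This enlargement also fixes a second slip: your claim that $v=u+\eps$ (hence $v\le u+\eps$) holds pointwise on $\bdy W$ does not follow from semicontinuity --- since $W$ is closed, $\bdy W\subset\{v\ge u+\eps\}$, and upper semicontinuity of $v-u$ bounds $\limsup_{y\to x}(v-u)(y)$ \emph{from above} by $(v-u)(x)$, which is the wrong direction --- whereas on $\bdy G$ as above one has $v<u+\eps$ pointwise by construction, and then lower semicontinuity of $u$ and upper semicontinuity of $v$ convert this into the $\limsup$/$\liminf$ hypothesis needed for the comparison on $G$. With these two adjustments your argument closes up and coincides with the paper's.
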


\begin{proof}
Let $\eps>0$. Since $u$ is lower semicontinuous and $v$ is upper semicontinuous, we know that
$V_\eps:=\{y\in\Om\, :\, u(y)-v(y)>-\eps\}$ is an open subset of $\Om$.

By the assumption of this proposition, for each $x\in\bdy_P\Om$ 
we can find a neighborhood $V_\eps^x$ of $x$ in 
$\OmPC$ such that $v<u+\eps$ in $V_\eps^x\cap\Om$. Note that $V_\eps^x\cap\Om\subset V_\eps$
for each $x\in\bdy_P\Om$. Thus $V_\eps\cup\bdy_P\Om$ is an open subset of $\overline{\Om}^P$.

Let $U_\eps=\OmPC\sm\overline{V_\eps}$ and 
$C_\eps$ be a connected component of $U_\eps$. Then, by Lemma~\ref{lem:nbhd-bdy}, 
$\overline{C_\eps}^{P,\Om}\subset\Om$ and $v\le u+\eps$ on $\bdy_P^\Om C_\eps$.

By Corollary~\ref{cor:main}, we know that $\overline{C_\eps}\subset\Om$, and, since 
$\bdy_P^\Om C_\eps=\bdy C_\eps$ in this case, $v\le u+\eps$ on $\bdy C_\eps$. 
We now proceed 
as in~\cite[Theorem~7.2]{KM} to see that $v\leq u$ in $C_\eps$. Since this inequality
holds for each connected component of $U_\eps$, we conclude that $v\leq u+\eps$ in $U_\eps$. 
Letting $\eps\to0$, the proof is complete.
\end{proof}

An immediate consequence of Proposition \ref{prop:comp} is the following Corollary.

\begin{cor}\label{cor:upper-lower}
If $f:\bdy_P\Om\to\R$, then
\[
\itunderline{P}_{\overline{\Om}^P}f\leq\itoverline{P}_{\overline{\Om}^P}f.
\]
\end{cor}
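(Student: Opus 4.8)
The plan is to obtain Corollary~\ref{cor:upper-lower} as a direct application of the comparison principle in Proposition~\ref{prop:comp}. Fix $f:\bdy_P\Om\to\R$, and let $u\in\mathcal{U}_f(\overline{\Om}^P)$ and $v\in\mathcal{L}_f(\overline{\Om}^P)$ be arbitrary. By the definitions of these classes, for each $x\in\bdy_P\Om$ we have
\[
\limsup_{\Om\ni y\peto x}v(y)\le f(x)\le\liminf_{\Om\ni y\peto x}u(y).
\]
The only subtlety is that Proposition~\ref{prop:comp} additionally requires the left-hand side to be different from $\infty$ and the right-hand side different from $-\infty$. Since $f$ is real-valued, $f(x)\in\R$, so the chain of inequalities forces $\limsup_{\Om\ni y\peto x}v(y)\le f(x)<\infty$ and $\liminf_{\Om\ni y\peto x}u(y)\ge f(x)>-\infty$, and all the hypotheses of Proposition~\ref{prop:comp} are met. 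Hence $v\le u$ in $\Om$.

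Now take the infimum over $u\in\mathcal{U}_f(\overline{\Om}^P)$ and the supremum over $v\in\mathcal{L}_f(\overline{\Om}^P)$. Since $v\le u$ pointwise in $\Om$ for \emph{every} such pair, we get $\sup_v v(x)\le \inf_u u(x)$ for every $x\in\Om$, which is precisely
\[
\itunderline{P}_{\overline{\Om}^P}f(x)\le\itoverline{P}_{\overline{\Om}^P}f(x)\quad\text{for all }x\in\Om.
\]
One should note that this argument is only non-vacuous when both families $\mathcal{U}_f(\overline{\Om}^P)$ and $\mathcal{L}_f(\overline{\Om}^P)$ are nonempty; if, say, $\mathcal{U}_f(\overline{\Om}^P)=\emptyset$, then $\itoverline{P}_{\overline{\Om}^P}f\equiv+\infty$ by the usual convention that the infimum over the empty set is $+\infty$, and the inequality holds trivially, and symmetrically for $\mathcal{L}_f$.

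The only real work has already been done in Proposition~\ref{prop:comp}; there is no genuine obstacle here beyond checking that $f$ real-valued supplies the finiteness conditions needed to invoke that proposition. If anything, the mildly delicate point is simply to be careful with the empty-family conventions so that the statement is literally correct for all real-valued $f$, including pathological ones for which one of the Perron classes could be empty.
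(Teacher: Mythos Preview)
Your argument is correct and is exactly what the paper intends: the corollary is stated there as an immediate consequence of Proposition~\ref{prop:comp}, and your write-up simply spells out that deduction, including the finiteness check afforded by $f$ being real-valued and the empty-class conventions.
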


Before we state our main theorem, we first need the following two results.
The first part of Proposition~\ref{prop:obstseq} is proved in~\cite{Sh-conv}, while 
the second part is proved in~\cite[Proposition~5.5]{BBSPer}. See~\cite{Far-ObsConv} for more
on convergence properties related to obstacle problems. The proof of Lemma~\ref{lem:decsets} 
is very similar to the proof of the analogous result~\cite[Lemma~7.5]{BBS},
and so we omit the proof here.

\begin{lem}\label{lem:decsets}
Let $\{U_k\}_{k=1}^\infty$ be a decreasing sequence of relatively open sets in $\OmPC$ such that 
$\CpP(U_k)<2^{-kp}$. Then there exists a decreasing sequence of nonnegative functions 
$\{\psi_j\}_{j=1}^\infty$ on $\Om$ such that $||\psi_j||_{\Np(\Om)}<2^{-j}$, $\psi_j\geq k-j$ in 
$U_k\cap\Om$, and $\ds\lim_{\Om\ni y\peto x}\psi_j(y)\geq k-j$ for all $x\in U_k\cap\bdy_P\Om$.
\end{lem}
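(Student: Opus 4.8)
The statement is the prime-end analog of a standard fact about $C_p$-capacity (see \cite[Lemma~7.5]{BBS} for the Mazurkiewicz version): from a fast-decaying sequence of open sets one extracts a decreasing sequence of small Newtonian functions that blow up on each set. The plan is to combine the outer-capacity property of $\CpP$ (Proposition~\ref{prop:cppout}) with a quasicontinuity/truncation argument, taking care that the required lower semicontinuity statement at the prime end boundary (the limit-inferior condition) is handled through the open neighborhoods furnished by $\CpP$ rather than through any metric on $\OmPC$, which does not exist.

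First I would, for each $k$, use the definition of $\CpP$ to pick $u_k\in\mathcal{A}_{U_k}$ with $0\le u_k\le 1$ and $\|u_k\|_{\Np(\Om)}<2^{-kp/p}=2^{-k}$ (adjusting the constant in the hypothesis $\CpP(U_k)<2^{-kp}$ as needed); by definition of $\mathcal{A}_{U_k}$ we have $u_k\ge 1$ on $U_k\cap\Om$ and $\liminf_{\Om\ni y\peto x}u_k(y)\ge 1$ for every $x\in U_k\cap\bdy_P\Om$. Then set
\[
  \varphi_j := \sum_{k>j} u_k,
\]
which is a nonnegative function on $\Om$ with $\|\varphi_j\|_{\Np(\Om)}\le\sum_{k>j}2^{-k}=2^{-j}$ (so possibly rescale slightly to get the strict inequality). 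The sequence $\{\varphi_j\}_j$ is clearly decreasing in $j$. For the pointwise bound: if $x\in U_k\cap\Om$ then by monotonicity of the $U_k$'s we have $x\in U_i\cap\Om$ for all $i\le k$, hence $u_i(x)\ge 1$ for $j<i\le k$, giving $\varphi_j(x)\ge k-j$ on $U_k\cap\Om$. The boundary statement is the delicate point: for $x\in U_k\cap\bdy_P\Om$, since $U_k$ is open in $\OmPC$ and decreasing, $x\in U_i$ for all $i\le k$, and each $u_i$ with $j<i\le k$ satisfies $\liminf_{\Om\ni y\peto x}u_i(y)\ge 1$; I would then invoke superadditivity of $\liminf$ over the finite sum $\sum_{j<i\le k}u_i$ (the tail $\sum_{i>k}u_i$ is nonnegative and contributes a nonnegative liminf, or is dropped) to conclude $\liminf_{\Om\ni y\peto x}\varphi_j(y)\ge k-j$.

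The main obstacle I anticipate is exactly this last step: $\liminf$ is only superadditive for \emph{finitely many} summands in general, and $\varphi_j$ is an infinite sum, so one must split $\varphi_j=\sum_{j<i\le k}u_i+\sum_{i>k}u_i$, control the finite part by superadditivity, and argue that the infinite tail, being a nonnegative $\Np$ function, has nonnegative liminf along $\Om\ni y\peto x$ — which is automatic since each $u_i\ge0$. A secondary subtlety is ensuring the $\Np(\Om)$-norm of the infinite sum $\varphi_j$ is genuinely controlled; this uses completeness of $\Np(\Om)$ together with the fact that minimal $p$-weak upper gradients are subadditive, so that $\sum_{k>j}u_k$ converges in $\Np(\Om)$ with norm bounded by $\sum_{k>j}\|u_k\|_{\Np(\Om)}$. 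With these two points settled, $\psi_j:=\varphi_j$ (or a harmless rescaling) gives the desired sequence, and the proof is complete.
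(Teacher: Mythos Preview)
Your proposal is correct and is exactly the standard argument the paper has in mind: the paper omits the proof of this lemma entirely, referring to \cite[Lemma~7.5]{BBS}, whose proof proceeds just as you describe (choose $u_k\in\mathcal{A}_{U_k}$ with $\|u_k\|_{\Np(\Om)}<2^{-k}$ and set $\psi_j=\sum_{k>j}u_k$). Your handling of the boundary $\liminf$ via the split into a finite sum plus a nonnegative tail is the right way to deal with the only genuinely new feature here, namely that convergence to points of $\bdy_P\Om$ is sequential rather than metric.
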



\begin{prop}\label{prop:obstseq}
Let $\{f_j\}_{j=1}^\infty$ be a $p$-quasieverywhere 
decreasing sequence of functions in $\Np(\Om)$ such that $f_j\to f$ 
in $\Np(\Om)$ as $j\to\infty$. Then $Hf_j$ decreases to $Hf$ locally uniformly in $\Om$.

If $u$ and $u_j$ are solutions to the $\mathcal{K}_{f,f}$ and $\mathcal{K}_{f_j,f_j}$-obstacle problems, 
then $\{u_j\}_{j=1}^\infty$ decreases q.e. in $\Om$ to $u$.
\end{prop}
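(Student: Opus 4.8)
\textbf{Proof proposal for Proposition~\ref{prop:obstseq}.}

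The plan is to treat the two statements by the same underlying mechanism: monotone convergence of obstacle-problem solutions together with a Caccioppoli-type energy estimate that upgrades $\Np(\Om)$-convergence of the data to local uniform convergence of the solutions. First I would record that $Hf$ is the solution of the $\mathcal{K}_{-\infty,f}(\Om)$-obstacle problem, so both halves of the proposition are really statements about obstacle problems; this lets me run one argument. Since $\{f_j\}$ is decreasing q.e.\ and converges to $f$ in $\Np(\Om)$, the admissible classes are nested, $\mathcal{K}_{f_{j+1},f_{j+1}}(\Om)\subset\mathcal{K}_{f_j,f_j}(\Om)$ after adjusting by the zero-boundary difference, so by uniqueness of obstacle solutions the sequence $\{u_j\}$ is (q.e.) decreasing; the same comparison gives $u_j\ge u$ q.e. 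Hence $u_\infty:=\lim_j u_j$ exists q.e.\ and $u_\infty\ge u$ q.e.

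Next I would identify $u_\infty$ with $u$. Using $f_j\to f$ in $\Np(\Om)$ and the minimizing property of $u_j$, I would get a uniform bound on $\|g_{u_j}\|_{L^p(\Om)}$ and on $\|u_j-f_j\|_{\Np_0(\Om)}$; together with $f_j\to f$ this yields $\|u_j\|_{\Np(\Om)}$ bounded, so a subsequence of $\{u_j\}$ converges weakly in $\Np(\Om)$, necessarily to $u_\infty$. Lower semicontinuity of the $p$-energy under weak convergence, combined with $u_\infty-f\in\Np_0(\Om)$ and $u_\infty\ge f$ q.e.\ (the obstacle passes to the limit since the $f_j$ decrease to $f$), shows $u_\infty$ is admissible for the $\mathcal{K}_{f,f}(\Om)$-obstacle problem and has energy no larger than that of $u$; by uniqueness $u_\infty=u$ q.e. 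This gives the q.e.\ conclusion of the second statement. For the first statement, where the obstacle is $-\infty$, the same scheme applies but now $Hf_j$ and $Hf$ are $p$-harmonic, hence continuous after the standard modification, and the decreasing limit of $p$-harmonic functions that is locally bounded below is $p$-harmonic; identifying it with $Hf$ as above and invoking the Harnack convergence theorem (as in~\cite{KS}) promotes the monotone pointwise convergence to \emph{local uniform} convergence on $\Om$.

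The step I expect to be the main obstacle is the passage of the \emph{obstacle constraint} to the limit together with the identification $u_\infty=u$: one must be careful that ``$u_j\ge f_j$ a.e.'' and ``$f_j\downarrow f$ q.e.'' really give ``$u_\infty\ge f$ q.e.'', and that the weak limit in $\Np(\Om)$ agrees q.e.\ with the pointwise monotone limit (this uses that $\Np$-convergent sequences have a subsequence converging q.e., which holds under the doubling and Poincar\'e assumptions in force). Once these measure-theoretic bookkeeping points are handled, the energy-comparison and uniqueness arguments are routine, and the local uniform convergence in the $p$-harmonic case is exactly the content of the cited results of~\cite{Sh-conv} and~\cite{BBSPer}; accordingly I would simply cite those rather than reprove them.
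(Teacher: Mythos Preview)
The paper does not prove this proposition at all: it simply records, in the sentence immediately preceding the statement, that the first part is proved in~\cite{Sh-conv} and the second in~\cite[Proposition~5.5]{BBSPer}, and then moves on. Your concluding remark---that you would ``simply cite those rather than reprove them''---is therefore exactly the paper's approach, and nothing more is required here.

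Your sketch is broadly along the lines of what those references do, but since the paper treats the result as a black box, the sketch is surplus to requirements. One small caution if you ever do write it out: the admissible classes $\mathcal{K}_{f_{j+1},f_{j+1}}$ and $\mathcal{K}_{f_j,f_j}$ are \emph{not} nested (the boundary-value constraint changes with $j$), so the monotonicity $u_{j+1}\le u_j$ is not immediate from inclusion of admissible sets; it comes instead from the comparison principle for obstacle problems (e.g.\ testing with $\min\{u_j,u_{j+1}\}$ and $\max\{u_j,u_{j+1}\}$). Your phrase ``after adjusting by the zero-boundary difference'' gestures at this but would need to be made precise.
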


We now state the main theorem of this paper.

\begin{thm}\label{thm:main2}
Let $f:\OmPC\to\overline{R}$ be a $\CpP$-quasicontinuous function such that $f|_{\Om}$ is in $\Np(\Om)$. 
Then $f$ is resolutive and $P_{\overline{\Om}^P}f=Hf$.
\end{thm}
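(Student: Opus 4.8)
The plan is to reduce the prime-end Dirichlet problem to the already-understood Sobolev solution $Hf$ by sandwiching the Perron solutions between $Hf$ plus and minus small errors measured by the auxiliary functions $\psi_j$ from Lemma~\ref{lem:decsets}. First I would record that it suffices to prove the two inequalities $\itoverline{P}_{\overline{\Om}^P}f\le Hf$ and $\itunderline{P}_{\overline{\Om}^P}f\ge Hf$ on $\Om$, since the reverse comparison $\itunderline{P}_{\overline{\Om}^P}f\le\itoverline{P}_{\overline{\Om}^P}f$ is Corollary~\ref{cor:upper-lower}; moreover, by the symmetry $\itunderline{P}_{\overline{\Om}^P}f=-\itoverline{P}_{\overline{\Om}^P}(-f)$ and the fact that $-f$ satisfies the same hypotheses, only the upper inequality $\itoverline{P}_{\overline{\Om}^P}f\le Hf$ needs to be proved directly. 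Thus the whole problem becomes: exhibit, for each $\eps>0$, a \p-superharmonic function $w$ on $\Om$, bounded below, with $w\le Hf+(\text{something tending to }0)$ and $\liminf_{\Om\ni y\peto x}w(y)\ge f(x)$ for all $x\in\bdy_P\Om$.

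The construction of $w$ proceeds via quasicontinuity. Given $\eps>0$, use $\CpP$-quasicontinuity of $f$ to pick a relatively open $U\subset\overline{\Om}^P$ with $\CpP(U)$ as small as we like and $f|_{\overline{\Om}^P\setminus U}$ continuous; by passing to a decreasing sequence $U_k$ with $\CpP(U_k)<2^{-kp}$ and invoking Lemma~\ref{lem:decsets}, obtain the decreasing nonnegative $\psi_j\in\Np(\Om)$ with $\|\psi_j\|_{\Np(\Om)}<2^{-j}$, with $\psi_j\ge k-j$ on $U_k\cap\Om$ and $\lim_{\Om\ni y\peto x}\psi_j(y)\ge k-j$ for $x\in U_k\cap\bdy_P\Om$. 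Since $f|_\Om\in\Np(\Om)$, set $f_j:=f+\psi_j$ (truncating if needed so that the pieces stay in $\Np$), so that $f_j$ is a \p-quasieverywhere decreasing sequence converging to $f$ in $\Np(\Om)$; by Proposition~\ref{prop:obstseq}, $Hf_j\downarrow Hf$ locally uniformly. The candidate for membership in $\mathcal{U}_f(\overline{\Om}^P)$ is then $w_j:=Hf_j$ (or a solution to the corresponding $\mathcal{K}$-obstacle problem), the point being that on $U_k$ the large values of $\psi_j$ force the boundary $\liminf$ to exceed $f$, while off $U$ the continuity of $f$ on $\overline{\Om}^P\setminus U$ together with the fact that $f_j|_\Om\in\Np(\Om)$ lets one control the prime-end boundary behavior using the topology of $\overline{\Om}^P$ — here the basis $\{G,G^P\}$ from Remark~\ref{rem:pebasis} and the characterization of $\peto$-convergence are the tools. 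Then $\itoverline{P}_{\overline{\Om}^P}f\le w_j=Hf_j$ on $\Om$, and letting $j\to\infty$ gives $\itoverline{P}_{\overline{\Om}^P}f\le Hf$.

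For the matching lower bound one runs the symmetric argument with $-f$, or directly constructs subharmonic competitors $Hf-\psi_j$-type functions, concluding $\itunderline{P}_{\overline{\Om}^P}f\ge Hf$; combined with Corollary~\ref{cor:upper-lower} this pins down $\itoverline{P}_{\overline{\Om}^P}f=\itunderline{P}_{\overline{\Om}^P}f=Hf$, i.e.\ $f$ is resolutive with $P_{\overline{\Om}^P}f=Hf$. The main obstacle I anticipate is verifying that the competitor $Hf_j$ genuinely lies in $\mathcal{U}_f(\overline{\Om}^P)$: one must show $\liminf_{\Om\ni y\peto x}Hf_j(y)\ge f(x)$ for \emph{every} prime end $x$, including those whose impressions are large, non-singleton sets. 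On $U_k$ this is handled by the $\psi_j$ blow-up, but on $\bdy_P\Om\setminus U$ one needs that the $\Np(\Om)$-function $f_j$, which a priori has good behavior only at \p-a.e.\ point of the topological boundary $\bdy\Om$, actually controls the prime-end limit — this requires relating $\peto$-convergence to ordinary convergence in $X$ (as in the proof of Proposition~\ref{prop:cpqc}, via compactness of impressions and of $\overline{\Om}$) and using that the Sobolev solution $Hf_j$ attains its boundary data $f_j$ quasieverywhere on $\bdy\Om$ in the fine/Sobolev sense. Making this passage from the metric boundary to the prime-end boundary precise, without a continuous map $\Phi:\bdy_P\Om\to\bdy\Om$, is where the bulk of the work lies, and is exactly the point where Corollary~\ref{cor:main} (compact containment is the same in both topologies) and Theorem~\ref{main1} are needed to control connected components of the relevant open sets.
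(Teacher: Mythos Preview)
Your overall strategy matches the paper's: reduce to $\itoverline{P}_{\overline{\Om}^P}f\le Hf$ via Corollary~\ref{cor:upper-lower} and symmetry, invoke Lemma~\ref{lem:decsets} to produce correctors $\psi_j$, and pass to the limit with Proposition~\ref{prop:obstseq}. But there is a genuine gap at the step you yourself flag, and your proposed resolution of it does not work.

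With the candidate $w_j=Hf_j$, $f_j=f+\psi_j$, you cannot verify $w_j\in\mathcal{U}_f(\overline{\Om}^P)$. Even if $f_j\ge f(x)-\eps$ on a prime-end neighborhood $V_x\cap\Om$, this says nothing about $Hf_j$ there, since $Hf_j$ need not dominate $f_j$ pointwise in $\Om$. Your suggested route through the topological boundary (fine boundary values of $Hf_j$ on $\bdy\Om$, then Corollary~\ref{cor:main} and Theorem~\ref{main1}) does not close this: there is no mechanism linking the Sobolev trace of $f_j$ at points of an impression $I[x]\subset\bdy\Om$ to the value $f(x)$ at the prime end $x$, because the hypothesis on $f$ is $\CpP$-quasicontinuity on $\overline{\Om}^P$, not any trace statement on $\bdy\Om$. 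If instead you switch to the $\mathcal{K}_{f_j,f_j}$-obstacle solution $\phi_j$ (your parenthetical alternative), the pointwise bound $\phi_j\ge f_j$ holds, but Proposition~\ref{prop:obstseq} then gives $\phi_j\downarrow u$ where $u$ solves $\mathcal{K}_{f,f}$, and in general $u\gneq Hf$; you would only obtain $\itoverline{P}_{\overline{\Om}^P}f\le u$, not $\le Hf$.

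The paper combines both ideas with one additional twist: set $f_j=Hf+\psi_j$ (not $f+\psi_j$) and let $\phi_j$ solve $\mathcal{K}_{f_j,f_j}$. The obstacle constraint gives $\phi_j\ge Hf+\psi_j$ pointwise in $\Om$, and since $Hf$ itself solves $\mathcal{K}_{Hf,Hf}$, Proposition~\ref{prop:obstseq} now yields $\phi_j\downarrow Hf$ q.e. The remaining task, showing $\liminf_{y\peto x}\phi_j(y)\ge f(x)$, reduces to knowing that $Hf$, extended to $\bdy_P\Om$ by the values of $f$, is $\CpP$-quasicontinuous on $\overline{\Om}^P$. That is precisely what Proposition~\ref{prop:cpqc} provides: $f-Hf\in\Np_0(\Om)$, so its zero extension is $\CpP$-quasicontinuous, hence so is the extended $Hf=f-(f-Hf)$. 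Outside the bad set $U_{m+j}$ one then has $Hf$ continuous on $\overline{\Om}^P$, giving $\phi_j\ge Hf\ge f(x)-\eps$ on a prime-end neighborhood of $x$; inside $U_{m+j}$ the $\psi_j$ blow-up handles things as you describe. Corollary~\ref{cor:main} and Theorem~\ref{main1} are not used in this construction at all --- their role was already discharged in proving the comparison principle underlying Corollary~\ref{cor:upper-lower}.
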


Having overcome the drawback from the fact that $\partial_P\Om$ may not be compact with the help of 
Proposition~\ref{prop:comp}, the proof of the above main theorem is very similar to that of~\cite[Theorem~7.4]{BBS}.
However, one difference still remains-namely the topology of  $\overline{\Om}^P$ near the boundary $\partial_P\Om$,
which is not as simple as that of the Mazurkiewicz boundary. Hence we provide most of the details of the proof of
Theorem~\ref{thm:main2} here.

\begin{proof}
First, we assume that $f\geq 0$. We extend $Hf$ to $\OmPC$ by letting $Hf=f$ on $\bdy_P\Om$. We now show that this extension is $\CpP$-quasicontinuous.

Let $h=f-Hf$. Then $h\in\Np_0(\Om)$ is quasicontinuous on $\Om$ with $\CpP$-quasicontinuous extension 
$h=0$ to $\bdy_P\Om$, see Proposition~\ref{prop:cpqc}. Because $f$ is $\CpP$-quasicontinuous on 
$\overline{\Om}^P$, it now follows that
so is $Hf$. 

Pick open sets $\{G_j\}$ in $\OmPC$ with $\CpP(G_j)<2^{-jp}$ such that $Hf|_{\OmPC\sm G_j}$ is continuous. 
Defining $U_k=\ds\bigcup_{j=k+1}^\infty G_j$, we see that $\CpP(U_k)<2^{-kp}$ and 
$Hf|_{\OmPC\sm U_k}$ is still continuous.

These sets $\{U_k\}$ fulfill the conditions of Lemma \ref{lem:decsets}, and so we may take 
functions $\{\psi_j\}$ as described in that Lemma. We set $f_j=Hf+\psi_j$ (note 
here that $f_j$ is a function on $\Om$ alone) and let $\phi_j$ be the lower semicontinuously 
regularized solution of the $\mathcal{K}_{f_j,f_j}(\Om)$-obstacle problem as given
in Definition~\ref{def-obst}.

For each positive integer $m$ we have that
\[
f_j\geq\psi_j\geq m\text{ on $U_{m+j}\cap\Om$}.
\]

Given $\eps>0$, let $x\in\bdy_P\Om$. If $x\not\in U_{m+j}$, by the continuity of 
$Hf|_{\OmPC\sm U_{m+j}}$, there is a neighborhood $V_x$ of $x$ in $\OmPC$ such that
\[
f_j(y)\geq Hf(y)\geq Hf(x)-\eps=f(x)-\eps\text{ for all $y\in(V_x\cap\Om)\sm U_{m+j}$}.
\]
So, if $x\in\bdy_P\Om\sm U_{m+j}$,
\[
f_j\geq\min\{f(x)-\eps,m\}\text{ in }V_x\cap\Om^P.
\]
If, instead, $x\in U_{m+j}$, we take $V_x= U_{m+j}$. 

Now, as in the proof of~\cite[Theorem~7.4]{BBS} we have that
$\phi_j(y)\geq\min\{f(x)-\eps,m\}\text{ for all }y\in V_x\cap\Om$.
Therefore,
\[
\liminf_{\Om\ni y\peto x} \phi_j(y)\geq\min\{f(x)-\eps,m\}.
\]
As $\eps\to0$ and $m\to\infty$, we have that
\[
\liminf_{\Om\ni y\peto x}\phi_j(y)\geq f(x)\text{ for all }x\in\bdy_p\Om.
\]
Since $\phi_j$ is \p-superharmonic, we have that $\phi_j\in\mathcal{U}_f(\overline{\Om}^P)$, and so 
$\phi_j\geq\itoverline{P}_{\overline{\Om}^P}f$. 
Because $Hf$ is the solution to the $\mathcal{K}_{Hf,Hf}(\Om)$-obstacle problem, by 
Proposition~\ref{prop:obstseq} we know that $\phi_j$ decreases quasieverywhere to $Hf$, that is, 
$\itoverline{P}_{\overline{\Om}^P}f\leq Hf$ q.e. in $\Om$ when $f\ge 0$.

Note that if $f\in N^{1,p}(\Om)$ has a $\CpP$-quasicontinuous extension to $\overline{\Om}^P$, then
so does $\max\{f,m\}$ for each integer $m$.
Therefore, for $f\in\Np(\Om)$, not necessarily non-negative,
\[
\itoverline{P}_{\overline{\Om}^P}f\leq\lim_{m\to-\infty}\itoverline{P}_{\overline{\Om}^P}\max\{f,m\}
  \le\lim_{m\to\infty}H\max\{f,m\}=Hf\ \text{ q.e.~in } \Om.
\]
Because $\itoverline{P}_{\overline{\Om}^P}f$ is $p$-harmonic in $\Om$ (the proof of this fact,
for the Dirichlet problem corresponding to the metric boundary, can be found in~\cite[Section~4]{BBSPer};
the proof there goes through in our setting verbatim since the modifications considered there in the proof 
occur only on relatively compact subsets of $\Om$ itself)
and hence is continuous, we have that both 
$\itoverline{P}_{\overline{\Om}^P}f$ and $Hf$ are continuous. Therefore 
$\itoverline{P}_{\overline{\Om}^P}f\leq Hf$ everywhere in $\Om$.

Finally, with the aid of Proposition~\ref{prop:comp}, or more precisely, with the help of 
Corollary~\ref{cor:upper-lower}, we see that
\[
\itunderline{P}_{\overline{\Om}^P}f=-\itoverline{P}_{\overline{\Om}^P}(-f)\geq-H(-f)
=Hf\geq\itoverline{P}_{\overline{\Om}^P}f\geq\itunderline{P}_{\overline{\Om}^P}f.
\]
Thus $Hf=\itunderline{P}_{\overline{\Om}^P}f=\itoverline{P}_{\overline{\Om}^P}f$ and $f$ is resolutive.
\end{proof}

The following results show that solution $P_{\overline{\Om}^P}f$ is stable under perturbation of
$f$ on a set of $\CpP$ capacity zero. 
Some of these results have analogous results for the Mazurkiewicz boundary in~\cite{BBS}, but the conclusions found
there are stronger in general, due to the fact that the hypothesis that the Mazurkiewicz boundary (which is the same
as $\partial_{SP}\Om$) is compact is much stronger than the assumptions in our paper.
For example, in the setting of~\cite{BBS} \emph{all} continuous functions are resolutive, whereas
in our setting these continuous functions are in addition required to be Lipschitz on $\partial_{SP}\Om$.
The results here are not duplicates of those in~\cite{BBS} because the
domains considered here do not in general have $\partial_{SP}\Om$ compact, and so the results found in
our paper are valid for a larger class of domains than the results of~\cite{BBS}, including the setting of~\cite{BBS}.

\begin{prop}\label{prop:NpQcont}
Let $f:\OmPC\to\overline{\R}$ be a $\CpP$-quasicontinuous function with $f|_{\Om}$ in the class $\Np(\Om)$. 
If $h:\bdy_P\Om\to\overline{\R}$ is zero $\CpP$ quasi-everywhere, then $f+h$ is resolutive with respect to 
$\overline{\Om}^P$, and $P_{\overline{\Om}^P}(f+h)=P_{\overline{\Om}^P}(f)$.
\end{prop}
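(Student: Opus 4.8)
The plan is to reduce the statement to Theorem~\ref{thm:main2} together with a "negligibility of $\CpP$-capacity-zero sets" argument, modeled on the classical scheme for the metric and Mazurkiewicz boundaries. First I would reduce to the case $h \ge 0$: write $h = h_+ - h_-$, and since both $h_+$ and $h_-$ are again zero $\CpP$-quasi-everywhere on $\bdy_P\Om$, it suffices to handle a single nonnegative $h$; moreover, by considering $f+h$ and $f$ one may further reduce to showing $P_{\overline{\Om}^P}(f+h) \le P_{\overline{\Om}^P}f$ and the reverse, the reverse being the cheaper inequality since $h\ge 0$ gives $\mathcal{U}_{f+h}(\overline{\Om}^P)\supset$ nothing automatically—so the real content is the inequality $\itoverline{P}_{\overline{\Om}^P}(f+h) \le \itoverline{P}_{\overline{\Om}^P}f$. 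Let $Z:=\{x\in\bdy_P\Om : h(x)\neq 0\}$, so $\CpP(Z)=0$. By outer regularity of $\CpP$ (Proposition~\ref{prop:cppout}) choose, for each $j$, a relatively open set $G_j\subset\overline{\Om}^P$ with $Z\subset G_j$ and $\CpP(G_j)<2^{-jp}$; setting $U_k=\bigcup_{j>k}G_j$ we get a decreasing sequence of relatively open sets with $\CpP(U_k)<2^{-kp}$ and $Z\subset\bigcap_k U_k$.

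Next I would invoke Lemma~\ref{lem:decsets} to produce the decreasing family $\{\psi_j\}$ on $\Om$ with $\|\psi_j\|_{\Np(\Om)}<2^{-j}$, with $\psi_j\ge k-j$ on $U_k\cap\Om$, and $\liminf_{\Om\ni y\peto x}\psi_j(y)\ge k-j$ for $x\in U_k\cap\bdy_P\Om$. The key point is that, on $\bdy_P\Om\setminus U_{m+j}$ (which contains none of $Z$ once the index is large), $f+h$ agrees with $f$; while on $U_{m+j}$ the function $\psi_j$ is at least $m$ and dominates any bounded modification. So given a competitor $u\in\mathcal{U}_f(\overline{\Om}^P)$ (after truncating $f$ from above at level $m$, exactly as in the proof of Theorem~\ref{thm:main2}, to make the arguments work for possibly unbounded $f$), the function $u+\psi_j$ is \p-superharmonic, bounded below, and satisfies the boundary inequality for $\min\{f+h,m\}$ at every prime end: at prime ends outside $U_{m+j}$ because $u$ already does the job for $f=f+h$ there, and at prime ends in $U_{m+j}$ because $\psi_j\ge m$. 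Hence $u+\psi_j\in\mathcal{U}_{\min\{f+h,m\}}(\overline{\Om}^P)$, giving $\itoverline{P}_{\overline{\Om}^P}\min\{f+h,m\}\le u+\psi_j$ on $\Om$; taking the infimum over $u$ and then letting $j\to\infty$ (so $\psi_j\to 0$ in $\Np(\Om)$, hence locally uniformly along a subsequence, or pointwise q.e.) yields $\itoverline{P}_{\overline{\Om}^P}\min\{f+h,m\}\le\itoverline{P}_{\overline{\Om}^P}\min\{f,m\}$ q.e., hence everywhere by continuity of upper Perron solutions. Letting $m\to\infty$ through the monotone limits as in Theorem~\ref{thm:main2} gives $\itoverline{P}_{\overline{\Om}^P}(f+h)\le\itoverline{P}_{\overline{\Om}^P}f$.

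Applying the same argument with $f$ and $f+h$ interchanged—using that $f=(f+h)+(-h)$ and $-h$ is also zero $\CpP$-q.e.—gives the reverse inequality, so $\itoverline{P}_{\overline{\Om}^P}(f+h)=\itoverline{P}_{\overline{\Om}^P}f$; passing to lower Perron solutions via $\itunderline{P}_{\overline{\Om}^P}g=-\itoverline{P}_{\overline{\Om}^P}(-g)$ and applying the same reasoning to $-f$, $-(f+h)$ gives $\itunderline{P}_{\overline{\Om}^P}(f+h)=\itunderline{P}_{\overline{\Om}^P}f$. Since $f$ is resolutive by Theorem~\ref{thm:main2}, the two Perron solutions of $f+h$ coincide, so $f+h$ is resolutive and $P_{\overline{\Om}^P}(f+h)=P_{\overline{\Om}^P}f=Hf$. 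The main obstacle I anticipate is the verification that $u+\psi_j$ genuinely lies in $\mathcal{U}_{\min\{f+h,m\}}(\overline{\Om}^P)$ at prime ends lying in $U_{m+j}$: one must use the sequential/basis description of the prime end topology (Remark~\ref{rem:pebasis}, Lemma~\ref{lem:nbhd-bdy}) to see that a sequence $\Om\ni y_i\peto x$ with $x\in U_{m+j}\cap\bdy_P\Om$ eventually enters $U_{m+j}\cap\Om$, where $\psi_j\ge m$; this is exactly the spot where the prime end topology is less transparent than the Mazurkiewicz one, but Lemma~\ref{lem:nbhd-bdy} (or the neighborhood basis $G^P$) supplies precisely what is needed. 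The bookkeeping with the truncations at level $m$ and the limits $j\to\infty$, $m\to\infty$ is routine and parallels Theorem~\ref{thm:main2}.
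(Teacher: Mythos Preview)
Your approach has a genuine gap: the claim that $u+\psi_j$ is \p-superharmonic is unjustified and, in general, false. Lemma~\ref{lem:decsets} only produces functions $\psi_j\in N^{1,p}(\Om)$ with the stated size and $\liminf$ properties; they are built from capacity test functions, not from obstacle-problem solutions, and carry no superharmonicity. Moreover, for $p\neq 2$ the \p-Laplacian is nonlinear, so even a sum of two \p-superharmonic functions need not be \p-superharmonic. Repairing this would force you to pass $\psi_j$ through an obstacle problem to obtain a \p-superharmonic majorant---but that is exactly the machinery already packaged into Theorem~\ref{thm:main2}, so you would be reproving that theorem rather than invoking it. There is also a secondary problem with your limit ``$m\to\infty$ as in Theorem~\ref{thm:main2}'': that proof truncates from \emph{below} via $\max\{f,m\}$ with $m\to-\infty$, using $\itoverline{P}_{\overline{\Om}^P}f\le\itoverline{P}_{\overline{\Om}^P}\max\{f,m\}$; your truncation $\min\{f+h,m\}$ from above gives only $\itoverline{P}_{\overline{\Om}^P}\min\{f+h,m\}\le\itoverline{P}_{\overline{\Om}^P}(f+h)$, and monotone convergence of upper Perron solutions under \emph{increasing} boundary data is not automatic.

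The paper's argument avoids all of this with a single observation. Extend $h$ by zero into $\Om$. Since $\{h\neq0\}\subset\bdy_P\Om$ has $\CpP$-capacity zero and $\CpP$ is an outer capacity (Proposition~\ref{prop:cppout}), every $\eps>0$ admits an open $U\subset\OmPC$ with $\CpP(U)<\eps$ and $h\equiv 0$ on $\OmPC\setminus U$; hence the extended $h$ is $\CpP$-quasicontinuous on $\OmPC$, and so is $f+h$. Because $(f+h)|_\Om=f|_\Om\in N^{1,p}(\Om)$, Theorem~\ref{thm:main2} applies directly to $f+h$, giving resolutivity and $P_{\overline{\Om}^P}(f+h)=H(f+h)=Hf=P_{\overline{\Om}^P}f$, the middle equality because $f+h=f$ on $\Om$.
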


\begin{proof}
We may extend $h$ into $\Om$ by zero, and clearly $h|_{\Om}\in\Np(\Om)$. Note that, since 
$\CpP$ is an outer capacity (see Lemma \ref{prop:cppout}), this extended function $h$ is 
$\CpP$-quasicontinuous. 
Thus $f+h$ is $\CpP$-quasicontinuous. Finally, $(f+h)|_{\Om}\in\Np(\Om)$, so by using 
Theorem~\ref{thm:main2}, $f+h$ is resolutive and $P_{\overline{\Om}^P}(f+h)=H(f+h)$. Since 
$f=f+h$ in $\Om$, we therefore have $Hf=H(f+h)$. Thus, by Theorem~\ref{thm:main2} again,
\[
P_{\overline{\Om}^P}(f+h)=H(f+h)=Hf=P_{\overline{\Om}^P}f.
\]
\end{proof}

\begin{cor}\label{cor-ModZeroCap}
Let $f:\OmPC\to\overline{\R}$ be a bounded $\CpP$-quasicontinuous function with 
$f|_{\Om}\in\Np(\Om)$ and $u$ be a bounded \p-harmonic function in $\Om$. 
If $E\subset\bdy_P\Om$ such that $\CpP(E)=0$ and, for all $x\in\bdy_P\Om\sm E$,
\[
\lim_{\Om\ni y\peto x}u(y)=f(x),
\]
then $u=P_{\overline{\Om}^P}f$.
\end{cor}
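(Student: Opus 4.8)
The plan is to reduce, via Theorem~\ref{thm:main2}, to proving $u=Hf$ in $\Om$. Indeed, that theorem says $f$ is resolutive with $P_{\overline{\Om}^P}f=Hf$, so the corollary is equivalent to $u=Hf$. Adding a suitable constant to both $u$ and $f$ (this shifts $Hf$, $P_{\overline{\Om}^P}f$ and all the hypotheses by the same amount) I may and do assume $f\ge 0$, hence $Hf\ge 0$. I will establish $u\le Hf$ in $\Om$; applying this same conclusion to $-u$ and $-f$, which satisfy exactly the same hypotheses and for which $H(-f)=-Hf$, then yields $u\ge Hf$ and completes the proof.

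To prove $u\le Hf$ I would rerun the argument from the proof of Theorem~\ref{thm:main2}, but enlarge the exceptional sets used there so that they also absorb $E$. Recall from that proof that $Hf$, extended to $\overline{\Om}^P$ by $Hf=f$ on $\bdy_P\Om$, is $\CpP$-quasicontinuous. Since $\CpP(E)=0$ and $\CpP$ is an outer capacity (Proposition~\ref{prop:cppout}), for each $j$ I pick a relatively open set $\tilde G_j\subset\overline{\Om}^P$ with $E\subset\tilde G_j$, with $\CpP(\tilde G_j)$ as small as needed, and with $Hf|_{\overline{\Om}^P\setminus\tilde G_j}$ continuous; setting $U_k=\bigcup_{j>k}\tilde G_j$ gives a decreasing sequence of relatively open sets with $\CpP(U_k)<2^{-kp}$, with $Hf|_{\overline{\Om}^P\setminus U_k}$ continuous, and, crucially, with $E\subset U_k$ for every $k$. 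Feeding $\{U_k\}_k$ into Lemma~\ref{lem:decsets} produces functions $\psi_j\ge 0$ with $\|\psi_j\|_{\Np(\Om)}<2^{-j}$; put $f_j=Hf+\psi_j$ and let $\phi_j$ be the lower semicontinuously regularized solution of the $\mathcal{K}_{f_j,f_j}(\Om)$-obstacle problem, which is $p$-superharmonic and satisfies $\phi_j\ge\psi_j\ge 0$.

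The computation in the proof of Theorem~\ref{thm:main2} then gives $\liminf_{\Om\ni y\peto x}\phi_j(y)\ge f(x)$ for every $x\in\bdy_P\Om$. The new point is that, since $E\subset U_m$ for all $m$, Lemma~\ref{lem:decsets} forces $\lim_{\Om\ni y\peto x}\psi_j(y)=+\infty$, and hence $\liminf_{\Om\ni y\peto x}\phi_j(y)=+\infty$, for every $x\in E$. Therefore $\limsup_{\Om\ni y\peto x}u(y)\le\liminf_{\Om\ni y\peto x}\phi_j(y)$ for all $x\in\bdy_P\Om$: for $x\notin E$ the left side equals $f(x)$ by hypothesis, and for $x\in E$ the right side is $+\infty$ while the left side is finite since $u$ is bounded. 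As $u$ is $p$-subharmonic and $\phi_j$ is $p$-superharmonic, the comparison principle Proposition~\ref{prop:comp} yields $u\le\phi_j$ in $\Om$. Finally $f_j=Hf+\psi_j\to Hf$ in $\Np(\Om)$ because $\|\psi_j\|_{\Np(\Om)}<2^{-j}$, so Proposition~\ref{prop:obstseq} gives $\phi_j\searrow Hf$ q.e.\ in $\Om$; hence $u\le Hf$ q.e.\ in $\Om$, and since $u$ and $Hf$ are continuous on $\Om$, $u\le Hf$ everywhere on $\Om$.

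The main obstacle is the one flagged above: the hypothesis gives no control whatsoever on the boundary behaviour of $u$ along the exceptional set $E$, so the competing $p$-superharmonic functions must be arranged to blow up along $E$ — which is precisely why $E$ has to be built into the sets $U_k$ of Lemma~\ref{lem:decsets}, rather than only using the exceptional sets coming from the quasicontinuity of $Hf$. Everything else is a direct appeal to Theorem~\ref{thm:main2} and Propositions~\ref{prop:comp}, \ref{prop:obstseq}, \ref{prop:cppout} together with Lemma~\ref{lem:decsets}, plus routine bookkeeping with constants and with the passage from ``q.e.'' to ``everywhere'' for continuous functions.
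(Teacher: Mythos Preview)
Your argument is correct, but it takes a genuinely different route from the paper's. The paper gives a three-line proof that never reopens the machinery of Theorem~\ref{thm:main2}: after rescaling so that $0\le f,u\le 1$, one simply observes that $u\in\mathcal{U}_{f-\chi_E}(\overline{\Om}^P)$ and $u\in\mathcal{L}_{f+\chi_E}(\overline{\Om}^P)$ (the boundedness of $u$ handles points of $E$, the limit hypothesis handles the rest), and then Proposition~\ref{prop:NpQcont} applied to $h=\pm\chi_E$ gives
\[
u\le\itunderline{P}_{\overline{\Om}^P}(f+\chi_E)=P_{\overline{\Om}^P}f=\itoverline{P}_{\overline{\Om}^P}(f-\chi_E)\le u.
\]
Your approach instead bypasses Proposition~\ref{prop:NpQcont} entirely and reruns the obstacle-problem construction from Theorem~\ref{thm:main2} with $E$ absorbed into the sets $U_k$, so that the competitors $\phi_j$ blow up along $E$ and the comparison principle can be invoked directly against $u$. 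This is longer and duplicates work already packaged in Proposition~\ref{prop:NpQcont}, but it is more self-contained and makes transparent exactly where the zero-capacity of $E$ enters (via Lemma~\ref{lem:decsets}). The paper's route is the cleaner one once Proposition~\ref{prop:NpQcont} is on the shelf; yours would be the natural choice if that proposition were not already available.
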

\begin{proof}
Since both $f$ and $u$ are bounded, we may (simultaneously) rescale them such that 
$0\leq f,u\leq 1$. Then we know that $u\in\mathcal{U}_{f-\chi_E}(\overline{\Om}^P)$ 
and $u\in\mathcal{L}_{f+\chi_E}(\overline{\Om}^P)$. Thus, by the preceding proposition,
\[
u\leq\itunderline{P}_{\overline{\Om}^P}(f+\chi_E)=P_{\overline{\Om}^P}f
  =\itoverline{P}_{\overline{\Om}^P}(f-\chi_E)\leq u.
\]
\end{proof}


Finally, as an application of the above resolutivity results, 
we discuss issues of resolutivity of continuous functions on $\bdy_P\Om$. Note that by
the results in~\cite{BBSPer}, continous functions on $\bdy \Om$ are resolutive. However, in the setting
of $\bdy_P\Om$ we are unable to get such a general result. However, we are able to get resolutivity for
certain types of continuous functions on $\bdy_P\Om$.
This is the focus of the the remaining part of this paper.

Recall that $\bdy_{SP}\Om$ denotes the collection of all prime ends whose impression contains only one point.
As discussed in Section~2 (see also~\cite{ABBS}), this set is equipped with a metric $d_M$, the extension of
the Mazurkiewicz metric on $\Om$.

\begin{prop}\label{ctsres}
Let $f:\bdy_P\Om\to\R$ be continuous on $\bdy_P\Om$ and $d_M$-Lipschitz continuous on $\bdy_{SP}\Om$. 
Then $f$ is resolutive. Furthermore, if $h:\bdy_P\Om\to\overline{\R}$ is zero $\CpP$ quasi-everywhere, 
then $f+h$ is resolutive with respect to  $\overline{\Om}^P$, and $P_{\overline{\Om}^P}(f+h)=P_{\overline{\Om}^P}(f)$.
\end{prop}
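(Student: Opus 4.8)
The plan is to reduce Proposition~\ref{ctsres} to Theorem~\ref{thm:main2} by constructing, from a given $f$ that is continuous on $\bdy_P\Om$ and $d_M$-Lipschitz on $\bdy_{SP}\Om$, a function $F:\OmPC\to\overline{\R}$ that is $\CpP$-quasicontinuous with $F|_\Om\in\Np(\Om)$ and that agrees with $f$ on $\bdy_P\Om$ up to a set of $\CpP$-capacity zero; once this is done, Theorem~\ref{thm:main2} gives resolutivity of $F$, and then Proposition~\ref{prop:NpQcont} (applied to $F$ and to the difference $f-F$, which is zero $\CpP$-q.e.\ on $\bdy_P\Om$) gives resolutivity of $f$ and of $f+h$ together with $P_{\overline{\Om}^P}(f+h)=P_{\overline{\Om}^P}(f)$. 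So the second sentence of the proposition is a formal consequence of the first sentence together with Proposition~\ref{prop:NpQcont}; the real content is producing $F$.

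To build $F$, first I would use the $d_M$-Lipschitz hypothesis on $\bdy_{SP}\Om$ together with the homeomorphism $\Phi:\Om\cup\bdy_{SP}\Om\to\overline{\Om}^M$ of the Remark after Lemma~\ref{lem:single}: via $\Phi$, the restriction $f|_{\bdy_{SP}\Om}$ is a Lipschitz function on $\bdy_M\Om\subset\overline{\Om}^M$, and since $d\le d_M$ on $\Om$ and the Mazurkiewicz completion sits over $\overline{\Om}$, a McShane-type extension produces a function $\tilde f$ on $\overline{\Om}^M$ that is $d_M$-Lipschitz, hence in particular bounded (as $\bdy_P\Om$-continuous functions on the relevant compact impressions, and $f$ continuous on $\bdy_P\Om$, force boundedness — here one uses that $\Om$ is bounded). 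Pulling back along $\Phi$ and extending into $\Om$, one gets $F_0\in\Lip(\Om\cup\bdy_{SP}\Om)$ with respect to $d_M$, and a $d_M$-Lipschitz function on $\Om$ has an upper gradient controlled by its Lipschitz constant (since $d\le d_M$), so $F_0|_\Om\in N^{1,p}(\Om)$ because $\Om$ is bounded and $\mu$ is finite on bounded sets. The extension $F_0=f$ on $\bdy_{SP}\Om$ is, by the metrizability of $\Om\cup\bdy_{SP}\Om$ via $d_M$ and continuity of $F_0$ there, automatically continuous on $\Om\cup\bdy_{SP}\Om$ in the prime end topology.

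The delicate point — and the main obstacle — is extending $F_0$ to all of $\bdy_P\Om$, not just $\bdy_{SP}\Om$, in a way compatible with the prime end topology on $\OmPC$ and with $f$, modulo $\CpP$-null sets. Here I would invoke Theorem~\ref{spdense}: $\bdy_{SP}\Om$ is dense in $\bdy_P\Om$. For a prime end $[\{E_k\}_k]\in\bdy_P\Om\sm\bdy_{SP}\Om$, its impression is a nondegenerate compact connected subset of $\bdy\Om$; continuity of $f$ on $\bdy_P\Om$ together with the density should force $f$ to have a well-defined limit along $\bdy_{SP}\Om$-approximations of $[\{E_k\}_k]$, equal to $f([\{E_k\}_k])$, so defining $F=F_0$ on $\Om\cup\bdy_{SP}\Om$ and $F=f$ on the rest of $\bdy_P\Om$ gives a function on $\OmPC$ that agrees with $f$ on all of $\bdy_P\Om$. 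The issue is whether $F$ is $\CpP$-quasicontinuous: continuity on the closed set $\OmPC\sm(\text{small capacity set})$ has to be checked both for sequences $\Om\ni y_i\peto x$ with $x\in\bdy_P\Om$ and for sequences of prime ends $x_i\peto x$. The first type is handled by the construction of $F_0$ near $\bdy_{SP}\Om$ and by the admissibility argument from the proof of Proposition~\ref{prop:cppout} (removing a small $\CpP$-open set where $F_0|_\Om$ fails to be continuous in the metric sense, using quasicontinuity of Newtonian functions); the second type is where one must use the density of $\bdy_{SP}\Om$ and the uniform control coming from the $d_M$-Lipschitz bound to pass continuity from $\bdy_{SP}\Om$ to limit prime ends. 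Once $F$ is shown $\CpP$-quasicontinuous with $F|_\Om=F_0|_\Om\in\Np(\Om)$, Theorem~\ref{thm:main2} applies to $F$, and since $f-F$ vanishes on all of $\bdy_P\Om$ (in particular $\CpP$-q.e.), Proposition~\ref{prop:NpQcont} finishes both assertions of the proposition.
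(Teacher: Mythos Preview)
Your overall strategy matches the paper's: McShane-extend $f|_{\bdy_{SP}\Om}$ to a $d_M$-Lipschitz function $F$ on $\Om\cup\bdy_{SP}\Om$, set $F=f$ on $\bdy_P\Om\setminus\bdy_{SP}\Om$, check that $F|_\Om\in\Np(\Om)$, establish the needed continuity of $F$ on $\OmPC$, and then invoke Theorem~\ref{thm:main2} and Proposition~\ref{prop:NpQcont}. The reduction of the second sentence to the first via Proposition~\ref{prop:NpQcont} is exactly right.

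However, there is a genuine gap at the point you yourself flag as ``delicate'': the continuity of $F$ at a non-singleton prime end $[\{E_k\}_k]$ along sequences $\{x_n\}\subset\Om$ with $x_n\peto[\{E_k\}_k]$. Your proposed mechanism --- quasicontinuity of $F_0|_\Om$ as a Newtonian function and removing a small-$\CpP$ open set as in Proposition~\ref{prop:cppout} --- does not touch this issue. Quasicontinuity of $F_0|_\Om$ only controls behavior within $\Om$ in the metric topology (and $F_0$ is already continuous there, being locally Lipschitz), whereas what is needed is to relate $F_0(x_n)$ to the value $f([\{E_k\}_k])$ at a prime end whose impression is a continuum. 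Nor can you simply remove $\bdy_P\Om\setminus\bdy_{SP}\Om$ as a $\CpP$-null set: whether $\CpP(\bdy_P\Om\setminus\bdy_{SP}\Om)=0$ in general is precisely one of the open problems posed at the end of the paper.

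The paper closes this gap with a bridging argument drawn from the proof of Theorem~\ref{spdense}. For each $x_n\in E_n$ one finds, via a geodesic from $x_n$ to a nearest boundary point, a singleton prime end $[\{F_k^n\}_k]\in\bdy_{SP}\Om$ with $d_M(x_n,[\{F_k^n\}_k])\le 1/n$ \emph{and} $[\{F_k^n\}_k]\peto[\{E_k\}_k]$. Then $|F(x_n)-F([\{F_k^n\}_k])|\le L/n$ by the $d_M$-Lipschitz bound on $\Om\cup\bdy_{SP}\Om$, while $F([\{F_k^n\}_k])=f([\{F_k^n\}_k])\to f([\{E_k\}_k])$ by continuity of $f$ on $\bdy_P\Om$; combining gives $F(x_n)\to F([\{E_k\}_k])$. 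This yields full continuity of $F$ on $\OmPC$ (not merely $\CpP$-quasicontinuity), so Theorem~\ref{thm:main2} applies directly. Your outline becomes a complete proof once this bridging step is inserted.
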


\begin{proof}
By an application of the McShane extension theorem (see~\cite{Hei}), we  
extend $f$ to a function $F:\OmPC\to\R$ such that $F=f$ on $\bdy_P\Om$ and $F$ is 
$d_M$-Lipschitz on $\Om\cup\bdy_{SP}\Om$. 

We now show that $F$ is continuous on $\OmPC$. By construction, $F\vert_{\Om\cup\bdy_{SP}\Om}$ is continuous. 
Since $F=f$ on $\bdy_P\Om$, we also see that $F\vert_{\bdy_P\Om}$ is also continuous.
It remains to show that given any end $[\{E_k\}_k]\in\bdy_P\Om\sm\bdy_{SP}\Om$ and a
sequence $\{x_n\}_n$
in $\Om$ with $x_n\peto[\{E_k\}_k]$, we have $F(x_n)\to F([\{E_k\}_k])$. 

At first, we will prove our result only for sequences $x_n\peto[\{E_k\}_k]$ such that, for each $n$, 
$x_n\in N(I[\{E_k\}_k],\frac{1}{n})$. In addition, we will fix a representative chain 
$\{E_k\}_k\in[\{E_k\}_k]$ such that, for all $n\geq k$, $x_n\in E_k$. 
Recall also that we assume $X$ to be a geodesic space.

By modifying the proof of Theorem~\ref{spdense}, we obtain a sequence $\{[\{F_k^n\}_k]\}_n$ in 
$\bdy_{SP}\Om$ such that $[\{F_k^n\}_k]\peto[\{E_k\}_k]$ and $d_M(x_n,[\{F_k^n\}_k])\leq\frac{1}{n}$.

Since $F$ is continuous on $\bdy_P\Om$, we know that $F([\{F_k^n\}_k])\to F([\{E_k\}_k])$. Given 
any $\eps$, we may pick a large-enough positive integer $N$ such that 
\[
|F([\{F_k^N\}_k])-F([\{E_k\}_k])|<\frac{\eps}{2}
\]and
\[
|F(x_N)-F([\{F_k^N\}_k])|\leq Ld_M(x_N,[\{F_k^N\}_k])\leq \frac{L}{N}\leq\frac{\eps}{2},
\] where $L$ is the $d_M$-Lipschitz constant for $F$ on $\Om\cup\bdy_{SP}\Om$. Then
\[
|F(x_N)-F([\{E_k\}_k])|\leq |F([\{F_k^N\}_k])-F([\{E_k\}_k])|+|F(x_N)-F([\{F_k^N\}_k])|\leq\eps.
\] Thus, $F(x_n)\to F([\{E_k\}_k])$.

Now, given any arbitrary sequence $\{x_n\}$ of points in $\Om$ such that $x_n\peto[\{E_k\}_k]$, 
consider $\{|F(x_n)-F([\{E_k\}_k])|\}_n$. Given any subsequence of $\{x_n\}$, we may pick a further 
subsequence $\{z_n\}$ such that $z_n\in N(I[\{E_k\}_k],\frac{1}{n})$. Therefore, 
$|F(z_n)-F([\{E_k\}_k])|\to0$, implying that $|F(x_n)-F([\{E_k\}_k])|\to0$, which completes the 
proof of continuity of $F$.

Now an application of the main theorem above yields the resolutivity of $F$, and hence
the resolutivity of $f$, completing the proof of the first part of the proposition.

The second part now follows from an application of Proposition~\ref{prop:NpQcont} to the function
$F$.
\end{proof}

\begin{rem}\label{rem:rem7.11}
Observe that in the above proposition, we can relax the condition of $f$ being continuous on
$\bdy_P\Om$ to $f$ being $\CpP$-quasicontinuous on $\bdy_P\Om$, the remaining (Lipschitz)
condition of  $f$ also holding. More precisely, if for each $\eps>0$ we can find an open
set $U_\eps\subset\overline{\Omega}^P$ with $\CpP(U_\eps)<\eps$ such that 
$f\vert_{[\bdy_P\Omega\setminus(U_\eps)]\cup\bdy_{SP}\Om}$ is continuous and
$f$ is $d_M$-Lipschitz continuous on $\bdy_{SP}\Om$, then $f$ is resolutive.
\end{rem}

\section{Some examples}
 
  The use of prime ends in the Perron method also yields new results about Euclidean domains.
For example, in the classical Dirichlet problem where the boundary considered is the topological
(that is, metric) boundary of the domain, it is well-known that if $f:\bdy\Om\to\R$ is continuous and
$E\subset\bdy\Om$ such that $C_p(E)=0$, then any bounded perturbation of $f$ on $E$ would yield
a resolutive function whose Perron solution agrees with the Perron solution of $f$ in $\Om$; see~\cite{HKM}
for a proof of this in the weighted Euclidean setting, and for more general metric measure spaces as considered
in this paper, see~\cite{BBSPer} for this fact. The
prime end boundary approach studied in this paper gives a larger set $E$ on which the value of the
boundary data would be irrelevant in the above sense of perturbation. 

The goal of this section is to give three such example domains in Euclidean setting. 


\begin{egs}
The first example we discuss in this section is that of the harmonic comb, also known as the 
topologist's comb. This example was extensively studied in~\cite{B}. This comb is a simply connected
planar domain given by
\[
  \Om:= (0,1)\times(0,1)\, \sm \bigcup_{n\in\mathbb{N}}\{1/n\}\times[0,1/2].
\]
It was shown in~\cite{B} that given a function on $\bdy \Om$,
continuous and bounded on $\bdy\Om\sm \{0\}\times[0,1/2)$, any perturbation of the function
on the set $E:=\{0\}\times[0,1/2)$ yields a resolutive function whose Perron solution coincides with the
Perron solution of the original function. Note that $C_p(E)>0$ for $p>1$, but $\CpP(P(E))=0$.
Note also that the prime end boundary in this case is the same as the singleton prime end boundary
$\partial_{SP}\Omega$. Hence the ``prime end-Perron solution" of any boundary data defined on
$\bdy\Om$ is independent of the values of the boundary data on $E$ as long as the boundary data
is Lipschitz (with respect to the Mazurkiewicz metric $d_M$) continuous on the part of the boundary 
of $\Om$ that arises as impressions of prime ends. On the other hand, if $f$ is a quasicontinuous 
function on $\overline{\Om}\sm \{0\}\times[0,1/2)$ (not necessarily bounded) such that 
$f\vert_\Om\in N^{1,p}(\Om)$, then $f\vert_{\partial\Om\sm\{0\}\times[0,1/2]}$ is resolutive, and any
perturbation of $f$ on a set $F\subset\partial\Om$ with $\CpP(P(F))=0$ yields the same Perron
solution. Hence the results obtained from the perspective of prime end boundaries are complementary
to the results in~\cite{B}. 
\end{egs}

In the above example none of the points in $E$ belongs to the impression of any prime end.
However, 
 $\CpP(P(E))$ does make sense. We point out here that the results of~\cite{B} are related to another
 type of Perron solution, namely, the Perron solution with respect to the topological boundary $\bdy\Om$.
 The difference between the two types of Perron solutions in this instance is that in the case of the 
 prime end boundary, the condition on the superharmonic functions is not enforced at any of the points in
 $E$. This is more in line with the behavior of functions in $N^{1,p}$-classes; boundary points that are not
 accessible via rectifiable curves from within the domain ought not to influence the Dirichlet problem for the 
 domain.

As a consequence of the results of the previous section (see Remark~\ref{rem:rem7.11}), 
if we know that $\CpP(\bdy_P\Om\setminus\bdy_{SP}\Om)=0$,
then any bounded function on $\bdy_P\Om$ that is Lipschitz continuous on $\bdy_{SP}\Om$ with respect to 
the Mazurkiewicz metric $d_M$ is resolutive, and any bounded perturbation of such a function on
$\bdy_P\Om\setminus\bdy_{SP}\Om$ yields a resolutive function whose Perron solution agrees with the 
Perron solution of the original function. This phenomenon is illustrated by the following two examples.

\begin{egs}
This example considers the so-called double comb:
\[
  \Om:= (0,1)\times(0,1)\sm\bigcup_{1<n\in\mathbb{N}}\{1/(2n)\}\times[0,1-1/n]\cup\{1/(2n+1)\}\times[1/n,1].
\]
This again is a simply connected planar domain, but now the set 
$E:=\{0\}\times[0,1]$ is the impression of a single prime end. Note that $\bdy_P\Om$ is compact in this example,
but $\bdy_{SP}\Om$ is not. It is again easy to see (by the use of functions $u_\eps:=\eps \dinn^\Om(x_0,\cdot)$
for each $\eps>0$ and for a fixed $x_0\in\Om$) that $\CpP(\bdy_P\Om\setminus\bdy_{SP}\Om)=0$,
although $C_p(P^{-1}(\bdy_P\Om\setminus\bdy_{SP}\Om))>0$. It follows that
any function on $\bdy_P\Om$ that is Lipschitz continuous on $\bdy_{SP}\Om$ (with respect to $d_M$) is resolutive, and 
any perturbation of this function on $E$ is also resolutive. Strictly speaking, each individual point in
$E$ does not form a separate prime end boundary; the entire set $E$ is the impression of a prime end.
Therefore, in the above statement, by ``perturbation on $E$" we mean perturbing the value of the function
by by changing its value to a different one on the entire set $E$. However, we can relax this "constant on $E$"
requirement in the following sense. Any function on
the topological boundary $\bdy\Om$ that is Lipschitz continuous on $\bdy\Om\setminus E$ is resolutive, and 
perturbations of such functions on $E$ would yield the same Perron solution. A similar statement holds for
functions on $\overline{\Om}$ that are quasicontinuous on $\overline{\Om}\sm E$ such that the restriction of the 
function to $\Om$ belongs to $N^{1,p}(\Om)$.
Such resolutivity result does not follow from the now-classical results in~\cite{BBSPer}. 
\end{egs}

In the above example we had only one element of the prime end boundary that did not belong to
$\bdy_{SP}\Om$. We now construct an example where the set $\bdy_P\Om\setminus\bdy_{SP}\Om$
is uncountable and satisfies $\CpP(\bdy_P\Om\setminus\bdy_{SP}\Om)=0$ while
$C_p(P^{-1}(\bdy_P\Om\setminus\bdy_{SP}\Om))>0$.

\begin{egs}
In this example we consider a domain in $\R^3$:
{\small
\[
   \Om:=(0,1)^3\sm \bigcup_{1<n\in\mathbb{N}}\{1/(2n)\}
      \times[0,3/4+1/n]\times[0,1-1/n]\cup\{1/(2n+1)\}\times[1/4-1/n,1]\times[1/n,1].
\]}
Clearly none of the points in $E:=\{0\}\times[0,1]^2$ is accessible from $\Om$, and it can be shown
using the same technique as in the previous example that
$\CpP(\bdy_P\Om\setminus\bdy_{SP}\Om)=0$, while $C_p(E)>0$. In this case, note that, for each
line segment $\gamma$ in the $2$-dimensional hyperplane region $E$ that connects the line
$\{0\}\times\{1/4\}\times[0,1]$ to the line $\{0\}\times\{3/4\}\times[0,1]$ and lies in between them,
there is a prime end in $\bdy_P\Om$
with that line as its impression. 
Such a prime end is obtained by considering acceptable sets $E_k=\bigcup_{x\in\gamma}B(x,1/k)\cap\Om$.
By the construction of $\Om$, it follows that $E_k$ is connected for each positive integer $k$.
It follows that 
$\bdy_P\Om\setminus\bdy_{SP}\Om$ is uncountable. 
\end{egs}

\begin{rem}
We conclude this section by posing the following two open problems: 
\begin{enumerate}
\item Are there bounded domains that
fail the assumption given in Definition~\ref{St-Assume}?
\item Are there bounded domains for which $\CpP(\bdy_P\Om\setminus\bdy_{SP}\Om)>0$?
\end{enumerate}
\end{rem}


\noindent Address:\\

\noindent D.E.: Department of Mathematical Sciences, P.O.Box 210025, University of
Cincinnati, Cincinnati, OH 45221--0025, U.S.A. \\
\noindent E-mail: {\tt estepdy@mail.uc.edu}, {\tt dewey.estep@uc.edu}
\\

\noindent N.S.: Department of Mathematical Sciences, P.O.Box 210025, University of
Cincinnati, Cincinnati, OH 45221--0025, U.S.A. \\
\noindent E-mail: {\tt shanmun@uc.edu} 

\end{document}